\newlength{\blank}
\newtheorem{thm}{Theorem}[section]
\newtheorem{prb}[thm]{Unsolved Problem}
\newtheorem{prp}[thm]{Proposition}
\newtheorem{cnj}[thm]{Conjecture}
\newtheorem{cor}[thm]{Corollary}
\newtheorem{lem}[thm]{Lemma}
\newtheorem{cas}{Case}
\newtheorem{sta}{\hspace{-\blank}}[thm] 
\newcommand{\qed}{\hfill $\Box$}
\newenvironment{proof}{\noindent\underline{Proof}:}{\hfill\qed\par\medskip}
\def\zz{\mathbb{Z}}
\newcommand{\ignore}[1]{}
\newcommand{\rmenum}
{
\renewcommand{\theenumi}{{\rm(\roman{enumi})}}
\renewcommand{\labelenumi}{\theenumi}
}
\newcommand{\Contra}[3]{#1/(#2\rightarrow#3)}
\begin{document}
  \title{On Two Unsolved Problems\\Concerning\\Matching Covered
    Graphs\\
    {\large{Dedicated to the memory of Professor W.T.Tutte}}\\
    {\large{on the occasion of the centennial of his birth}}}
  \author{
    Cl{\'a}udio L.~Lucchesi
    \and
    Marcelo~H.~de~Carvalho
    \and
    Nishad~Kothari
    \and
    U.~S.~R.~Murty
  }
  \date{26 May 2017}

  \maketitle
  \thispagestyle{empty}

  \begin{abstract}
    \noindent
    A cut $C:=\partial (X)$ of a  matching covered graph $G$ is a {\em
      separating  cut}   if  both   its  $C$-contractions   $G/X$  and
    $G/\overline{X}$ are also matching covered. A brick is {\em solid}
    if it is free of nontrivial separating cuts.
    We (Carvalho, Lucchesi and Murty) showed in \cite{clm04}
    that the  perfect matching  polytope of a  brick may  be described
    without  recourse to odd set  constraints if  and only  if it  is
    solid, and in  \cite{clm06} we proved that the  only simple planar
    solid bricks  are the  odd wheels.  The problem  of characterizing
    nonplanar solid bricks remains unsolved.

    \smallskip

    A {\em bi-subdivision} of a graph $J$ is a graph obtained from $J$
    by replacing each of its edges by paths of odd length.  A matching covered graph
    $J$ is a  {\em conformal minor} of a matching covered graph $G$  if there
    exists a bi-subdivision $H$ of $J$ which is a subgraph of $G$ such
    that $G-V(H)$ has a perfect matching. For a fixed matching covered
    graph $J$, a matching covered graph  $G$ is {\em $J$-based} if $J$
    is  a  conformal   minor  of  $G$  and,  otherwise,   $G$  is  {\em $J$-free}. 
    A  basic result due to  Lov\'asz~\cite{lova83} states
    that  every   nonbipartite  matching   covered  graph   is  either
    $K_4$-based   or   is   $\overline{C_6}$-based  or   both,   where
    $\overline{C_6}$ is the triangular  prism. In \cite{komu16}, we (Kothari and Murty)
    showed that, for any  cubic brick $J$,  a  matching covered
    graph $G$ is $J$-free if and only if each of its bricks is $J$-free.
    We also  found characterizations  of planar
    bricks    which   are    $K_4$-free    and    those   which    are
    $\overline{C_6}$-free. Each of these problems remains unsolved  in the
    nonplanar case.
    
    \smallskip

    In this  paper we  show that the  seemingly unrelated  problems of
    characterizing nonplanar solid bricks on  the one hand, and on the
    other of characterizing nonplanar $\overline{C_6}$-free bricks are
    essentially the same. We do this by establishing that a simple nonplanar brick,
    other  than the  Petersen graph,  is  solid if  and only  if it  is
    $\overline{C_6}$-free. 
    In order to prove  this, we first show that
    any nonsolid  brick has one  of the four  graphs $\overline{C_6}$,
    the  bicorn,  the tricorn  and  the  Petersen graph  (depicted  in
    Figure~\ref{fig:four-bricks}) as a conformal minor.  Then, using a
    powerful theorem  due to Norine and  Thomas~\cite{noth07}, we show
    that the bicorn, the tricorn  and the Petersen graph are dead-ends
    in  the  sense that  any  simple  nonplanar nonsolid  brick  which
    contains any one  of these three graphs as a
    proper conformal minor  also  contains $\overline{C_6}$ as a conformal minor.

  \end{abstract}
  \tableofcontents

  \section{Background and Preliminaries}

  \subsection{Matching Covered Graphs}
  \label{sec:mcg}
  For  graph  theoretical  terminology and  notation,  we  essentially
  follow  the  book  by  Bondy  and  Murty~\cite{bomu08}.  All  graphs
  considered in this paper are loopless.

  \smallskip

  For a  subset $S$ of the  vertex set of  a graph $G$, we  denote the
  number of  odd components of $G-S$  by $o(G-S)$. Tutte~\cite{tutt47}
  established the following fundamental theorem.

  \begin{thm}[{\sc Tutte's Theorem}]\label{thm:tutt}
    A  graph $G$  has  a perfect  matching if  and  only if  $$o(G-S) \leq |S|$$ for any subset $S$ of $V(G)$.
  \end{thm}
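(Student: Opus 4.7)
The plan is to prove the two directions separately. The forward direction is straightforward: given a perfect matching $M$ of $G$ and any $S\subseteq V(G)$, each odd component of $G-S$ must contain at least one vertex matched by $M$ to a vertex of $S$ (by a parity count within the component), and distinct odd components account for distinct vertices of $S$; hence $o(G-S)\leq |S|$.

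For the converse I would follow the classical extremal argument of Lov\'asz. Taking $S=\emptyset$ shows that $G$ has no odd components, so $|V(G)|$ is even. Suppose for contradiction that some graph on $V(G)$ satisfies Tutte's condition yet admits no perfect matching. Since adding edges to a graph can only decrease $o(G-S)$ for each $S$, the condition is preserved under edge addition, and I may pass to an edge-maximal such graph, still denoted $G$: adding any missing edge of $G$ creates a perfect matching. Let $S$ be the set of vertices of $G$ adjacent to every other vertex.

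The crux is the structural claim that every component of $G-S$ is a clique. Granting this, let $K_1,\dots,K_r$ be the odd components of $G-S$ and $K_{r+1},\dots,K_p$ the even ones. Tutte's condition applied to $S$ gives $r\leq |S|$, and the identity $|V(G)|=|S|+\sum_i|K_i|$ together with the parities of the summands shows $|S|-r$ is even. Since $S$ itself induces a complete graph in $G$, I can match one vertex of each odd $K_i$ into $S$, pair up the remaining $|S|-r$ vertices of $S$ arbitrarily within $S$, and then complete a perfect matching inside each of the (now even-order) residual cliques in the components of $G-S$. The resulting perfect matching of $G$ contradicts the choice of $G$.

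The main obstacle is the clique claim. Suppose some component of $G-S$ is not a clique; then there exist $a,b,c\in V(G)\setminus S$ with $ab,bc\in E(G)$ but $ac\notin E(G)$, and since $b\notin S$ there is a vertex $d$ with $bd\notin E(G)$. By maximality, $G+ac$ has a perfect matching $M_1\ni ac$ and $G+bd$ has a perfect matching $M_2\ni bd$. I would analyze $M_1\triangle M_2$, which is a disjoint union of alternating even cycles in $G+ac+bd$. If $ac$ and $bd$ lie in different cycles, swapping $M_1$ and $M_2$ along the cycle containing $bd$ produces a perfect matching of $G$ that uses neither $ac$ nor $bd$. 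If they lie on a common cycle $C$, I would exploit the triangle edges $ab$ and $bc$ (both present in $G$) to splice $C$ into two shorter even alternating segments which, together with one of these triangle edges, yield a perfect matching of $G$. The subcase analysis according to the relative cyclic positions of $a,b,c,d$ on $C$ is the delicate step; once it is handled, the perfect matching of $G$ so produced contradicts the assumption that $G$ has no perfect matching.
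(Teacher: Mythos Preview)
The paper does not actually prove Theorem~\ref{thm:tutt}; it merely states it as a classical result and attributes it to Tutte~\cite{tutt47}. So there is no ``paper's own proof'' to compare against.

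Your proposal is the standard maximal-counterexample argument due to Lov\'asz, and it is correct. The forward direction is fine. For the converse, the choice of $S$ as the set of universal vertices, the reduction to the claim that each component of $G-S$ is complete, and the construction of a perfect matching once that claim is granted are all carried out properly (in particular, your parity observation that $|S|-r$ is even is exactly what makes the pairing inside $S$ work). The clique claim is also set up correctly: the existence of an induced path $a,b,c$ in $G-S$ and of $d$ with $bd\notin E(G)$, the fact that maximality forces $ac\in M_1$ and $bd\in M_2$, and the analysis of $M_1\triangle M_2$ are all standard and valid. The ``delicate step'' you flag---when $ac$ and $bd$ lie on the same alternating cycle $C$---is genuinely the only place requiring care, but the idea you sketch (walk along $C$ from $d$ until you first hit $\{a,c\}$, say at $a$, use the $M_2$-edges on that arc together with $ab$, and complete with the $M_1$-edges on the rest of $V(G)$) goes through; it would strengthen the write-up to spell this out explicitly rather than leave it as a remark.
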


  \smallskip

  An edge $e$
  of a graph $G$ is {\em  admissible} if there is some perfect matching
  of  $G$ that  contains  it.  A {\em  matching  covered}  graph is  a
  connected  graph of  order  at  least two  in  which  every edge  is
  admissible. A  simple argument shows  that a matching  covered graph
  cannot have a cut vertex. 
  Tutte~\cite{tutt47}  used  Theorem~\ref{thm:tutt}  to  strengthen  a
  classical  theorem  of J.  Petersen  (1891)  by showing  that  every
  $2$-connected cubic  graph is  matching covered.  

Let $G$ be a graph that has a perfect matching. A subset $B$ of $V(G)$
is a {\it barrier} if $o(G-B) = |B|$. Using Tutte's Theorem, one may easily deduce the following
characterization of inadmissible edges:

\begin{prp}
\label{prp:inadmissible}
Let $G$ be a graph that has a perfect matching. An edge $e$ is inadmissible
if and only if there exists a barrier that contains both ends of $e$.
\end{prp}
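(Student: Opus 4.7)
My plan is to reduce everything to Tutte's Theorem via the observation that an edge $e=uv$ of $G$ is admissible if and only if $G-u-v$ has a perfect matching; indeed, extending such a matching by $e$ yields a perfect matching of $G$ containing $e$, and conversely any perfect matching of $G$ containing $e$ restricts to a perfect matching of $G-u-v$.

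For the ``if'' direction I would argue directly. Suppose $B$ is a barrier with $u,v \in B$, and suppose for contradiction that some perfect matching $M$ of $G$ contains $e$. Each of the $o(G-B)$ odd components of $G-B$ must contribute at least one vertex that is $M$-matched to a vertex of $B$. Since $|B|=o(G-B)$, every vertex of $B$ is $M$-matched to a vertex outside $B$. But $u$ and $v$ are $M$-matched to each other via $e$, a contradiction.

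For the ``only if'' direction, assume $e=uv$ is inadmissible. By the opening observation, $G-u-v$ has no perfect matching, so by Tutte's Theorem (Theorem~\ref{thm:tutt}) there is some $S \subseteq V(G)\setminus\{u,v\}$ with $o((G-u-v)-S) > |S|$. The key step is a parity argument: since $G$ has a perfect matching, $|V(G)|$ is even, so $|V(G)\setminus(\{u,v\}\cup S)|$ and $|S|$ have the same parity, whence $o((G-u-v)-S) \ge |S|+2$. Now set $B := S \cup \{u,v\}$; then $G-B = (G-u-v)-S$, so $o(G-B) \ge |S|+2 = |B|$. On the other hand, Tutte's Theorem applied to $G$ itself (which has a perfect matching) gives $o(G-B) \le |B|$. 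Hence $o(G-B)=|B|$, so $B$ is a barrier containing both ends of $e$.

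I do not expect any genuine obstacle: the only subtle point is the parity step used to ``pay for'' the extra two vertices $u,v$ we want to insert into $S$, and this is standard. Once that is in place, both directions are immediate from Tutte's Theorem.
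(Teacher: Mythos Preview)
Your argument is correct and is exactly the standard deduction from Tutte's Theorem that the paper alludes to; the paper itself does not spell out a proof but merely remarks that the proposition ``may easily be deduced'' from Theorem~\ref{thm:tutt}. Both directions are handled cleanly, and the parity step in the ``only if'' direction is the right way to manufacture the barrier $B = S \cup \{u,v\}$.
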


This yields the following characterization of matching covered graphs:

\begin{cor}
Let $G$ be a connected graph that has a perfect matching.
Then $G$ is matching covered if and only if every barrier of $G$ is a stable set.
\end{cor}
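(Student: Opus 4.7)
The plan is to deduce this directly from Proposition~\ref{prp:inadmissible}, since matching coveredness is by definition the statement that every edge is admissible, and the proposition translates admissibility into a statement about barriers.

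For the forward direction, I would argue by contraposition. Suppose some barrier $B$ of $G$ fails to be a stable set, so there exist adjacent vertices $u,v \in B$. Then the edge $e = uv$ has both ends inside the barrier $B$, and Proposition~\ref{prp:inadmissible} immediately gives that $e$ is inadmissible. Since $e \in E(G)$ has no perfect matching of $G$ containing it, $G$ is not matching covered.

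For the reverse direction, I would again argue by contraposition. Suppose $G$ is not matching covered. Because $G$ is connected and has a perfect matching (both hypothesized), the only way this can fail is that some edge $e = uv$ is inadmissible. Applying Proposition~\ref{prp:inadmissible} in the other direction produces a barrier $B$ containing both $u$ and $v$, which is therefore not a stable set. This exhibits a barrier of $G$ that is not stable, completing the contrapositive.

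There is no real obstacle here: the corollary is essentially a repackaging of Proposition~\ref{prp:inadmissible}, obtained by quantifying over all edges $e$ rather than fixing one. The only small point worth flagging in the write-up is that the hypothesis ``$G$ has a perfect matching'' is needed to apply the proposition at all, and that the connectedness and order-at-least-two conditions in the definition of matching covered are already built into the assumptions on $G$.
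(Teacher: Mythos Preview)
Your proposal is correct and is exactly the intended derivation: the paper presents this corollary immediately after Proposition~\ref{prp:inadmissible} with the phrase ``This yields the following characterization,'' and your contrapositive argument in each direction is precisely the unpacking of that proposition quantified over all edges. There is nothing to add.
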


  There  is an  extensive theory  of matching  covered graphs  and its
  applications.  In the  book by  Lov\'asz and  Plummer~\cite{lopl86},
  matching  covered  graphs are  referred  to  as {\em  $1$-extendable
    graphs}. The terminology we use here was  introduced by Lov\'asz
  in his seminal work~\cite{lova87} and in the follow-up work by three of
  us  in~\cite{clm02}.  This  work  relies  on  a  number  of  notions
  introduced and results proved by us and, among others, Lov\'asz, and
  Norine and Thomas. For the benefit of the readers, we shall describe
  them  and  provide references.  For  uniformity,  we have  found  it
  necessary, in  some cases,  to change  the notation  and terminology
  used in the original sources.

  \smallskip

  A number  of cubic
  graphs play special roles in  this theory. They include the complete
  graph     $K_4$,     and     the    four     graphs     shown     in
  Figure~\ref{fig:four-bricks}, namely  the triangular prism  which is
  denoted  by $\overline{C_6}$  because it  is the  complement of  the
  $6$-cycle,  the bicorn  and the  tricorn (as  they resemble,  in our
  imagination,  the  two-cornered  and  three-cornered  hats  worn  by
  pirates),  and the  ubiquitous  Petersen graph  which  we denote  by
  $\mathbb{P}$.

  \bigskip
  \begin{figure}[!ht]
    \begin{center}
      \psfrag{a}{$(a)$}
      \psfrag{b}{$(b)$}
      \psfrag{c}{$(c)$}
      \psfrag{d}{($d$)}
      \psfrag{v1}{$v_1$} 
      \psfrag{v2}{$v_2$} 
      \psfrag{v3}{$v_3$}
      \psfrag{v4}{$v_4$} 
      \psfrag{v5}{$v_5$} 
      \psfrag{v6}{$v_6$}
      \psfrag{v7}{$v_7$} 
      \psfrag{v8}{$v_8$} 
      \psfrag{v9}{$v_9$}
      \psfrag{v10}{$v_{10}$}
      \epsfig{file=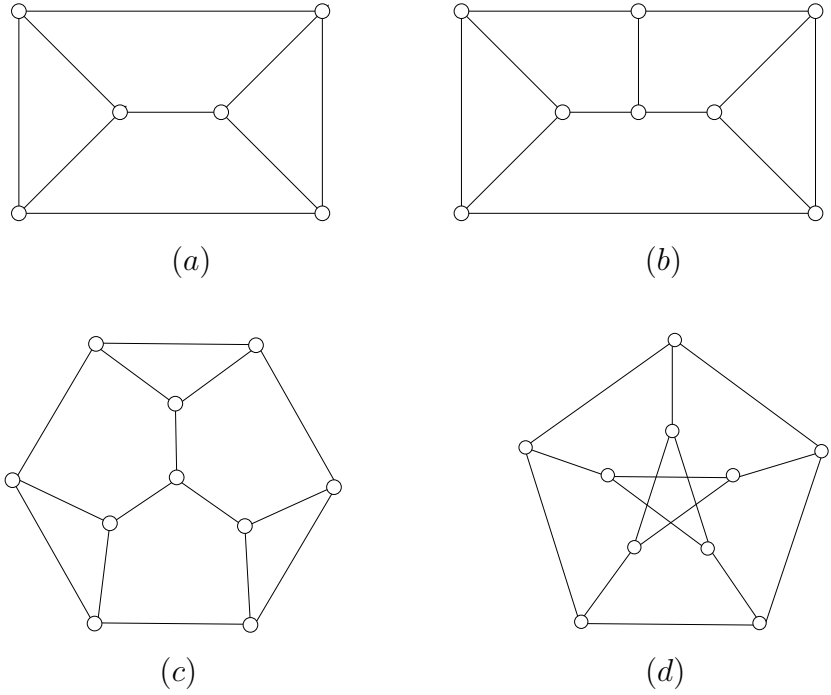, width=.8\textwidth}
    \end{center}
    \caption{(a) $\overline{C_6}$,  (b) the  bicorn, (c)  the tricorn,
      and (d) $\mathbb{P}$}
    \label{fig:four-bricks}
  \end{figure}

  \subsection{Bi-subdivisions}
  A {\em  bi-subdivision} of an  edge $e$ of  a graph $J$  consists of
  subdividing  it by  inserting an  even number  of vertices.
  A graph $H$ obtained from $J$ by bi-subdividing each edge, in any subset of the edges,
 is called a  {\em bi-subdivision}  of  $J$.  (The  term
  `bi-subdivision' is due to McCuaig~\cite{mccu01}. The same notion
  has been  called an `even subdivision'  by some authors, and  `odd
  subdivision' by  some others). If  $J$ is a matching  covered graph,
  then  any bi-subdivision  $H$ of  $J$ is  also matching  covered; in
  fact,  there is  a  one-to-one correspondence  between  the sets  of
  perfect matchings  of $J$ and of $H$.  Figure~\ref{fig:bi-subdivision}
  shows bi-subdivisions of the complete graph $K_4$ and of the triangular
  prism $\overline{C_6}$.

  \begin{figure}[!ht]
    \psfrag{a}{$(a)$} \psfrag{b}{$(b)$}
    \begin{center}
      \epsfig{file=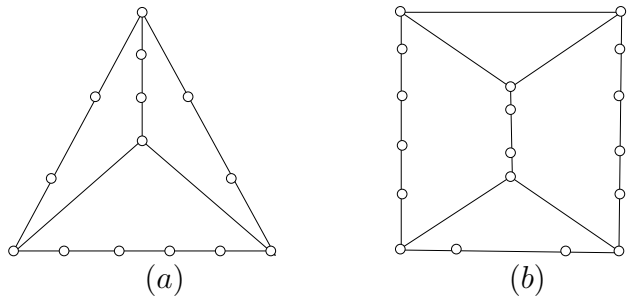, width=.6\textwidth}
      \caption{(a) A bi-subdivision of  $K_4$; (b) a bi-subdivision of
	$\overline{C_6}$}
      \label{fig:bi-subdivision}
    \end{center}
  \end{figure}


  \subsection{Splicing and Separation}

  \subsubsection{The operation of splicing}
  Let $G_1$  with a specified vertex  $u$, and $G_2$ with  a specified
  vertex $v$, be  two disjoint graphs. Suppose that the  degree of $u$
  in $G_1$ and the degree of $v$ in $G_2$ are the same, and that $\pi$
  is a bijection  between the set $\partial_{1}(u)$ of  edges of $G_1$
  incident with $u$,  and the set $\partial_{2}(v)$ of  edges of $G_2$
  incident with $v$. We denote by $(G_1\odot G_2)_{u,v,\pi}$ the graph
  obtained from the union of $G_1-u$  and $G_2-v$ by joining, for edge
  $e$ in  $\partial_1(u)$, the  end of  $e$ in $G_1-u$  to the  end of
  $\pi(e)$ in $G_2-v$,  and refer to it as the  graph obtained by {\em
    splicing} $G_1$ {\em at} $u$ with  $G_2$ {\em at} $v$ with respect
  to  the bijection  $\pi$.   The proof  of  following proposition  is
  straightforward:

  \begin{prp}\label{prp:splicing}
    The  graph $(G_1\odot  G_2)_{u,v,\pi}$  obtained  by splicing  two
    matching  covered   graphs  $G_1$  and  $G_2$   is  also  matching
    covered.\qed
  \end{prp}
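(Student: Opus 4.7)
The plan is to verify the two defining conditions of a matching covered graph, namely that $(G_1\odot G_2)_{u,v,\pi}$ is connected (of order at least two) and that every one of its edges lies in some perfect matching.

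For connectivity, I would first invoke the fact, stated just above in the paper, that a matching covered graph has no cut vertex. Applied to $G_1$ this gives that $G_1-u$ is connected, and similarly $G_2-v$. Since $u$ has degree at least $1$ in $G_1$, the bijection $\pi$ creates at least one splice edge joining $G_1-u$ to $G_2-v$, so the result of splicing is connected. The order is $|V(G_1)|+|V(G_2)|-2\ge 2$, as required.

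The heart of the argument is a matching transfer lemma that I would record once and reuse: for every edge $e_0\in\partial_1(u)$, if $M_1$ is a perfect matching of $G_1$ with $e_0\in M_1$ and $M_2$ is a perfect matching of $G_2$ with $\pi(e_0)\in M_2$, then
\[
M \;:=\; (M_1\setminus\{e_0\})\,\cup\,(M_2\setminus\{\pi(e_0)\})\,\cup\,\{e_0\!\ast\!\pi(e_0)\}
\]
is a perfect matching of $(G_1\odot G_2)_{u,v,\pi}$, where $e_0\!\ast\!\pi(e_0)$ denotes the splice edge associated with the pair $(e_0,\pi(e_0))$. Verifying this is a routine check: removing $u$ from $M_1$ leaves all vertices of $G_1-u$ covered except the $G_1$-neighbor of $u$ along $e_0$, which is exactly the endpoint picked up by the splice edge, and similarly on the $G_2$ side.

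Finally, to prove admissibility, I would distinguish three cases according to where the given edge $e$ of the splice lives. If $e\in E(G_1)-\partial_1(u)$, use that $G_1$ is matching covered to pick a perfect matching $M_1$ of $G_1$ with $e\in M_1$; let $e_0$ be the (unique) edge of $M_1$ incident with $u$, choose any perfect matching $M_2$ of $G_2$ containing $\pi(e_0)$ (which exists because $G_2$ is matching covered), and apply the transfer lemma. The case $e\in E(G_2)-\partial_2(v)$ is symmetric. If $e$ is a splice edge, say arising from $(e_0,\pi(e_0))$, pick $M_1$ and $M_2$ containing $e_0$ and $\pi(e_0)$ respectively and invoke the transfer lemma directly. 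In all three cases the resulting perfect matching of the splice contains $e$.

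There is no real obstacle here; this is a soft structural statement rather than a deep one. The only place one must be slightly careful is in the degenerate situation where $G_1$ or $G_2$ equals $K_2$, but the no-cut-vertex argument and the transfer lemma remain valid because a single isolated vertex counts as a connected graph and has a (trivial, empty) perfect matching after removal of its neighbor's edge.
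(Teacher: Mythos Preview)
Your proof is correct and is precisely the kind of straightforward verification the paper has in mind; the paper in fact omits the proof entirely (the proposition is stated with a terminal \qed\ and prefaced only by ``the proof of the following proposition is straightforward''), so there is nothing to compare against beyond noting that your connectivity-plus-matching-transfer argument is the natural one.
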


  In  general, the  result  of  splicing two  graphs  $G_1$ and  $G_2$
  depends  on the  choices of  $u$, $v$,  $\pi$. (Both  the pentagonal
  prism and  the Petersen graph  can be  realized as splicings  of two
  copies  of the  $5$-wheel  at their  hubs.)  However,  if  $H$ is  a
  vertex-transitive cubic graph, then the result of splicing $G_1=K_4$
  with $G_2=H$ does  not depend, up to isomorphism, on  the choices of
  $u$, $v$, and  $\pi$, and we denote it simply  by $K_4\odot H$. More
  generally, for any cubic graph $H$, the result of splicing $K_4$ and
  $H$  depends,  up   to  isomorphism,  only  on  the   orbit  of  the
  automorphism group of  $H$ to which $v$ belongs (and  the choices of
  $u$ and $\pi$ are immaterial); and we denote it simply by $(K_4\odot
  H)_v$.

  \smallskip

  For   example,   since   both   $K_4$   and   $\overline{C_6}$   are
  vertex-transitive,  there is  only one  way of splicing $K_4$  with
  itself or with $\overline{C_6}$. Thus $K_4\odot K_4=\overline{C_6}$,
  and $K_4\odot\overline{C_6}$  is the  bicorn.  But  the automorphism
  group  of  the bicorn  has  three  orbits and,  consequently,  three
  different graphs  (one of which is  the tricorn) can be  produced by
  splicing  $K_4$   with  the  bicorn   (Figure~\ref{fig:figR8}).  The
  automorphism group of the tricorn also has three orbits and splicing
  $K_4$   with    the   tricorn   yields   three    different   graphs
  (Figure~\ref{fig:figR10}). 

  \begin{figure}[!ht]
    \begin{center}
      \psfrag{a}{($a$)} 
      \psfrag{b}{($b$)} 
      \psfrag{c}{($c$)}
      \epsfig{file=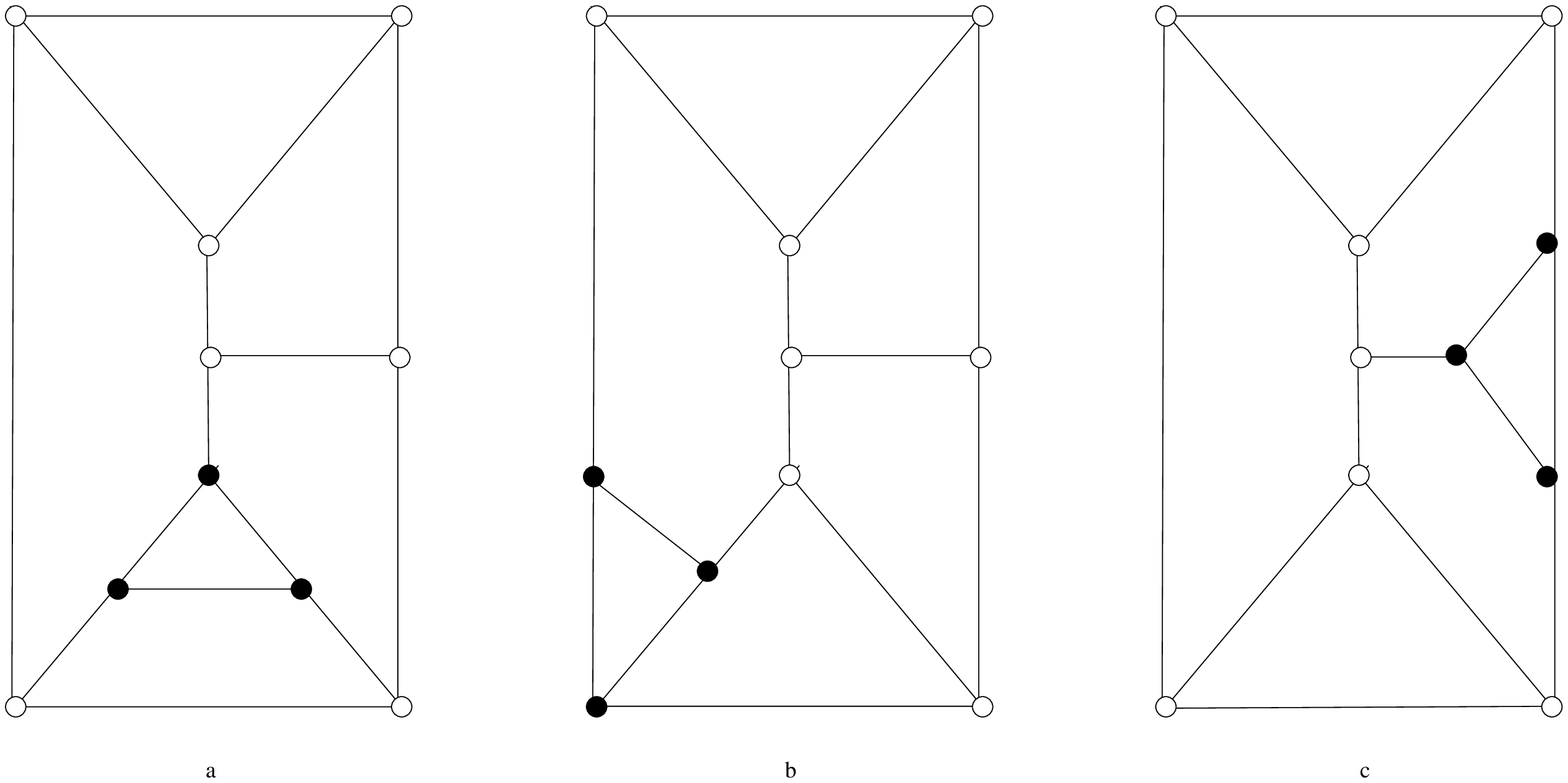, width=.8\textwidth}
      \caption{Cases of splicing $K_4$ and the bicorn}
      \label{fig:figR8}
    \end{center}
  \end{figure}

  \begin{figure}[!ht]
    \begin{center}
      \epsfig{file=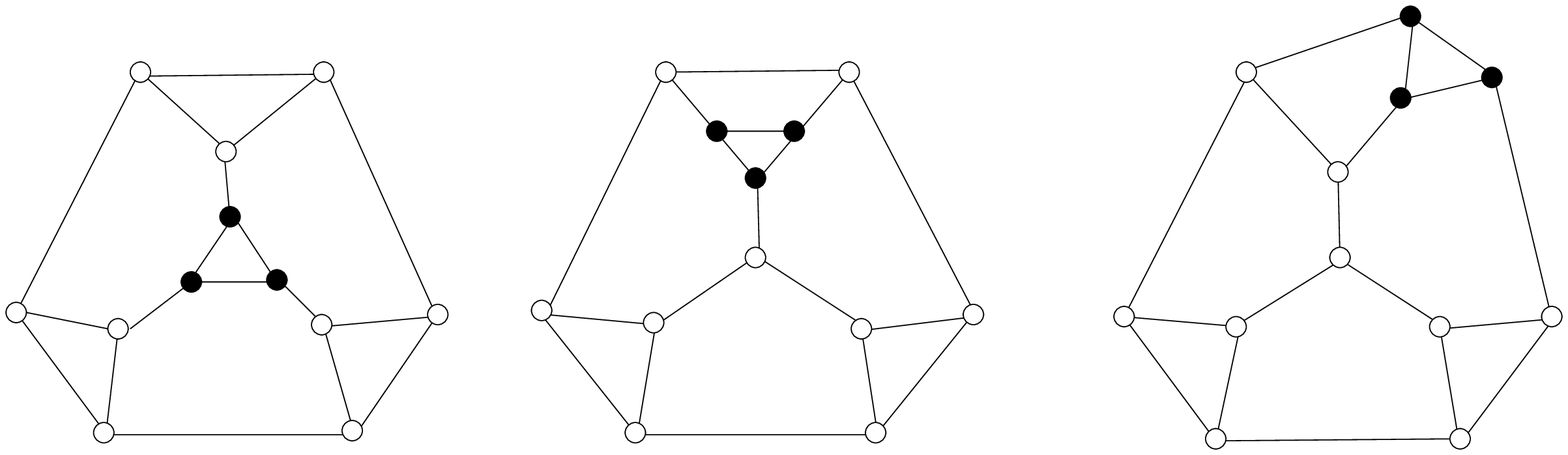, width=\textwidth}
      \caption{Cases of splicing $K_4$ and the tricorn}
      \label{fig:figR10}
    \end{center}
  \end{figure}

%
%

  \subsubsection{Cuts and cut-contractions}
  For a subset $X$ of the vertex  set $V(G)$ of a graph $G$, we denote
  the  set of  edges of  $G$ which  have exactly  one end  in $X$  by
  $\partial(X)$  and  refer  to  it  as  the  {\em  cut} of $X$.
(For a vertex $v$ of $G$, we simplify    the   notation    $\partial(\{v\})$   to
  $\partial(v)$.)        If      $G$       is      connected       and
  $C:=\partial(X)=\partial(Y)$,         then          $Y=X$         or
  $Y=\overline{X}=V -   X$,   and   we   refer   to   $X$   and
  $\overline{X}$ as the {\em shores} of~$C$.

  \smallskip

  For a cut $C$ of a matching covered graph, the parities of the cardinalities
of the two shores are the same. Here, we shall only be concerned with those cuts
that have shores of odd cardinality.
A cut is {\em trivial} if  either shore has just one
  vertex, and is  {\em nontrivial} otherwise.

  \smallskip

  Given any  cut $C:=\partial(X)$  of a  graph $G$,  one can  obtain a
  graph by  shrinking $X$ to a  single vertex ${x}$ (and  deleting any
  resulting loops);  we denote it by  $\Contra{G}{{X}}{{x}}$ and refer
  to the vertex ${x}$ as its  {\em contraction vertex}. The two graphs
  $\Contra{G}{X}{x}$ and  $\Contra{G}{\overline{X}}{\overline{x}}$ are
  the  two {\em  $C$-contractions}  of  $G$.  When  the  names of  the
  contraction  vertices  are  irrelevant   we  shall  denote  the  two
  $C$-contractions of $G$ simply by $G/{X}$ and $G/\overline{X}$.

  \subsubsection{Separating cuts}

  A cut  $C:=\partial(X)$ of  a matching  covered graph  $G$ is  {\em
    separating} if both the  $C$-contractions of  $G$ are
  also matching covered. All trivial cuts are clearly separating cuts.
  Figures~\ref{fig:sep-cuts}(a)  and (b)  show examples  of separating
  cuts, but the cut indicated in Figure~\ref{fig:sep-cuts}(c) is not a
  separating cut.

  \begin{figure}[!ht]
    \psfrag{a}{$(a)$}
    \psfrag{b}{$(b)$}
    \psfrag{c}{$(c)$}
    \begin{center}
      \epsfig{file=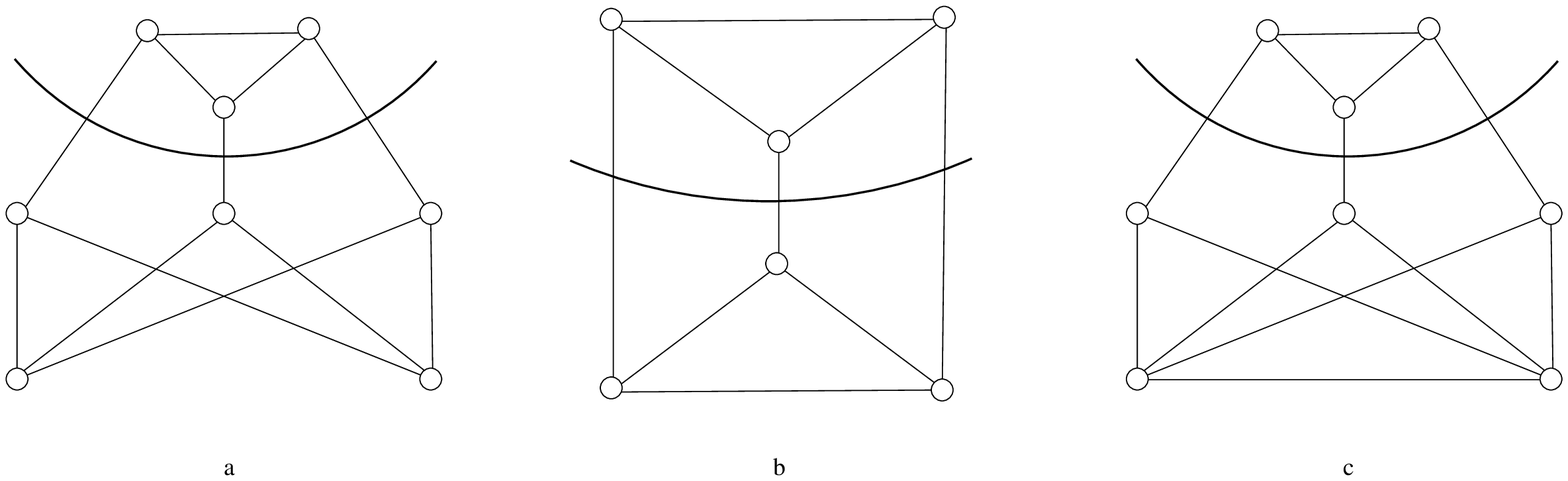, width=\textwidth}
      \caption{(a) and (b) are separating cuts, but (c) is a cut that is
	not}
      \label{fig:sep-cuts}
    \end{center}
  \end{figure}

  The   following  proposition   provides a necessary   and  sufficient
  condition  under which  a  cut in  a matching  covered  graph is  a
  separating cut, and is easily proved.

  \begin{prp}[\protect{\cite[Lemma 2.19]{clm02}}]
    \label{prp:sep-cut}
    A cut $C$ of  a matching covered graph $G$ is  a separating cut if
    and only if, given any edge $e$, there is a perfect matching $M_e$
    of $G$ such that $e \in M_e$ and $|C\cap M_e|=1$.\qed
  \end{prp}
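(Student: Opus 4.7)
The plan is a direct verification of both directions. Throughout, let $x$ and $\overline{x}$ denote the contraction vertices of $G/X$ and $G/\overline{X}$, respectively, and recall that edges of $G/X$ correspond bijectively to edges of $G$ with at least one end in $\overline{X}$ (and symmetrically for $G/\overline{X}$).

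For the ``only if'' direction, assume $C$ is separating and fix an edge $e$ of $G$. I split on whether $e \in C$. If $e \in C$, then $e$ is incident with $x$ in $G/X$ and with $\overline{x}$ in $G/\overline{X}$; since both contractions are matching covered, choose a perfect matching $N_1$ of $G/X$ containing $e$ and a perfect matching $N_2$ of $G/\overline{X}$ containing $e$, and set $M_e := N_1 \cup N_2$ (identifying the two copies of $e$). If $e \notin C$, assume without loss of generality that both ends of $e$ lie in $X$, so that $e$ remains an edge of $G/\overline{X}$; let $N_1$ be a perfect matching of $G/\overline{X}$ containing $e$, let $f$ be the unique edge of $N_1$ incident with $\overline{x}$ (which corresponds to an edge of $C$ in $G$), pick a perfect matching $N_2$ of $G/X$ containing $f$, and set $M_e := N_1 \cup N_2$. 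In either case, $M_e$ is a perfect matching of $G$ containing $e$ with exactly one edge in $C$.

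For the ``if'' direction, assume the matching condition holds and show that $G/X$ is matching covered (the argument for $G/\overline{X}$ is symmetric). Given an edge $e'$ of $G/X$, it arises from an edge $e$ of $G$ with at least one end in $\overline{X}$, so $e \in E(G[\overline{X}]) \cup C$. Apply the hypothesis to obtain a perfect matching $M_e$ of $G$ containing $e$ with $|C \cap M_e| = 1$, and let $f$ be the unique edge of $C \cap M_e$ (possibly $f = e$ itself). The edges of $M_e$ lying in $G[X]$ become loops at $x$ in $G/X$ and are deleted, while the edges of $M_e$ in $G[\overline{X}]$ together with $f$ (now incident with $x$) form a perfect matching of $G/X$ containing $e'$.

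No substantive obstacle is anticipated; the argument is essentially bookkeeping. The only point requiring mild care is the ``pasting'' step in the forward direction when $e \notin C$, namely checking that $N_1$ and $N_2$ combine into a valid perfect matching of $G$. This is immediate from the fact that $N_1 \setminus \{f\}$ is a perfect matching of $G[X]$ minus the $X$-end of $f$, and symmetrically $N_2 \setminus \{f\}$ is a perfect matching of $G[\overline{X}]$ minus the $\overline{X}$-end of $f$, so their union together with $f$ covers every vertex of $G$ exactly once.
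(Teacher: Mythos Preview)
Your proof is correct. The paper does not actually supply a proof of this proposition; it cites the result from \cite[Lemma~2.19]{clm02}, remarks that it ``is easily proved,'' and appends a \qed. Your direct verification---gluing perfect matchings of the two $C$-contractions along a common cut edge for the forward direction, and restricting a perfect matching $M_e$ with $|C\cap M_e|=1$ to one shore for the converse---is the standard argument and is precisely what the paper's ``easily proved'' gestures at. The only small omission is that you do not explicitly note that $G/X$ is connected (required by the definition of matching covered), but this is immediate since contracting a vertex subset in a connected graph preserves connectivity.
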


  Let $G_1$ and  $G_2$ be two disjoint matching  covered graphs. Then,
  as noted before, any  graph $G=(G_1\odot G_2)_{u,v,\pi}$ obtained by
  splicing $G_1$ and $G_2$ is  also matching covered.  Clearly the cut
  $C:=\partial (V(G_1)-u)=\partial  (V(G_2)-v)$, which we refer  to as
  the {\em  splicing cut}, is a  separating cut of $G$,  and $G_1$ and
  $G_2$  are   the  two  $C$-contractions  of   $G$.   Conversely,  if
  $C:=\partial(X)$ is  a separating  cut of  a matching  covered graph
  $G$,  then  $G$  can  be recovered  from  its  two  $C$-contractions
  $G_1:=\Contra{G}{\overline{X}}{\overline{x}}$                    and
  $G_2:=\Contra{G}{X}{x}$,  by   splicing  them  at   the  contraction
  vertices   with   respect   to    the   identity   mapping   between
  $\partial_1(\overline{x})$,   which    is   equal   to    $C$,   and
  $\partial_2(x)$,  which  is also  equal  to  $C$. Thus,  a  matching
  covered graph $G$ has a nontrivial  separating cut if and only if it
  can  be obtained  by splicing  two smaller  matching covered  graphs
  $G_1$ and $G_2$.

  \subsubsection{Separating cut decompositions}
  Suppose  that $G$  is a  matching  covered graph  with a  nontrivial
  separating cut $C$.  Then the two $C$-contractions of  $G$ provide a
  decomposition of  $G$ into two  smaller matching covered  graphs. If
  either  $G_1$ or  $G_2$ has  nontrivial separating  cuts, then  that
  graph  can   be  decomposed  into  even   smaller  matching  covered
  graphs. By applying this  procedure repeatedly, any matching covered
  graph may be decomposed into a list of matching covered graphs which
  are free  of nontrivial separating  cuts. However, depending  on the
  choice of cuts  used in the decomposition procedure,  the results of
  two `separating cut decompositions' may result in entirely different
  lists  of graphs  which are  free of  separating cuts.  For example,
  consider     the     matching     covered     graph     shown     in
  Figure~\ref{fig:2-sep-cut-decomps}    with   the    four   indicated
  separating cuts $C_1$, $C_2$, $C_3$, and $C_4$. By first considering
  cut-contractions  with respect  to $C_1$,  we obtain  a $K_4$  (with
  multiple edges) which  is free of nontrivial separating  cuts, and a
  graph in  which $C_2$  is a  nontrivial separating  cut. Of  the two
  $C_2$-contractions  of  this second  graph,  one  is a  $K_4$  (with
  multiple  edges) and  the  other is  a  graph in  which  $C_3$ is  a
  nontrivial  separating cut.  The  $C_3$-contractions  of this  third
  graph  are two  copies of  $K_4$ (with  multiple edges).  Thus, this
  sequence of separating cut contractions yields a list of four copies
  of  $K_4$  (with  multiple  edges).
However, just  one application of the decomposition
  procedure which  involves the  cut $C_4$, yields  two copies  of the
  $5$-wheel  (with  multiple  edges)  which   happen  to  be  free  of
  nontrivial separating cuts.

  \begin{figure}[!ht]
    \psfrag{C1}{$C_1$}
    \psfrag{C2}{$C_2$}
    \psfrag{C3}{$C_3$}
    \psfrag{C4}{$C_4$}
    \begin{center}
      \epsfig{file=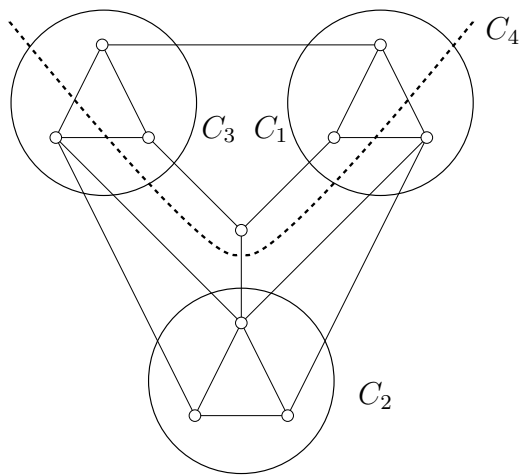, width=.5\textwidth}
      \caption{A  matching covered  graph  that  admits two  different
	separating cut decompositions}
      \label{fig:2-sep-cut-decomps}
    \end{center}
  \end{figure}

  \subsection{Bricks and Braces}

  \subsubsection{Tight cuts, bricks and braces}
  A cut $C$  in a matching covered  graph $G$ is a {\em  tight cut} of
  $G$  if $|C\cap  M|=1$ for  every perfect  matching $M$  of $G$.  It
  follows from  Proposition~\ref{prp:sep-cut} that every tight  cut of
  $G$ is also a separating cut  of $G$. However, the converse does not
  always     hold.    For     example,     the     cut    shown     in
  Figure~\ref{fig:sep-cuts}(b) is  a separating cut,  but it is  not a
  tight cut.

  A matching covered graph, which is free of nontrivial tight cuts, is a
  {\em  brace} if  it is  bipartite  and is a {\em brick} if it is nonbipartite.

  \subsubsection{Tight cut decompositions}
  \label{sec:bricks-braces}
    A {\em tight cut decomposition} of a matching covered
  graph  consists  of  applying the previously described separating  cut
  decomposition   procedure    where   we   restrict    ourselves   to
  cut-contractions  with   respect  to  nontrivial  tight   cuts. Clearly, any
  application  of the tight cut decomposition procedure  on a  given matching  covered
  graph produces a  list of bricks  and braces.   In striking
  contrast to  the separating  cut decomposition procedure,  the tight
  cut decomposition  procedure has the following  significant property
  established by Lov\'asz~\cite{lova87}.

  \begin{thm}[Uniqueness of the Tight Cut Decomposition]
    ~\\Any two  applications of the tight  cut decomposition procedure
    on a  match\-ing covered graph yield  the same list of  bricks and
    braces (up to multiple edges).
  \end{thm}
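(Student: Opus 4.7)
The plan is to prove the statement by induction on $|V(G)|$. The base case is when $G$ has no nontrivial tight cut, i.e., $G$ is already a brick or brace; then the only decomposition is the trivial one and there is nothing to prove. For the inductive step, consider two runs of the procedure whose \emph{first} chosen tight cuts are $C_1=\partial(X_1)$ and $C_2=\partial(X_2)$. The two $C_i$-contractions of $G$ are matching covered graphs with strictly fewer vertices, so by the induction hypothesis each of them has a well-defined list of bricks and braces, independent of subsequent choices. Thus it suffices to show that the multiset $\mathcal{L}_1$ obtained by decomposing $G/X_1$ and $G/\overline{X_1}$ coincides (up to multiple edges) with the multiset $\mathcal{L}_2$ obtained by decomposing $G/X_2$ and $G/\overline{X_2}$.

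To compare $\mathcal{L}_1$ and $\mathcal{L}_2$, the plan is to establish two technical lemmas. The first is a \emph{lifting lemma}: every nontrivial tight cut of a $C_i$-contraction $G/X_i$ corresponds, via its preimage in $G$, to a nontrivial tight cut of $G$ itself. This is straightforward from the definition of tight cuts, since a perfect matching of $G/X_i$ extends uniquely through the splicing correspondence to a perfect matching of $G$ that uses exactly one edge of $C_i$. The second, and more substantial, is an \emph{uncrossing lemma} for tight cuts: if $C_1=\partial(X_1)$ and $C_2=\partial(X_2)$ are two tight cuts of $G$ that cross (meaning all four corners $X_1\cap X_2$, $X_1\cap\overline{X_2}$, $\overline{X_1}\cap X_2$, $\overline{X_1}\cap\overline{X_2}$ are nonempty), then, after possibly passing from $X_2$ to $\overline{X_2}$, both $\partial(X_1\cap X_2)$ and $\partial(\overline{X_1}\cap\overline{X_2})$ are tight cuts of $G$. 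The proof of this lemma is a parity-plus-counting argument: for any perfect matching $M$ of $G$, the edges of $M$ cross $C_1$ and $C_2$ each in exactly one edge, and one analyses the way such an edge crosses the four corners; combining this with the constraint that each corner has odd cardinality (which forces $|\partial(X_1\cap X_2)\cap M|\geq 1$ on one diagonal) forces equality, and hence tightness, on that diagonal.

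With these two lemmas in hand, the induction closes as follows. If $C_1$ and $C_2$ do not cross, then (up to complementing shores) $X_2\subseteq X_1$, so $C_2$ survives as a tight cut in the contraction $G/\overline{X_1}$ while $C_1$ survives as a trivial or tight cut in $G/X_2$; applying induction to the two smaller graphs, and expressing each decomposition as the decomposition of $G/X_1$ refined by $C_2$ (respectively of $G/\overline{X_2}$ refined by $C_1$), shows $\mathcal{L}_1=\mathcal{L}_2$. If $C_1$ and $C_2$ do cross, the uncrossing lemma provides tight cuts $\partial(X_1\cap X_2)$ and $\partial(\overline{X_1}\cap\overline{X_2})$ of $G$; I would use these to build a common refinement of both decompositions and again invoke the inductive hypothesis on the strictly smaller pieces.

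The main obstacle will be the uncrossing lemma and the careful bookkeeping around multiple edges introduced when contracting. In particular, one must verify that the tight cut of $G$ obtained by lifting a tight cut of $G/X_i$ is genuinely nontrivial in $G$ whenever it is nontrivial in the contraction, and that the iterated contractions in the two decompositions produce identical multisets of simple underlying graphs even though the intermediate contractions may introduce different patterns of parallel edges. Handling the possible creation of multiple edges (which is why the theorem is stated only up to multiple edges) is the most delicate part of the argument, but it reduces to showing that each step of uncrossing preserves, at the level of the underlying simple graphs, the eventual list of bricks and braces.
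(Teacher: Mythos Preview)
The paper does not prove this theorem; it is quoted as a result of Lov\'asz~\cite{lova87} and used as background. So there is no ``paper's own proof'' to compare against.

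That said, your outline is the standard route to this result and is essentially sound: induction on the number of vertices, reduction to comparing two initial tight cuts, and the key \emph{uncrossing lemma} for tight cuts. Your formulation of the uncrossing lemma is correct: for two crossing tight cuts $\partial(X_1)$ and $\partial(X_2)$, exactly one diagonal pair of corners is odd (since $|X_1\cap X_2|+|X_1\cap\overline{X_2}|=|X_1|$ is odd), and for that diagonal the corresponding cuts are tight. The counting argument you sketch is the right one: each of $|M\cap\partial(X_1\cap X_2)|$ and $|M\cap\partial(\overline{X_1}\cap\overline{X_2})|$ is odd and hence at least one, while their sum is bounded above by $|M\cap C_1|+|M\cap C_2|=2$ (the ``diagonal'' edges between $X_1\cap X_2$ and $\overline{X_1}\cap\overline{X_2}$ are counted once on each side and cancel), forcing both to equal one.

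Two points deserve more care than your sketch gives them. First, in the crossing case the uncrossed cut $\partial(X_1\cap X_2)$ may be \emph{trivial} (when $|X_1\cap X_2|=1$); you then cannot use it directly as a first step of a decomposition, and must instead argue that $C_2$ already induces a nontrivial tight cut in one $C_1$-contraction and vice versa, reducing to the laminar case. Second, the ``up to multiple edges'' clause is not merely bookkeeping: iterated contractions along different laminar families can produce genuinely different multiplicities, and one must check that the underlying simple bricks and braces agree. The cleanest way is to verify that in the common-refinement step the two orders of contraction (first $C_1$ then the image of $C_2$, versus first $C_2$ then the image of $C_1$) yield isomorphic pairs of graphs up to multiple edges; this is a direct computation once the cuts are laminar. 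With these two points handled, your plan goes through.
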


  In particular, any  two applications of the  tight cut decomposition
  procedure on a  matching covered graph $G$ yield the  same number of
  bricks; we denote  this invariant by $b(G)$ and refer  to it as the
  {\em number  of  bricks}  of~$G$.
  
  \subsubsection{Barrier cuts and $2$-separation cuts}
  Let $G$ be a matching covered graph.
  If $B$ is a barrier
  of  $G$ then,  for  any perfect  matching  $M$ of  $G$  and any  odd
  component  $K$  of $G-B$,  a  simple  counting argument  shows  that
  $|M\cap\partial(V(K))|=1$  (and   also  that   $G-B$  has   no  even
  components).  Consequently,  $\partial(V(K))$ is a tight  cut of $G$
  for any component $K$ of  $G-B$. Tight cuts  of $G$ which  arise in
  this manner are called {\em barrier cuts associated with the barrier
    $B$} (see Figure~\ref{fig:sep-cuts}(a)).

  \smallskip

  We shall  refer to  a vertex  cut $\{u,v\}$  of $G$  which is  not a
  barrier  as a  \mbox{\em $2$-separation}  of  $G$.  When  $\{u,v\}$ is  a
  $2$-separation of $G$, the fact that $\{u,v\}$ is not a barrier
implies  that  each  component  of  the
  disconnected graph $G-u-v$  is even. Let $S$ denote  the vertex set
  of  the union  of  a nonempty  proper subset  of  the components  of
  $G-u-v$.      It    can     then    be     verified    that     both
  $C:=\partial(S\cup\{u\})$  and  $D:=\partial(S\cup\{v\})$ are  tight
  cuts of $G$.  Tight cuts which  arise in this manner are called {\em
    $2$-separation cuts}. See Figure~\ref{fig:2-sep-cut}.

    \begin{figure}[h!t]
    \begin{center}
      \psfrag{u}{$u$} 
      \psfrag{v}{$v$} 
      \psfrag{C}{$C$} 
      \psfrag{D}{$D$}
      \psfrag{H1}{} 
      \psfrag{H2}{}
      \psfrag{L}{}
      \psfrag{L'}{}
      \epsfig{file=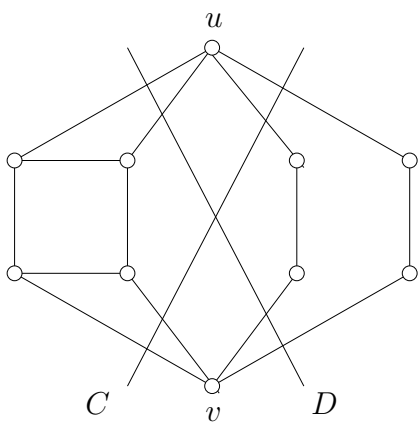,width=.4\textwidth}
      \caption{Two $2$-separation cuts in a matching covered graph} 
      \label{fig:2-sep-cut}
    \end{center}
  \end{figure}

%

  \smallskip

  An {\em ELP-cut} in a matching covered graph is a tight cut which is
  either a barrier  cut or is a $2$-separation cut.   A theorem due to
  Edmonds,  Lov\'asz and  Pulleyblank~\cite{elp82}  states  that if  a
  matching covered  graph has  nontrivial tight cuts,  then it  has an
  ELP-cut. The  following characterization of bricks  is a consequence
  of that basic result.

  \begin{thm}[{\sc The ELP Theorem}]\label{thm:elp}
    A  matching  covered  graph is  a  brick  if  and  only if  it  is
    $3$-connected and is free of nontrivial barriers.
  \end{thm}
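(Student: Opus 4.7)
My plan is to deduce both directions from the definition of a brick (matching covered, nonbipartite, free of nontrivial tight cuts) together with the Edmonds--Lov\'asz--Pulleyblank result quoted in the excerpt, which asserts that any nontrivial tight cut in a matching covered graph can be refined to an ELP-cut, i.e.\ to a nontrivial barrier cut or to a nontrivial $2$-separation cut.

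For the forward direction, suppose $G$ is a brick. To show $3$-connectedness, I would consider any $2$-vertex cut $\{u,v\}$. Since $G$ has a perfect matching and hence even order, a parity argument gives $o(G-\{u,v\})\in\{0,2\}$; in the first subcase $\{u,v\}$ is a $2$-separation and the associated $2$-separation cuts are nontrivial tight cuts (both shores have size at least two because the construction leaves a proper nonempty component on each side), contradicting the brick hypothesis, while in the second subcase $\{u,v\}$ is a barrier of size $2$. Thus it suffices to rule out nontrivial barriers. If $B$ is such a barrier with odd components $K_1,\dots,K_{|B|}$ of $G-B$, then either some $|V(K_i)|\ge 2$, in which case $\partial(V(K_i))$ is a nontrivial tight cut (its other shore $B\cup\bigcup_{j\ne i}V(K_j)$ has size at least $|B|\ge 2$), contradicting brickness; or every $K_i$ is a singleton, in which case $V(G)=B\cup\{v_1,\dots,v_{|B|}\}$ is bipartitioned into two stable sets (the $v_i$ are pairwise nonadjacent by construction and $B$ is stable by the corollary to Proposition~\ref{prp:inadmissible}), making $G$ bipartite and contradicting brickness again.

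For the converse, assume $G$ is matching covered, $3$-connected, and has no nontrivial barrier. First I would show $G$ is nonbipartite: if $G$ had bipartition $(A,B)$, then $3$-connectedness forces $|A|=|B|\ge 2$, and $A$ would itself be a nontrivial barrier (since $G-A$ consists of $|B|=|A|$ isolated vertices), contrary to hypothesis. Next, suppose for contradiction that $G$ has a nontrivial tight cut. By the ELP theorem, $G$ admits an ELP-cut, which is either a nontrivial $2$-separation cut---impossible since $G$ is $3$-connected---or a nontrivial barrier cut $\partial(V(K))$ associated with some barrier $B$. The nontriviality forces both shores to have size at least two; in particular $|V(G)\setminus V(K)|\ge 2$, so $B$ together with the remaining odd components of $G-B$ has at least two vertices, which (as the number of odd components equals $|B|$) forces $|B|\ge 2$, giving a nontrivial barrier and the desired contradiction.

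The only non-routine ingredient is the ELP theorem itself, used as a black box; everything else is careful bookkeeping with shore sizes, parities, and the observation that a matching covered graph has stable barriers. The main subtlety I expect is ensuring that the nontriviality of a barrier cut translates into nontriviality of the underlying barrier (rather than merely of an unusually large odd component), which is why the shore-size analysis in the converse deserves explicit attention.
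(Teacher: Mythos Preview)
The paper does not give a detailed proof of this theorem; it simply records it as a consequence of the Edmonds--Lov\'asz--Pulleyblank result that a matching covered graph with a nontrivial tight cut must admit an ELP-cut. Your proposal is precisely the natural fleshing-out of that derivation---the forward direction via the explicit construction of barrier cuts and $2$-separation cuts, and the converse via the ELP black box---and the argument is correct (the shore-size bookkeeping you flag is indeed the only place requiring care, and you handle it properly).
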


  Characterization  of  braces  can  be  found  in  \cite{lova87} and \cite{lopl86}.

%

  \subsubsection{Six families of bricks and braces}

  We  now describe  six  families  of graphs  that  are of  particular
  interest in this work.

  \subsubsection*{Odd Wheels}
  Let $C_k$ be an  odd cycle of length at least  three. Then, the {\em
    odd wheel} $W_k$ is defined to be the join of $C_k$ and $K_1$. The
  smallest odd wheel is $W_3\cong K_4$.   For $k\geq 5$, $W_k$ has one
  vertex  of degree  $k$,  called  its {\em  hub};  the remaining  $k$
  vertices lie on a cycle which is referred to as the {\em rim}. Every
  odd wheel is a brick.

  \subsubsection*{Biwheels}
  Let $C_{2k}$ be an even cycle of length six or more with bipartition
  $(X,X')$, and let  $h$ and $h'$ be two vertices  ({\em hubs}) not on
  that cycle.   The graph obtained  by joining  $h$ to each  vertex in
  $X$, and $h'$  to each vertex in  $X'$, is known as  a {\em biwheel}
  with  $h$ and  $h'$ as  its hubs.  We shall  denote it  by $B_{2k}$.
  Figure~\ref{fig:biwheels}(a) shows a biwheel  on eight vertices (the
  two half-edges  labelled $e$  are to be  identified to  complete the
  rim); it is isomorphic to the cube.  Every biwheel is a brace.

  \subsubsection*{Truncated biwheels}
  Let $(v_1,v_2,\dots,v_{2k})$ be  a path of odd  length, where $k\geq
  2$, and let  $h$ and $h'$ be  two vertices ({\em hubs})  not on that
  path.   We shall  refer  to the  graph obtained  by  joining $h$  to
  vertices in  $\{v_1,v_3,\dots,v_{2k-1}\}\cup\{v_{2k}\}$, and joining
  $h'$ to vertices in  $\{v_1\}\cup\{v_2,v_4,\dots,v_{2k}\}$ as a {\em
    truncated biwheel}. We shall denote  it by $T_{2k}$.  The smallest
  truncated          biwheel         is          isomorphic         to
  $\overline{C_6}$.  Figure~\ref{fig:biwheels}(b)  shows  a  truncated
  biwheel on eight vertices.  Every truncated biwheel is a brick.

  \begin{figure}[!ht]
    \begin{center}
      \psfrag{a}{$(a)$}
      \psfrag{b}{$(b)$}
      \psfrag{e}{$e$}
      \epsfig{file=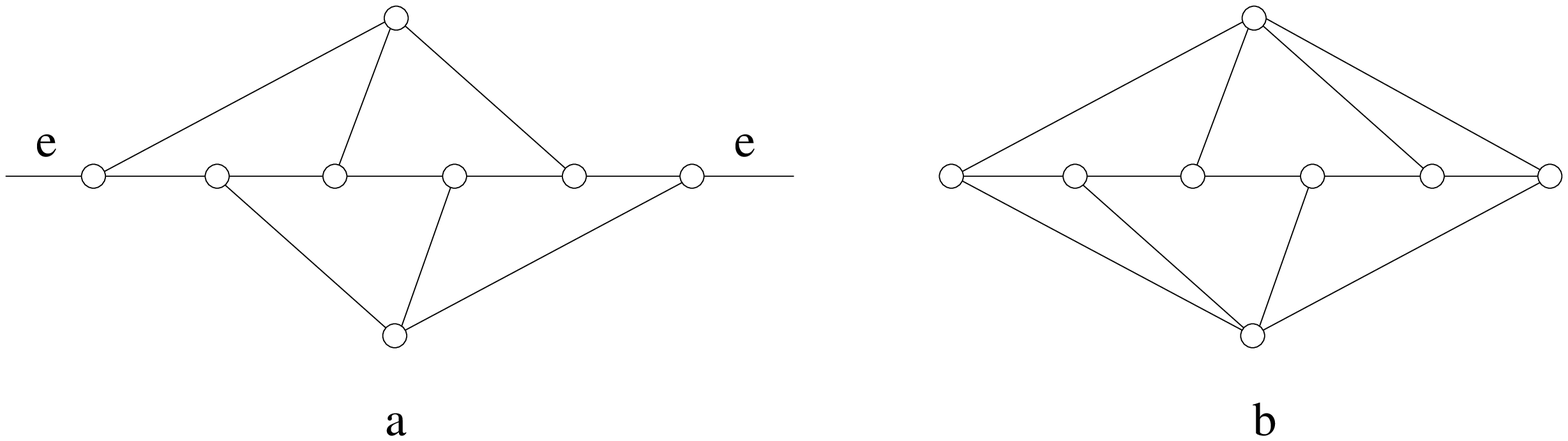,width=.8\textwidth}
      \caption{(a) A biwheel, (b) a truncated
	biwheel.}
      \label{fig:biwheels}
    \end{center}
  \end{figure}
  Norine and Thomas~\cite{noth07} refer to truncated biwheels as lower
  prismoids.

  \subsubsection*{Prisms}
  Let $(u_1,u_2,\dots,u_k,u_1)$ and  $(v_1,v_2,\dots, v_k,v_1)$ be two
  disjoint cycles of length at least three. The graph on $2k$ vertices
  obtained from  the union  of these  two cycles  by joining  $u_i$ to
  $v_i$, for $1\leq i\leq k$ is  the {\em $k$-prism}. (In other words,
  the $k$-prism  is the Cartesian  product of the $k$-cycle  $C_k$ and
  the  complete  graph  $K_2$.)   We shall  denote  the  $k$-prism  by
  $P_{2k}$.   The   $3$-prism  $P_6$,  commonly  known   as  the  {\em
    triangular  prism}, is  isomorphic  to  $\overline{C_6}$, and  the
  $4$-prism  is   isomorphic  to  the   cube.   The  graph   shown  in
  Figure~\ref{fig:bricks-indices}(b) is the  $5$-prism, commonly known
  as the {\em pentagonal prism}.  (For every odd $k$, the $k$-prism is
  a brick and for every even $k$, the $k$-prism is a brace.)

  \subsubsection*{M{\"o}bius Ladders}
  Let $(u_1,u_2,\dots,u_k)$ and $(v_1,v_2,\dots, v_k)$ be two disjoint
  paths of length at least two.   The graph obtained from the union of
  these two paths by joining $u_i$ to $v_i$, for $1\leq i\leq k$, and,
  in addition, joining $u_1$ to $v_k$, and $u_k$ to $v_1$, is known as
  the {\em M\"{o}bius ladder} of  order $2k$. The M\"{o}bius ladder of
  order six is  isomorphic to $K_{3,3}$, and the  M\"{o}bius ladder of
  order eight is shown in Figure~\ref{fig:ladders}(a) (this drawing is
  to be taken  as an embedding on the M\"{o}bius  strip).  When $k$ is
  odd, the M\"{o}bius ladder of order $2k$ is a brace and, when $k$ is
  even, the M\"{o}bius ladder of order $2k$ is a brick.
  
  \begin{figure}[!ht]
    \begin{center}
      \psfrag{a}{$(a)$}
      \psfrag{b}{$(b)$}
      \psfrag{e}{$e$}
      \psfrag{f}{$f$}
      \epsfig{file=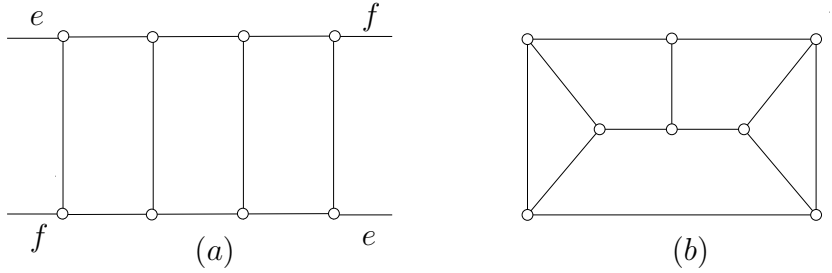,width=.8\textwidth}
      \caption{(a) a M{\"o}bius ladder, (b) the bicorn (a staircase)}
      \label{fig:ladders}
    \end{center}
  \end{figure}

  \subsubsection*{Staircases}
  Let $(u_1,u_2,\dots,u_k)$ and $(v_1,v_2,\dots, v_k)$ be two disjoint
  paths of length  at least two. The graph obtained  from the union of
  these  two paths  by adjoining  two new  vertices $x$  and $y$,  and
  joining  $u_i$ to  $v_i$,  for  $1\leq i\leq  k$,  and, in  addition
  joining $x$ to $u_1$ and $v_1$, $y$  to $u_k$ and $v_k$, and $x$ and
  $y$ to each other, is referred to as a {\em staircase} by Norine and
  Thomas~\cite{noth07}.  The  staircase on six vertices  is isomorphic
  to     the    triangular     prism.    The     graph    shown     in
  Figure~\ref{fig:ladders}(b)  is  the  staircase on  eight  vertices.
  Every staircase is a brick.

  \subsubsection{Near-bricks}
  Let $G$  be a matching covered  graph and let $C:=\partial(X)$  be a
  separating cut of  $G$ such that the subgraph $G[X]$ induced
  by $X$ is bipartite. As $C$  is a separating cut, by definition, the
  $C$-contraction   $G_1:=\Contra{G}{\overline{X}}{\overline{x}}$   is
  matching  covered and  thus  $|X|$ is  odd. So,  one  of the  colour
  classes of  $G[X]$ is larger  than the  other. We denote  the larger
  colour class  by $X_+$  and the  smaller colour  class by  $X_-$ and
  refer to  them, respectively, as  {\em majority} and  {\em minority}
  parts of $X$.  If the contraction vertex  $\overline{x}$ were joined
  by an  edge to  a vertex $v$  in the minority  part, then  the graph
  $G_1-\{\overline{x},v\}$  would be  a  bipartite  graph with  colour
  classes  of  different  cardinalities,  implying that  there  is  no
  perfect    matching   of    $G_1$    which    contains   the    edge
  $\{\overline{x},v\}$. This  is impossible because $G_1$  is matching
  covered.  The following  results  may be  easily  deduced from  this
  observation.

  \begin{prp}\label{prp:bipartite-shore}
    Let $C:=\partial(X)$  be a  separating cut  of a  matching covered
    graph $G$ such  that the subgraph $G[X]$ is bipartite.
Then  majority part $X_+$ of  $X$ is a barrier  of $G$,
    and $C$ is a tight cut associated with this barrier.\qed
  \end{prp}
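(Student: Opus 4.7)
The plan is to build on the observation recorded in the paragraph preceding the proposition: since $G_1 := G/\overline{X}$ is matching covered and $G[X]$ is bipartite with stable parts $X_+$ and $X_-$, no edge of $C$ can have its end in the minority part $X_-$. Hence every edge of $C$ is incident to $X_+$, and $G_1$ itself is bipartite with stable parts $X_+$ and $X_- \cup \{\overline{x}\}$. Because $G_1$ has a perfect matching, these parts must be equinumerous, which yields the crucial numerical identity $|X_+| = |X_-| + 1$. Everything else will be extracted from this identity.

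The tightness of $C$ follows at once. Given any perfect matching $M$ of $G$, the $|X_-|$ vertices of $X_-$ must be matched inside $G[X]$ to distinct vertices of $X_+$ (since $X_-$ is stable and no $C$-edge touches it); since $X_+$ is also stable, the remaining $|X_+| - |X_-| = 1$ vertex of $X_+$ must be matched through $C$. Thus $|M \cap C| = 1$ for every perfect matching, so $C$ is a tight cut.

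To show $X_+$ is a barrier, I would count odd components of $G - X_+$. Each of the $|X_-|$ vertices of $X_-$ is isolated (being stable in $G[X]$ and, as just noted, having no $C$-edge into $\overline{X}$), contributing $|X_-|$ singleton odd components. Moreover $|\overline{X}|$ is odd (since $|V(G)|$ is even and $|X|$ is odd because $G_1$ is of even order), so the odd-order graph $G[\overline{X}]$ must contain at least one further odd component. Hence $o(G - X_+) \geq |X_-| + 1 = |X_+|$, and Tutte's Theorem~\ref{thm:tutt} supplies the reverse inequality, giving equality.

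Finally, to identify $C$ as \emph{the} barrier cut associated with $X_+$, I would invoke the fact recorded earlier in the paper that $G - B$ has no even components whenever $B$ is a barrier. Thus every component of $G[\overline{X}]$ is odd; combined with the exact equality $o(G-X_+) = |X_-| + 1$, this forces $G[\overline{X}]$ to be connected. Writing $K$ for its unique (odd) component, we have $V(K) = \overline{X}$, so $C = \partial(\overline{X}) = \partial(V(K))$ is exactly the barrier cut of $X_+$ corresponding to $K$. The argument is essentially routine once the identity $|X_+| = |X_-| + 1$ is in hand; the only modest obstacle is assembling the bipartite structure of $G_1$ carefully enough to read off this identity and the fact that $C$ is incident only to $X_+$.
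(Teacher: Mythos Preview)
Your argument is correct and is precisely the routine verification the paper has in mind: the proposition is stated with a \qed\ and is advertised as ``easily deduced'' from the preceding observation that no edge of $C$ meets $X_-$, which is exactly your starting point. Your careful extraction of $|X_+|=|X_-|+1$, the tightness of $C$, the barrier count, and the connectedness of $G[\overline{X}]$ are all sound and constitute the intended elaboration.
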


  \begin{cor}\label{cor:cuts-in-bipartite-graphs}
    A cut of  a bipartite matching covered graph is separating if and only
    if it is tight.\qed
  \end{cor}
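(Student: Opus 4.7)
The plan is very short because most of the work has already been done in Proposition~\ref{prp:bipartite-shore}, which only needs to be invoked on both shores.

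First I would dispose of the easy direction: every tight cut of a matching covered graph (bipartite or not) is separating. This is an immediate consequence of Proposition~\ref{prp:sep-cut}, as already noted in the paragraph introducing tight cuts; it requires no additional argument in the bipartite case.

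For the converse, suppose $G$ is a bipartite matching covered graph and $C:=\partial(X)$ is a separating cut. The key observation is that since $G$ itself is bipartite, the induced subgraph $G[X]$ is automatically bipartite, so the hypothesis of Proposition~\ref{prp:bipartite-shore} is satisfied. That proposition then delivers the conclusion directly: the majority part $X_+$ of $X$ is a barrier of $G$, and $C$ is the tight cut associated with this barrier. In particular, $C$ is a tight cut, as required.

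There is essentially no obstacle: the corollary is a near-immediate consequence of Proposition~\ref{prp:bipartite-shore} once one notices that the bipartiteness of $G$ trivially propagates to $G[X]$. The only thing worth double-checking in a careful write-up is that the proposition's implicit parity conclusion ($|X|$ odd, so that $X_+$ and $X_-$ make sense) is guaranteed by $C$ being separating, which is exactly how Proposition~\ref{prp:bipartite-shore} was set up.
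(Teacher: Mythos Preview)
Your proposal is correct and is exactly the intended derivation: the paper marks the corollary with an immediate \qed\ because it follows directly from Proposition~\ref{prp:bipartite-shore} (plus the trivial tight $\Rightarrow$ separating direction), precisely as you outline. There is nothing to add.
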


  \begin{cor}\label{cor:b=0}
    A matching covered  graph $G$ is bipartite if and only if \mbox{$b(G)=0$}. \qed
  \end{cor}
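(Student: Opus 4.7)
The plan is to derive the corollary from the following key lemma: for any tight cut $C = \partial(X)$ of a matching covered graph $G$, $G$ is bipartite if and only if both $C$-contractions $G/X$ and $G/\overline{X}$ are bipartite. Given this lemma, the corollary follows by induction on $|V(G)|$. If $G$ is bipartite, every tight-cut contraction preserves bipartiteness by the ``only if'' direction of the lemma, so the tight cut decomposition produces only bipartite pieces, i.e.\ braces, and hence $b(G) = 0$. Conversely, if $b(G) = 0$, every piece of the decomposition is a brace (bipartite); reversing the decomposition via splicing and repeatedly applying the ``if'' direction of the lemma propagates bipartiteness back up to $G$.

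For the forward direction of the key lemma, suppose $G$ is bipartite with bipartition $(A,B)$ and fix a perfect matching $M$. Since $C$ is tight, $M$ contains a unique edge $uv$ of $C$, say with $u \in X$ and $v \in \overline{X}$, and the remaining $M$-edges perfectly match $X \setminus \{u\}$ inside $G[X]$ and $\overline{X} \setminus \{v\}$ inside $G[\overline{X}]$. A counting argument on the two colour classes of $G[X]$ forces $|X \cap A|$ and $|X \cap B|$ to differ by exactly one, with $u$ in the larger side; since this imbalance depends only on $X$, the $X$-end of the unique $C$-edge of $M$ lies in the same colour class as $M$ varies. Because $G$ is matching covered, every edge of $C$ plays the role of this unique $C$-edge for some perfect matching, so every edge of $C$ joins that fixed colour class of $X$ to the opposite colour class of $\overline{X}$. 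One may then $2$-colour $G/X$ by inheriting the bipartition of $\overline{X}$ and placing $\overline{x}$ in the colour class complementary to its neighbours, and symmetrically for $G/\overline{X}$.

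For the reverse direction, suppose $(A_1,B_1)$ bipartitions $G/X$ with $\overline{x} \in A_1$ and $(A_2,B_2)$ bipartitions $G/\overline{X}$ with $x \in A_2$. Colour each vertex of $X$ by its side in $(A_2,B_2)$ and each vertex of $\overline{X}$ by the \emph{opposite} of its side in $(A_1,B_1)$. Internal edges of $G[X]$ and of $G[\overline{X}]$ are properly coloured by construction; for an edge $uv \in C$ with $u \in X$ and $v \in \overline{X}$, the endpoint $u$ is a neighbour of $x$ in $G/\overline{X}$ and therefore lies in $B_2$, while $v$ is a neighbour of $\overline{x}$ in $G/X$ and lies in $B_1$, so after the swap they receive opposite colours.

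The main obstacle is the forward direction of the key lemma: one must combine tightness (each perfect matching meets $C$ in exactly one edge) with matching coverage (every edge of $C$ lies in some perfect matching) to constrain \emph{every} edge of $C$ to respect the bipartition. The reverse direction is a routine gluing, and the inductive wrap-up relies on the uniqueness of the tight cut decomposition to conclude that $b(G) = 0$ is equivalent to every piece of any tight cut decomposition being a brace.
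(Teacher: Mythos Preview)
Your proposal is correct and follows essentially the same route the paper has in mind: the corollary is marked with \qed\ because it is meant to fall out of Proposition~\ref{prp:bipartite-shore} (your forward direction is a direct re-derivation of that proposition) together with the elementary gluing of bipartitions (your reverse direction), wrapped in an induction along the tight cut decomposition. One small remark: the appeal to uniqueness of the tight cut decomposition in your final paragraph is not actually needed for the argument --- it is only what makes $b(G)$ well-defined in the first place --- so you may safely drop that clause.
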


  \begin{cor}\label{cor:near-bricks}
    Let $G$ be a matching covered  graph with $b(G)=1$, and let $C$ be
    a tight  cut of $G$.  Then one of  the $C$-contractions of  $G$ is
    bipartite, the  majority part of that  shore is a barrier  of $G$,
    and $C$ is a barrier cut associated with that barrier.
  \end{cor}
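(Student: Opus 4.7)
The plan is to reduce the statement to Proposition~\ref{prp:bipartite-shore} via the Uniqueness of the Tight Cut Decomposition. Since $b(G)=1$, I expect one of the two $C$-contractions to carry the unique brick and the other to decompose entirely into braces and hence be bipartite; once this bipartiteness is in hand, the conclusion is an immediate application of Proposition~\ref{prp:bipartite-shore}.

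The trivial case $C=\partial(v)$ can be handled by inspection: the contraction $G/(V-v)$ is a two-vertex multigraph, which is bipartite; and the $2$-connectivity of the matching covered graph $G$ forces $G-v$ to be connected and hence to consist of a single odd component, so $\{v\}$ is a barrier of $G$ whose associated barrier cut is exactly $C$. For the nontrivial case, I write the two $C$-contractions as $G_1:=G/\overline{X}$ and $G_2:=G/X$; since every tight cut is separating, both $G_1$ and $G_2$ are matching covered. Performing the tight cut decomposition of $G$ by first contracting at $C$ and then recursively decomposing $G_1$ and $G_2$ produces, by the Uniqueness of the Tight Cut Decomposition, a list that contains exactly $b(G)=1$ brick in total. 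Therefore $b(G_1)+b(G_2)=1$, and without loss of generality $b(G_2)=0$. By Corollary~\ref{cor:b=0}, $G_2$ is then bipartite.

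To finish, I would observe that $G[\overline{X}]=G_2-x$ is a subgraph of the bipartite graph $G_2$ and is therefore itself bipartite. The cut $C=\partial(\overline{X})$ is thus a separating cut of $G$ one of whose shores, namely $\overline{X}$, has bipartite induced subgraph, so Proposition~\ref{prp:bipartite-shore} applies (with $\overline{X}$ playing the role of the ``$X$'' there) and yields immediately that the majority part of $\overline{X}$ is a barrier of $G$ and that $C$ is a barrier cut associated with this barrier. The only non-routine step is the first one, in which the global invariant $b(G)=1$ is converted into the bipartiteness of a specific $C$-contraction; this is where the Uniqueness of the Tight Cut Decomposition is used essentially. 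Everything after that is bookkeeping plus an invocation of Proposition~\ref{prp:bipartite-shore} and Corollary~\ref{cor:b=0}.
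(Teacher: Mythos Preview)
Your proposal is correct and is precisely the argument the paper has in mind: the corollary is stated without proof immediately after Proposition~\ref{prp:bipartite-shore} and Corollary~\ref{cor:b=0}, and the intended derivation is exactly to use the uniqueness of the tight cut decomposition to get $b(G_1)+b(G_2)=1$, invoke Corollary~\ref{cor:b=0} on the contraction with $b=0$, and then feed the bipartite shore into Proposition~\ref{prp:bipartite-shore}. Your separate handling of the trivial cut is fine (and needed, since the decomposition procedure uses only nontrivial tight cuts), though one could also just observe directly that for $C=\partial(v)$ the two-vertex contraction is bipartite and then apply Proposition~\ref{prp:bipartite-shore} to the singleton shore.
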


  We refer  to a matching  covered graph $G$  with $b(G)=1$ as  a {\em
    near-brick}. Properties  of near-bricks are  in many ways  akin to
  those of bricks and, in trying to prove statements concerning bricks
  by  induction,  it   is  often  convenient  to  try   to  prove  the
  corresponding statements for near-bricks.

  \subsubsection{Bi-contractions and retracts}
  Suppose that $v_0$  is a vertex of degree two  in a matching covered
  graph $G$ of order four or more,  and let $v_1$ and $v_2$ denote the
  two    neighbours    of    $v_0$.    Then    $\partial(X)$,    where
  $X:=\{v_0,v_1,v_2\}$,   is   a  tight   cut   of   $G$.  The   graph
  $G/\overline{X}$  is  a brace  on  four  vertices,  and $G/X$  is  a
  matching covered  graph on  $|V(G)|-2$ vertices, and  is said  to be
  obtained by {\em bi-contracting} the vertex $v_0$ in $G$.

  \medskip

  Let $G$ be  any matching covered graph which has  order four or more
  and  is  not  an  even  cycle.   Then  one  can  obtain  a  sequence
  $(G_1,G_2,\dots,G_r)$ of  graphs such  that (i) $G_1=G$,  (ii) $G_r$
  has no  vertices of degree two,  and (iii) for $2\leq  i\leq r$, the
  graph $G_i$ is obtained from $G_{i-1}$ by bi-contracting some vertex
  of degree two  in it. Then, up to isomorphism,  the graph $G_r$ does
  not   depend   on   the  sequence   of   bi-contractions   performed
  (see~\cite{clm05}~Proposition 3.11).  We  denote it by $\widehat{G}$
  and refer to it as the {\em retract} of $G$. The retracts of the two
  graphs shown  in Figure~\ref{fig:bi-subdivision}  are, respectively,
  $K_4$  and  $\overline{C_6}$.   More  generally, the  retract  of  a
  bi-subdivision $H$ of a brick $J$ is $J$ itself.

  \smallskip

  Now we  define the operation of  bi-splitting a vertex which  may be
  regarded  as  the  converse  of   the  above  defined  operation  of
  bi-contraction. Given  any vertex  $v$ of  a matching  covered graph
  $H$,  we first  split  $v$ into  two new  vertices  $v_1$ and  $v_2$
  (called {\em outer vertices}), add  a third new vertex $v_0$ (called
  the {\em  inner vertex}), join  $v_0$ to  both $v_1$ and  $v_2$, and
  then distribute  the edges of  $H$ incident  to $v$ among  $v_1$ and
  $v_2$ in  such a  way that both  $v_1$ and $v_2$  have at  least two
  distinct   neighbours.    We   denote   the   resulting   graph   by
  $H\{v\rightarrow (v_1,v_0,v_2)\}$  and say that it  is obtained from
  $H$    by     {\em    bi-splitting}    the    vertex     $v$    (see
  Figure~\ref{fig:bisplitting}).    It    is   easy   to    see   that
  $H\{v\rightarrow (v_1,v_0,v_2)\}$  is a  matching covered  graph and
  that it has two more vertices and two more edges than $H$. Note that
  $H$  can  be  recovered from  $H\{v\rightarrow  (v_1,v_0,v_2)\}$  by
  bi-contracting the vertex $v_0$.

  \begin{figure}[!ht]
    \psfrag{H}{$H$}    
    \psfrag{G}{$H\{v\rightarrow   (v_1,v_0,v_2)\}$}
    \psfrag{v}{$v$}
    \psfrag{v0}{$v_0$}
    \psfrag{v1}{$v_1$}
    \psfrag{v2}{$v_2$}
    \begin{center}
      \epsfig{file=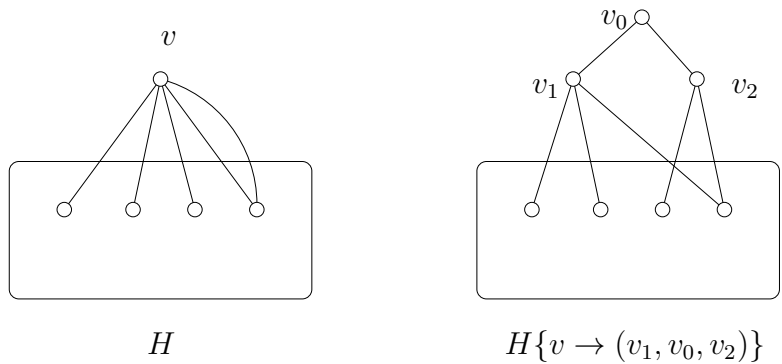, width=.75\textwidth}
      \caption{Bi-splitting a vertex in a matching covered graph}
      \label{fig:bisplitting}
    \end{center}
  \end{figure}

  \subsection{Solid bricks}
  A matching covered  graph is {\em solid} if every  separating cut of
  $G$  is a  tight  cut. 
  In particular, any  bipartite matching covered graph  is solid. That
  is, in a  bipartite matching covered graph, every  separating cut is
  also  a  tight  cut  (Corollary~\ref{cor:cuts-in-bipartite-graphs}).
  However, nonbipartite graphs, even  bricks, may have separating cuts
  which      are      not      tight.      (For      example,      see
  Figure~\ref{fig:sep-cuts}(b).)   {\em  Solid bricks}  are  precisely
  those bricks which are free of nontrivial separating cuts. It can be
  verified that  the graph shown in  Figure~\ref{fig:sep-cuts}(c) is a
  solid brick.
 The  following theorem  is  a consequence  of
  Corollary 2.26 in~\cite{clm02}.

  \begin{thm}\label{thm:solid-mc-graphs}
    A matching covered graph $G$ is solid if and only if each of its cut-contractions
with respect to any tight cut is also solid. (In particular, $G$ is solid if and only
each of its bricks is solid.)
  \end{thm}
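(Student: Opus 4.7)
The plan is to analyze how a separating cut $C=\partial(X)$ of $G$ interacts with a given tight cut $T=\partial(Y)$, whose cut-contractions I denote $G_1=G/Y$ (contraction vertex $y_1$) and $G_2=G/\overline{Y}$ (contraction vertex $y_2$). Two mechanisms drive the argument: tightness of $T$ gives a bijection between perfect matchings $M$ of $G$ and pairs $(M_1,M_2)$ of perfect matchings of $G_1$ and $G_2$ that share the single edge $t=M\cap T$; and Proposition~\ref{prp:sep-cut} lets me certify separating-ness one edge at a time.

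For the forward direction, assume $G$ is solid and let $C_1=\partial_{G_1}(X_1)$ be a separating cut of $G_1$, arranged so $y_1\notin X_1$. Then $X_1\subseteq\overline{Y}$ and $C_1$ coincides as an edge set with $C:=\partial_G(X_1)$. I would first verify that $C$ is separating in $G$: for each edge $e\in E(G)$ I produce a perfect matching $M\ni e$ with $|M\cap C|=1$ by invoking $C_1$-separation in $G_1$ and splicing through the edge of $T$ used by the witness, with either $M_1$ or $M_2$ playing the primary role according to whether $e$ lies in $E(G_1)$ or in $E(G[Y])$. Solidity of $G$ then forces $C$ tight, and tightness lifts back because every perfect matching of $G_1$ extends by splicing to a perfect matching of $G$ meeting $C=C_1$ exactly once. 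The argument for $G_2$ is symmetric.

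For the reverse direction, the case where $G$ has no nontrivial tight cut is vacuous, since the hypothesis applied to $\partial(v)$ just says that $G/\{v\}=G$ is solid. Otherwise fix a nontrivial tight cut $T$, assume both $G_1$ and $G_2$ are solid, and take a separating cut $C=\partial(X)$ of $G$. If, after possibly swapping $X$ with $\overline{X}$, one has $X\subseteq Y$ or $X\subseteq\overline{Y}$, then $C$ coincides with a cut of $G_2$ or $G_1$ respectively on the same edge set; the splicing argument in reverse shows that this cut is separating in the relevant $G_i$, hence tight by hypothesis, hence tight in $G$.

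The substantive case is when $X$ crosses $Y$, with all four intersections nonempty. Parity of the four odd shores $X,\overline{X},Y,\overline{Y}$ lets me assume, after possibly replacing $X$ by $\overline{X}$, that $|X\cap Y|$ and $|\overline{X}\cap\overline{Y}|$ are odd while $|X\cap\overline{Y}|$ and $|\overline{X}\cap Y|$ are even, so that $\partial(X\cap Y)$ and $\partial(X\cup Y)$ are also odd cuts. The key tool is the pointwise identity, valid for every perfect matching $M$:
\[
|M\cap\partial(X)|+|M\cap\partial(Y)|=|M\cap\partial(X\cap Y)|+|M\cap\partial(X\cup Y)|+2\,|M\cap E(X\cap\overline{Y},\overline{X}\cap Y)|.
\]
Since $|M\cap T|=1$ and each odd cut contributes at least one edge, any $M$ with $|M\cap C|=1$ must force $|M\cap\partial(X\cap Y)|=|M\cap\partial(X\cup Y)|=1$ and $M\cap E(X\cap\overline{Y},\overline{X}\cap Y)=\emptyset$. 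Using Proposition~\ref{prp:sep-cut} through $C$, this gives that $\partial(X\cap Y)$ and $\partial(X\cup Y)$ are separating in $G$, and that $E(X\cap\overline{Y},\overline{X}\cap Y)=\emptyset$ (any such edge lies in $C$ and its separating witness would violate the identity). Since $X\cap Y\subseteq Y$ and $\overline{X}\cap\overline{Y}\subseteq\overline{Y}$, the non-crossing subcase upgrades these two cuts from separating to tight in $G$; feeding tightness back into the identity gives $|M\cap C|=1$ for every $M$, so $C$ is tight. The parenthetical then follows by induction on a tight cut decomposition. The crossing case is the main obstacle, and the parity choice that makes both $\partial(X\cap Y)$ and $\partial(X\cup Y)$ odd cuts is what lets the submodular identity be closed off via the solidity of $G_1$ and $G_2$.
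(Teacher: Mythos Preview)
The paper does not actually prove Theorem~\ref{thm:solid-mc-graphs}; it is stated there as ``a consequence of Corollary~2.26 in~\cite{clm02}'' and is used without further justification. So there is no in-paper proof to compare against.

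Your argument is correct and self-contained. The forward direction (solidity passes to tight-cut contractions) is handled by the natural splicing through the unique edge of $M\cap T$, and your computation that $|M\cap C|=|M_1\cap C_1|$ when $X_1\subseteq\overline Y$ is exactly what is needed. For the reverse direction, your treatment is in fact slightly stronger than the theorem as literally stated: you show that if both $T$-contractions are solid for \emph{some} nontrivial tight cut $T$, then $G$ is solid. The key step is the crossing case, and your use of the submodular identity
\[
|M\cap\partial(X)|+|M\cap\partial(Y)|=|M\cap\partial(X\cap Y)|+|M\cap\partial(X\cup Y)|+2\,|M\cap E(X\cap\overline Y,\overline X\cap Y)|
\]
together with the parity normalisation making $|X\cap Y|$ and $|\overline X\cap\overline Y|$ odd is the standard and correct way to uncross $C$ against $T$: from a single witness $M$ with $|M\cap C|=1$ you get $|M\cap\partial(X\cap Y)|=|M\cap\partial(X\cup Y)|=1$ and $E(X\cap\overline Y,\overline X\cap Y)=\emptyset$, which feeds the two ``corner'' cuts into the already-established non-crossing subcase, and then the identity closes to give tightness of $C$.

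Two very minor remarks. First, your observation that the hypothesis applied to the trivial tight cut $\partial(v)$ already asserts $G/\{v\}=G$ is solid is correct but shows that, read literally, the reverse implication of the theorem is tautological; the substantive content (and what your crossing argument genuinely establishes) is the version restricted to nontrivial tight cuts, which is exactly what is needed for the parenthetical about bricks via induction on a tight cut decomposition. Second, in the non-crossing subcase you should note (as you implicitly use) that $\partial_G(X')\subseteq E(G_i)$ whenever $X'$ is contained in the corresponding shore, so restricting a perfect matching of $G$ to $G_i$ preserves the intersection count with that cut; this is what makes ``separating in $G$'' transfer to ``separating in $G_i$'' and, after invoking solidity of $G_i$, back to ``tight in $G$''.
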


  \medskip

  The notion  of solid matching  covered graphs was introduced 
  in~\cite{clm02} by  three of  us (CLM  -- Carvalho,  Lucchesi and
  Murty).  We  noted there  that certain  special properties  that are
  enjoyed by bipartite graphs are shared  by the more general class of
  solid  matching covered  graphs, and  exploited these  properties in
  establishing the validity of a conjecture due to Lov\'asz.

  \smallskip

  In  a  later  paper~\cite{clm04},  we (CLM)  showed  that  bipartite
  matching  covered graphs  and solid  near-bricks share  the property
  that their perfect  matching polytopes may be  defined without using
  the  odd  set  inequalities.   
  
%
%

  \smallskip
  The problem of recognizing solid bricks is in co-$\mathcal{NP}$,
  since any nontrivial separating cut serves as a certificate for demonstrating
  that a brick is nonsolid.
  In the same paper mentioned above, we (CLM) presented a proof of the
  following (unpublished) theorem due to two friends that provides
  another succinct certificate for demonstrating that a brick is nonsolid:

  \begin{thm}[B.~A.~Reed and Y.~Wakabayashi]
      \label{thm:usp}
    ~\\A brick $G$  has a nontrivial separating cut if  and only if it
      has  two disjoint  odd cycles  $C_1$ and  $C_2$ such  that $G  -
      (V(C_1) \cup V(C_2))$ has a perfect matching.~\qed
  \end{thm}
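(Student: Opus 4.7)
\noindent I would prove the two directions of the biconditional separately.

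\emph{Necessity.} Assume that $G$ is a brick admitting a nontrivial separating cut $C:=\partial(X)$. Since $G$ is a brick it has no nontrivial tight cut, so $C$ is not tight. By the contrapositive of Proposition~\ref{prp:bipartite-shore}, neither $G[X]$ nor $G[\overline{X}]$ is bipartite: if, say, $G[X]$ were bipartite then its majority part would be a barrier and $C$ would be a tight barrier cut. Consequently both shores contain odd cycles, and the task reduces to \emph{selecting} odd cycles $C_1\subseteq G[X]$ and $C_2\subseteq G[\overline{X}]$ whose deletion leaves a perfectly matchable graph. To drive the selection I would fix a perfect matching $M$ of $G$ with $|M\cap C|=1$ (exists by Proposition~\ref{prp:sep-cut}) and a perfect matching $M^{\ast}$ with $|M^{\ast}\cap C|\geq 3$ (exists because $C$ is separating but not tight). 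The symmetric difference $M\triangle M^{\ast}$ decomposes into $M$-alternating even cycles; since the numbers of cut edges in $M$ and $M^{\ast}$ differ, at least one of these alternating cycles $D$ crosses $C$ at least twice, and one can choose $D$ to cross $C$ in exactly two edges. Cutting $D$ at its two cut edges produces alternating paths $P_X\subseteq G[X]$ and $P_{\overline{X}}\subseteq G[\overline{X}]$ whose endpoints, together with the unique edge $\{x'y'\}=M\cap C$, carve out odd closed walks on each side of $C$; after extracting simple odd cycles $C_1,C_2$ from these walks, the $M$-edges disjoint from $V(C_1)\cup V(C_2)$ furnish the required perfect matching of $G-V(C_1)-V(C_2)$.

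\emph{Sufficiency.} Conversely, suppose $G$ contains disjoint odd cycles $C_1,C_2$ together with a perfect matching $N$ of $G-V(C_1)-V(C_2)$. I would exhibit a nontrivial separating cut; the natural candidate is $C:=\partial(V(C_1))$ (possibly replaced by $\partial(Y)$ for some $Y\supsetneq V(C_1)$ obtained by absorbing vertices along $N$-alternating excursions, should the first attempt fail). Nontriviality is immediate since $|V(C_1)|\geq 3$. To prove $C$ is separating I invoke Proposition~\ref{prp:sep-cut}: for each edge $e\in E(G)$ one must construct a perfect matching $M_e$ of $G$ with $e\in M_e$ and $|M_e\cap C|=1$. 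Such an $M_e$ is assembled from $N$, a near-perfect matching of $C_1$ missing a chosen vertex $v\in V(C_1)$, a near-perfect matching of $C_2$ missing a chosen vertex $u\in V(C_2)$, and either a direct edge $vu\in E(G)$ or an $N$-alternating $vu$-path through $V(G)\setminus (V(C_1)\cup V(C_2))$; the $3$-connectedness of the brick $G$ (Theorem~\ref{thm:elp}) guarantees the alternating paths needed. A case analysis on whether $e$ lies inside $G[V(C_1)]$, inside $G[V(G)\setminus V(C_1)]$, or in $C$ itself dictates how to tune the pair $(v,u)$ so that $e\in M_e$ while only the single selected edge of $M_e$ crosses $C$.

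\emph{Main obstacle.} The principal difficulty is in the necessity direction: producing odd cycles on both sides of $C$ is immediate, but arranging them so that their removal leaves a perfect matching requires a delicate alternating-cycle argument that pairs the excess $C$-crossings of $M^{\ast}$ (relative to $M$) with suitable odd closed walks in the two shores. Verifying that these walks genuinely contain simple odd cycles whose removal is compatible with the residual $M$-matching — in particular that the $M$-edges not incident to $V(C_1)\cup V(C_2)$ really do cover every remaining vertex exactly once — is where the bulk of the bookkeeping lies.
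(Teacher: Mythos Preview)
The paper does not prove Theorem~\ref{thm:usp}; it merely quotes the result (note the terminal \qed) and refers to \cite{clm04} for a proof. So there is no argument here to compare against, and I can only comment on the soundness of your outline.

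Both directions contain genuine gaps.

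\textbf{Sufficiency.} The cut $\partial(V(C_1))$ need not be separating. In the contraction $G/\overline{V(C_1)}$, for a cycle edge $v_iv_{i+1}$ to be admissible the contraction vertex $\overline{x}$ must be adjacent to one of a specific pair of cycle vertices (those whose removal from $C_1$ leaves a path whose unique perfect matching contains $v_iv_{i+1}$). Three-connectivity only guarantees that at least three vertices of $C_1$ have neighbours in $\overline{V(C_1)}$; if $C_1$ is a chordless $5$-cycle and those three vertices are $v_1,v_2,v_4$, then $v_1v_2$ is inadmissible in $G/\overline{V(C_1)}$. Nothing in your argument excludes this configuration. Your appeal to $3$-connectedness for the existence of $N$-alternating $vu$-paths is likewise unsupported: connectivity yields internally disjoint paths, not alternating ones. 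The hedge ``possibly replaced by $\partial(Y)$'' comes with no rule for choosing~$Y$.

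\textbf{Necessity.} The claim ``one can choose $D$ to cross $C$ in exactly two edges'' is unjustified: the components of $M\triangle M^{\ast}$ are fixed once $M,M^{\ast}$ are, and a single component may cross four or more times. More seriously, the assertion that the endpoints of $P_X$ together with the edge $x'y'$ ``carve out odd closed walks'' is not a construction. The ends of $P_X$ are the $X$-ends of the two removed cut-edges of $D$, which in general bear no relation to $x'$, so no closed walk arises. Even in the favourable case where one of the removed edges is $x'y'$ itself, a parity check shows that $P_X$ then has \emph{even} length (it starts with an $M^{\ast}$-edge at $x'$ and ends with an $M$-edge), so the promised odd cycle on the $X$-side does not materialise.

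For orientation, the proof in \cite{clm04} runs through the perfect-matching-polytope characterisation of solidity. For sufficiency, the half-integral vector with value $\tfrac12$ on $E(C_1)\cup E(C_2)$, value $1$ on $N$, and $0$ elsewhere satisfies all degree constraints but has weight $0$ on $\partial(V(C_1))$, so it violates an odd-set inequality and certifies that $G$ is nonsolid. For necessity one uses that every vertex of the fractional perfect-matching polytope is supported on a disjoint union of odd cycles together with a matching, and a short parity-and-exchange argument reduces the number of odd cycles to two.
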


  
  We  showed in \cite{clm06} that the  only simple planar
  solid  bricks  are  the  odd  wheels.   The  solid-brick-recognition
  problem remains unsolved for nonplanar graphs.

  \begin{prb} \label{prb:nonsolid}
    Characterize solid bricks.~ (Is the problem of deciding whether
    or not a given brick is solid in the complexity class ${\cal NP}$?
    Is it in~${\cal P}$?)
  \end{prb} 

  As  stated in  the  abstract,  the objective  of  this  paper is  to
  establish  a connection  between this  unsolved problem  and another
  basic  problem (Problem  \ref{prb:prism-based}) concerning  matching
  covered graphs.

  \smallskip

  A graph is {\em odd-intercyclic} if any two odd cycles in it have a
  vertex in common. (Odd wheels and M\"obius ladder of order
  $4k$,  $k\geq  1$,  are  examples of  odd-intercyclic  bricks.)   It
  follows from Theorem~\ref{thm:usp}  that every odd-intercyclic brick
  is solid.

  \smallskip

  Kawarabayashi  and  Ozeki~\cite{kaoz13}  showed that  an  internally
  $4$-connected  graph  $G$  is  odd-intercyclic if  and  only  if  it
  satisfies one  of the following  conditions: (i) $G-v$  is bipartite
  for some  $v\in V$, (ii)  $G-\{e_1,e_2,e_3\}$ is bipartite  for some
  three  edges $e_1,e_2$  and $e_3$  which constitute  the edges  of a
  triangle of~$G$, (iii)  $|V|\leq 5$, or (iv)~$G$ can  be embedded in
  the projective plane so that each face boundary has even length.

  \smallskip

  The  above   result  leads   to  a  polynomial-time   algorithm  for
  recognizing odd-intercyclic  bricks.  However, not all  solid bricks
  are odd-intercyclic; the graph shown in Figure~\ref{fig:sep-cuts}(c)
  is a solid brick which is not odd-intercyclic. We have not been able
  to find a cubic solid brick that is not odd-intercyclic.

  \begin{cnj}\label{cnj:cubic-solid-bricks}
    Every cubic solid brick is odd-intercyclic.
  \end{cnj}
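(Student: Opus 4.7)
The plan is to prove the contrapositive via the Reed--Wakabayashi criterion (Theorem~\ref{thm:usp}): if $G$ is a cubic brick that is not odd-intercyclic, then $G$ is not solid. Since $G$ is not odd-intercyclic, pick two vertex-disjoint odd cycles $C_1, C_2$ in $G$, chosen extremally (for example, minimizing $|V(C_1)|+|V(C_2)|$, or, subject to that, maximizing the number of edges between them). It would suffice, by Theorem~\ref{thm:usp}, to show that $H := G - (V(C_1) \cup V(C_2))$ has a perfect matching; failing that, one could try to modify the pair to produce a pair $D_1, D_2$ for which $G - (V(D_1) \cup V(D_2))$ does have a perfect matching.

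The second step is to attack $H$ with Tutte's theorem. Note that $|V(H)|$ is even since $G$ is cubic (so $|V(G)|$ is even) and $|V(C_1)|+|V(C_2)|$ is even. Assume for contradiction that $H$ has no perfect matching and choose $S \subseteq V(H)$ with $o(H-S) \geq |S|+2$. Set $T := S \cup V(C_1) \cup V(C_2)$, so that $G - T = H - S$. Two structural facts about $G$ drive the argument: first, $G$ is 3-edge-connected (being cubic and 3-connected), so every component $K$ of $G-T$ has $|\partial(V(K))|\geq 3$; second, by the ELP Theorem~\ref{thm:elp}, $G$ has no nontrivial barrier, so $T$ itself is not a barrier of $G$.

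The third step is a degree count. Every vertex of $V(C_i)$ contributes exactly one edge to $V(G)\setminus V(C_i)$, so the total number of edges from $V(C_1)\cup V(C_2)$ to $V(H)$ is $|V(C_1)|+|V(C_2)|-2\,e(C_1,C_2)$, where $e(C_1,C_2)$ denotes the number of edges between the two cycles. Summing $|\partial(V(K))|$ over the odd components $K$ of $G-T$ and comparing with the lower bound $3(|S|+2)$ gives $|V(C_1)|+|V(C_2)|\geq 6+2e(C_1,C_2)$, together with parity and bicriticality information about how the edges are distributed. In the extremal case, every odd component of $G-T$ is a single vertex of degree three whose three neighbours are spread across $S\cup V(C_1)\cup V(C_2)$, and the edges from $V(C_1)\cup V(C_2)$ to $H$ saturate the $K_j$'s.

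The main obstacle, as the above inequality shows, is this tight case: when $C_1$ and $C_2$ are short (say, two disjoint triangles with no edge between them) the naive counting allows $o(H-S)=|S|+2$ without contradiction. Breaking this equality requires finer structure; my proposal is to use the extremal choice of $(C_1,C_2)$ to construct, from the singleton odd components of $G-T$ together with fragments of $C_1, C_2$, an alternative pair of vertex-disjoint odd cycles $(D_1,D_2)$ that either contradicts the extremal choice or for which $G-(V(D_1)\cup V(D_2))$ admits a perfect matching directly. I expect that a full resolution will require Lov\'asz's classification of graphs with no two vertex-disjoint odd cycles, and possibly the Kawarabayashi--Ozeki refinement cited above, in order to pin down the restricted structure of cubic bricks that would have to occur in the tight case. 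The step that appears genuinely hard, and where the conjecture surely earns its status, is ruling out \emph{all} such tight configurations within a single cubic brick.
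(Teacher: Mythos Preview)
The statement you are attempting to prove is listed in the paper as a \emph{conjecture} (Conjecture~\ref{cnj:cubic-solid-bricks}), not a theorem; the authors explicitly write that they ``have not been able to find a cubic solid brick that is not odd-intercyclic'' and leave the assertion open. There is therefore no proof in the paper to compare your proposal against.

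As for the proposal itself, your own assessment is accurate: it is a plausible line of attack, not a proof. The reduction to Theorem~\ref{thm:usp} is the natural move, and the degree count you set up is correct --- in a cubic graph each vertex of $C_i$ sends exactly one edge off the cycle, so the bound $3(|S|+2)\le 3|S|+|V(C_1)|+|V(C_2)|-2e(C_1,C_2)$ is what one gets. But, as you note, the resulting inequality $|V(C_1)|+|V(C_2)|\ge 6+2e(C_1,C_2)$ is satisfied with equality already by two vertex-disjoint triangles with no edge between them, so the counting alone yields no contradiction. Your suggestion to repair this by swapping to a better pair $(D_1,D_2)$ built from the singleton odd components is reasonable heuristically, but you give no mechanism for producing such a pair, and invoking the Lov\'asz/Kawarabayashi--Ozeki structure theorems would at best describe what an odd-intercyclic graph looks like, not what a non-odd-intercyclic cubic brick must look like in the tight case. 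In short, the missing step you identify --- eliminating all tight configurations --- is exactly the content of the conjecture, and nothing in the proposal closes that gap.
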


We conclude this section by defining an important parameter related to each
nontrivial separating cut $C$ of a nonsolid brick $G$.
Since $G$ is free of nontrivial tight cuts, it follows
that some perfect matching $M$ meets $C$ in at least three edges. We define
the {\em characteristic of $C$} to be the minimum value of $|M \cap C|$, where
the minimum is taken over all perfect matchings $M$ of $G$ that meet $C$ in
at least three edges.
(In particular, the characteristic of a nontrivial separating cut is at least three.)

  \subsection{Removable Classes}
  \subsubsection{Removable edges and doubletons}
  Let $G$ be a matching covered graph and let $e$ and $f$ be two edges
  of  $G$.  We   say  that  $e$  {\em  depends}  on   $f$,  and  write
  $e\Rightarrow f$, if every perfect matching of $G$ that contains $e$
  also contains $f$. Edges $e$ and $f$ are {\em mutually dependent} if
  $e\Rightarrow   f$    and   $f\Rightarrow    e$,   and    we   write
  $e\Leftrightarrow f$ to signify this. It
  is easy to see that  $\Leftrightarrow$ is an equivalence relation on
  the  edge set  $E(G)$  of $G$.  In general,  the  cardinality of  an
  equivalence class may be  arbitrarily large. (For example, $C_{2k}$,
  the cycle  of length $2k$, has  two equivalence classes of  size $k$
  each.) However, in  a brick, an equivalence classes has  at most two
  edges (see Theorem~\ref{thm:eq-classes-in-bricks}).

  \smallskip

  The  relation  $\Rightarrow$  may  be visualized  by  means  of  the
  directed graph  on the edge  set $E(G)$ of  $G$, where there  is an arc
  with $e$  as tail and  $f$ as  head whenever $e\Rightarrow  f$. From
  this  digraph  we  obtain  a  new digraph,  denoted  by  $D(G)$,  by
  identifying     equivalence    classes     under    the     relation
  $\Leftrightarrow$.  Clearly  $D(G)$  is  acyclic. We  refer  to  the
  equivalence classes that correspond to the sources of $D(G)$ as {\em
    minimal classes}.  For any edge  $e$ of $G$,  a source $Q$  of $D$
  that contains an edge  $f$ that depends on $e$ is said  to be a {\em
    minimal class induced by} $e$. (Here we admit the possibility that
  $e$ and $f$ may be the same.)

  \smallskip

  If  $R$ is  a minimal  class of  $G$, then  every edge  of $G-R$  is
  admissible. Moreover, if  $G-R$ happens  to be  connected then  $G-R$ is
  matching covered; in this  case, we  shall say that  $R$ is  a {\em
    removable class}.

  \smallskip

  An edge  $e$ of  a matching  covered graph $G$  is a  {\em removable
    edge} if $G-e$ is matching covered,  and a pair $\{e,f\}$ of edges
  of $G$  is a  {\em removable  doubleton} if neither  $e$ nor  $f$ is
  individually  removable,  but  the  graph  $G-\{e,f\}$  is  matching
  covered. In the former case, $\{e\}$  is a minimal class, and in the
  latter, $\{e,f\}$ is a minimal class.

The  result below concerning braces will  prove to  be useful.

  \begin{thm}(\cite{clm99},~Lemma 3.2)\label{thm:braces}
    Every edge in a brace of order six or more is removable.\qed
  \end{thm}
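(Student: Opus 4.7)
The plan is to exploit two structural consequences of being a brace of order at least six, both derived by contradiction from the absence of nontrivial tight cuts: \textbf{(i)} every vertex has degree at least three, and \textbf{(ii)} any two disjoint edges lie in a common perfect matching (``$2$-extendability''). To see (i), suppose some $w \in A$ had $N(w) \subseteq \{x_1, x_2\} \subseteq A'$, where $(A,A')$ is the bipartition of $B$ with $n := |A| = |A'| \geq 3$. Then every perfect matching pairs $w$ with one of $x_1, x_2$ and forces the other to leave $\{w, x_1, x_2\}$ via exactly one edge, making $\partial(\{w, x_1, x_2\})$ a nontrivial tight cut (shores of sizes $3$ and $2n - 3 \geq 3$), contrary to bracehood. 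For (ii), the same reasoning shows that any $S \subseteq A$ with $1 \leq |S| \leq n - 2$ and $|N_B(S)| \in \{|S|, |S| + 1\}$ yields a nontrivial tight cut with shore $S \cup N_B(S) \cup \{u_0\}$ (for some $u_0 \in A' \setminus N_B(S)$) or $S \cup N_B(S)$ respectively; so in fact $|N_B(S)| \geq |S| + 2$ throughout this range, and Hall's theorem applied to $B - \{u_1, u_2, v_1, v_2\}$ then produces the required extension of any two disjoint edges $u_1v_1, u_2v_2$.

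Given (i) and (ii), fix an edge $e = uv$ with $u \in A$, $v \in A'$, and verify that $B - e$ is matching covered. Connectedness of $B - e$ follows because any bridge $e$ of $B$ would yield a tight cut $\{e\}$, which by bracehood forces one endpoint of $e$ to be pendant, contradicting (i). To show each edge $e' = u'v' \neq e$ is admissible in $B - e$: if $e$ and $e'$ share an endpoint, say $u' = u$ and hence $v' \neq v$, then any perfect matching of $B$ containing $e'$ matches $u$ to $v' \neq v$ and therefore omits $e$; the case $v' = v$ is symmetric. If instead $e, e'$ are disjoint, pick $v'' \in N_B(u) \setminus \{v, v'\}$ (available by (i), since $\deg(u) \geq 3$) and apply (ii) to the disjoint edges $uv''$ and $e'$ to obtain a perfect matching of $B$ containing both; since $u$ is matched to $v'' \neq v$, this matching again omits $e$. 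Taking any such $e'$ shows in particular that $B - e$ has a perfect matching, and combined with admissibility of every edge, $B - e$ is matching covered.

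The main obstacle is the clean verification of (i) and (ii), which requires identifying exactly which vertex-set configurations in a bipartite matching covered graph produce nontrivial tight cuts of odd order; the distinction between the two cases $|N_B(S)| = |S|$ and $|N_B(S)| = |S| + 1$ (and the need to adjoin an extra vertex $u_0$ in the former) is the subtle point. Once this classification is in hand, the case analysis on how $e'$ interacts with $e$ is entirely routine.
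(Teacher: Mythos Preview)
Your proof is correct. The paper does not actually supply a proof of this statement: it is quoted from \cite{clm99} and marked with a terminal \qed, so there is no in-paper argument to compare against.

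What you have written is essentially the standard route: establish that a brace of order at least six is $2$-extendable by showing the strengthened Hall condition $|N_B(S)| \geq |S| + 2$ for $1 \leq |S| \leq n-2$ (via the tight-cut contradictions you describe), then use minimum degree three and $2$-extendability to exhibit, for each edge $e' \neq e$, a perfect matching containing $e'$ but avoiding $e$. Your handling of the two neighbourhood cases $|N_B(S)| = |S|$ and $|N_B(S)| = |S|+1$ is accurate, including the need to adjoin an extra vertex $u_0$ in the former case to obtain an odd shore; and the case split on whether $e$ and $e'$ are adjacent or disjoint is clean. The connectedness argument (a bridge would give a trivial tight cut only if one end is pendant, contradicting (i)) is also fine.

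One minor remark: in your proof of (i), the case $\deg(w)=1$ is already excluded by matching-coveredness alone (the unique edge at $w$ would force every other edge at its neighbour to be inadmissible), so the tight-cut argument is really only needed for $\deg(w)=2$. This does not affect correctness.
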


  \subsubsection{Removable classes in bricks}
  \label{sec:rem-bricks}
  A matching  covered graph $G$  is {\em  near-bipartite} if it  has a
  removable doubleton~$R$  such $G-R$ is a  bipartite matching covered
  graph.

  \begin{thm}[\protect{\cite[Lemma~2.3]{clm99}},
      \protect{\cite[Lemma~3.4]{lova87}}] 
    \label{thm:eq-classes-in-bricks}
    Any equivalence class $R$ in a brick $G$ has cardinality at most two.
Moreover,   if  $|R|=2$,  say
    $R=\{e,f\}$, then $G-e-f$  is a bipartite graph, both
    ends of $e$ are in one part of the bipartition of $G-e-f$ and both
    ends of $f$ are in the other part.
  \end{thm}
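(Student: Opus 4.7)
The plan is to analyze the class $R$ using symmetric differences of perfect matchings. Assume $|R|\ge 2$ and pick distinct edges $e,f\in R$. Since $G$ is a brick, hence $3$-connected and matching covered, there is a perfect matching $M$ of $G$ containing $e$, and another perfect matching $N$ of $G$ avoiding $e$; by mutual dependence, $R\subseteq M$ and $R\cap N=\emptyset$. The symmetric difference $M\triangle N$ is a disjoint union of $M$-alternating even cycles, and a first key observation is that all edges of $R$ lie on a single cycle $C$ of $M\triangle N$: if $e_1,e_2\in R$ lay on distinct cycles $C_1,C_2$, then $M\triangle E(C_1)$ would be a perfect matching of $G$ containing $e_2$ but not $e_1$, contradicting $e_1\Leftrightarrow e_2$. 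Writing $C$ as $v_1v_2\cdots v_{2k}v_1$ with $M$-edges $v_{2i-1}v_{2i}$, each edge of $R$ is one of these $M$-edges.

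I would then prove the structural statement for any two edges $e,f\in R$: namely, $G-e-f$ is bipartite, with the two ends of $e$ in one part and those of $f$ in the other. The seed is the natural $2$-coloring of $V(C)$ obtained by viewing $C-e-f$ as a union of two even-length paths; tracing parities along these paths forces the two ends of $e$ to share one color and the two ends of $f$ to share the opposite color. Extend this $2$-coloring to $V(G)$ by following $M$-alternating paths starting at $V(C)$. The resulting coloring gives a proper bipartition of $G-e-f$ provided no edge $h$ of $G-e-f$ joins two vertices of the same color; but any such monochromatic $h$ would yield an $M$-alternating cycle of $G$ through $h$ that avoids both $e$ and $f$, and the symmetric-difference swap along that cycle would produce a perfect matching of $G$ containing $e$ but not $f$, contradicting $e\Leftrightarrow f$.

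Granting the structural statement, the bound $|R|\le 2$ follows by a consistency argument. Suppose a third edge $g\in R\setminus\{e,f\}$ existed. Applying the structural statement to $\{e,f\}$ gives a bipartition $(A,B)$ of $V(G)$ with the ends of $e$ in $A$ and the ends of $f$ in $B$; since $g\ne e,f$, it is an edge of the bipartite graph $G-e-f$, hence has one end in $A$ and one in $B$. But applying the same statement to $\{e,g\}$ forces both ends of $g$ into one part of the bipartition of $G-e-g$. Because $G-e-f-g$ remains sufficiently connected (thanks to the $3$-edge-connectivity of a brick) and both bipartitions must agree on the ends of $e$, the two bipartitions coincide, producing a contradiction.

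The main obstacle is the color-extension step: one must show that the seed $2$-coloring of $V(C)$ propagates consistently to all of $V(G)$ via $M$-alternating paths. The brick hypothesis is essential here — $3$-connectivity ensures that such paths reach every vertex of $V(G)$, while the absence of nontrivial barriers (Theorem~\ref{thm:elp}) is what rules out the parity inconsistencies that would otherwise allow the matching-swap argument to produce the forbidden perfect matching. Handling the potential disconnection of $G-e-f-g$ in the final consistency argument is a further, lower-level technicality that the $3$-edge-connectivity of bricks suffices to resolve.
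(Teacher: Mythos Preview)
The paper does not itself prove this theorem; it is quoted from \cite{clm99} and \cite{lova87}. The argument in those references is barrier-based and quite different from your outline. Since $e\Leftrightarrow f$, the edge $f$ is inadmissible in $G-e$, so Proposition~\ref{prp:inadmissible} furnishes a barrier $B$ of $G-e$ containing both ends of $f$. The brick hypotheses --- bicriticality and $3$-connectivity of $G$ --- are then used to force every component of $(G-e)-B$ to be a singleton; thus $A:=V(G)\setminus B$ is a stable set of $G-e$ of size $|V(G)|/2$ containing both ends of $e$. Interchanging the roles of $e$ and $f$ shows likewise that $B$ is stable in $G-f$, so $G-e-f$ is bipartite with parts $A,B$, and $e,f$ are the unique edges of $G$ spanning $A$ and $B$ respectively. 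The cardinality bound is then extracted from this structure. (The paper's own proof of Lemma~\ref{lem:v0matching} invokes exactly this barrier picture.)

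Your symmetric-difference approach contains a concrete error at its key step. You assert that a monochromatic edge $h$ yields an $M$-alternating cycle through $h$ that \emph{avoids both $e$ and $f$}, and that swapping $M$ along this cycle produces a perfect matching \emph{containing $e$ but not $f$}. These two claims are incompatible: if the cycle misses both $e$ and $f$, then $M$ and $M\triangle(\text{cycle})$ agree on $\{e,f\}$, so the swap cannot separate them. The contradiction you are after actually requires an $M$-alternating cycle through exactly one of $e,f$, and nothing in your construction supplies such a cycle. More fundamentally, the rule ``extend the $2$-colouring along $M$-alternating paths from $V(C)$'' is not well-defined as stated: distinct $M$-alternating walks from $V(C)$ to the same vertex may have opposite length parities, and ruling this out is tantamount to the very bipartiteness of $G-e-f$ you wish to prove. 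The barrier argument avoids this circularity by exhibiting the bipartition directly.
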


In particular, every removable class of a brick is either a removable edge
or is a removable doubleton. It follows from the above theorem that
every brick with a removable doubleton is indeed near-bipartite.
  Truncated  biwheels, prisms  of  order  $2\ ({\rm modulo\ }4)$,  M\"{o}bius
  ladders of  order $0\ ({\rm  modulo\ }4)$, and  staircases are  examples of
  near-bipartite  bricks.
  The  bicorn  (Figure~\ref{fig:four-bricks}(b))   has  two  removable
  doubletons,  and also a unique  removable edge.
The  two bricks  $K_4$ and  $\overline{C_6}$
  have three removable  doubletons each, but have  no removable edges;
the following was established by Lov{\'a}sz \cite{lova87}:

  \begin{thm}\label{removable-in-bricks}
    Every brick distinct from $K_4$ and $\overline{C_6}$ has a removable edge.
  \end{thm}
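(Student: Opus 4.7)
The plan is by contradiction: assume $G$ is a brick distinct from $K_4$ and $\overline{C_6}$ with no removable edge. Since the digraph $D(G)$ is finite and acyclic, it has a source, which is a minimal equivalence class $Q$ and, being a source, is a removable class. By Theorem~\ref{thm:eq-classes-in-bricks} we have $|Q|\le 2$, and by our assumption $|Q|\ne 1$, so $|Q|=2$. In other words, every minimal class of $G$ is a removable doubleton; fix one, $R=\{e,f\}$. The structural part of Theorem~\ref{thm:eq-classes-in-bricks} gives that $H:=G-e-f$ is bipartite with bipartition $(A,B)$, both ends of $e$ lying in $A$ and both ends of $f$ lying in $B$. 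In particular $G$ is near-bipartite.

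Next I would extract enough structure around $R$ to find a second minimal class disjoint from $R$. Using the brick property (Theorem~\ref{thm:elp}: $G$ is 3-connected and has no nontrivial barrier), each end of $e$ must have at least two neighbours in $B$, each end of $f$ at least two neighbours in $A$, and no subset of $A$ or of $B$ can be enlarged to a nontrivial barrier of $G$. Pick any edge $g\in E(H)$ and follow arcs of $D(G)$ backwards from $g$ to a source $R'$; such a source exists because $D(G)$ is a finite DAG, and one checks $R'\ne R$ by using the fact that $H$ itself is matching covered, so $g$ lies in a perfect matching of $H$ which contains neither $e$ nor $f$. By the first paragraph, $R'=\{e',f'\}$ is again a removable doubleton, and Theorem~\ref{thm:eq-classes-in-bricks} yields a second bipartition $(A',B')$ of $G-R'$ with the analogous placement of $e',f'$.

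Finally, I would take the common refinement
\[ V(G) = (A\cap A')\cup(A\cap B')\cup(B\cap A')\cup(B\cap B'), \]
and combine the two bipartite colourings. The requirement that both $G-R$ and $G-R'$ be bipartite with the specified parts allows edges only between certain prescribed pairs of these four refined classes, apart from the four exceptional edges in $R\cup R'$. A case analysis, exploiting 3-connectivity and repeatedly invoking the absence of nontrivial barriers, bounds the size of each refined class and forces $|V(G)|\le 6$; inspecting the (few) bricks of order at most six then forces $G\cong K_4$ or $G\cong \overline{C_6}$, contradicting the hypothesis.

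I expect the main obstacle to lie in this last step: extracting a sharp bound on $|V(G)|$ from the interaction of two crossing bipartitions is delicate and relies essentially on pushing the no-nontrivial-barrier property to its limit. A secondary subtlety is ensuring that the second minimal class $R'$ is genuinely disjoint from $R$, rather than $R$ itself in disguise; this is where the matching-coveredness of the bipartite graph $H$, together with the acyclicity of $D(G)$, must be used carefully.
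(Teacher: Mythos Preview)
The paper does not prove this theorem; it is stated and attributed to Lov\'asz~\cite{lova87}. So there is no in-paper argument to compare against. Lov\'asz's original proof proceeds through his theory of the matching lattice and is substantially more intricate than the sketch you give.

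Your opening is sound: in a brick with no removable edge every minimal class is a removable doubleton, and Theorem~\ref{thm:eq-classes-in-bricks} then forces the near-bipartite structure. But the argument breaks in two places.

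The minor one: your justification for $R'\ne R$ has the dependence direction reversed. The fact that $g$ lies in a perfect matching of $H=G-R$ shows $g\not\Rightarrow e$, not $e\not\Rightarrow g$. Following arcs of $D(G)$ \emph{backwards} from $[g]$ lands at a source $R'$ with a directed path $R'\to\cdots\to[g]$; for $R'=R$ this would mean $e\Rightarrow g$, and that is what you must rule out. (Easy repair: pick $g$ sharing an end with $e$; any perfect matching through $e$ avoids $g$, so $e\not\Rightarrow g$. Alternatively, note that a unique source would make $e\Rightarrow g$ for every $g$, which is absurd.)

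The serious gap is the final step. Your claim that two removable doubletons plus the brick axioms (3-connectivity, no nontrivial barrier) force $|V(G)|\le 6$ via the common refinement is false as stated: the bicorn is a brick on eight vertices possessing two removable doubletons (see the remark following Theorem~\ref{thm:eq-classes-in-bricks}). Every hypothesis you list for your case analysis holds in the bicorn, yet $|V|=8$. What separates the bicorn from a genuine counterexample to the theorem is that it \emph{also} has a removable edge --- precisely the hypothesis you discarded when passing to the refinement argument. Any proof along your lines must feed the full assumption ``no removable edge'' back into the four-class analysis, and you give no indication of how to do this; that is where the actual content of Lov\'asz's theorem lives, not a routine detail.
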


  There is an extensive discussion of removable edges in bricks in our
  paper \cite{clm12}.
We now present a technical  result which will
turn  out  to  be  useful  in  the proof  of  the  Main  Theorem (\ref{thm:four-nonsolid})
in Section~\ref{sec:conformal-minors}.

For a fixed vertex $v_0$ of a matching covered graph $G$,
a subset $M$ of the edges of $G$
is a {\em $v_0$-matching}
if $|M \cap \partial(v)|=1$ for each vertex $v$ distinct from $v_0$, and
if $|M \cap \partial(v_0)| > 1$.
A simple counting argument shows that $|M \cap \partial(v_0)|$ is odd.
The following may also be easily verified:

\begin{prp}
\label{prp:bipartite-v0-matching}
Let $G[A,B]$ be a bipartite graph such that $|A| = |B|$.
Then $G$ does not have a $v_0$-matching for any vertex $v_0$. \qed
\end{prp}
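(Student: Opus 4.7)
The plan is to give a short double-counting argument, working by contradiction. Suppose, for a contradiction, that $M$ is a $v_0$-matching in the bipartite graph $G[A,B]$, and without loss of generality take $v_0 \in A$; the case $v_0 \in B$ is symmetric with the roles of $A$ and $B$ swapped.

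First I would exploit the fact that every edge of $G$ has exactly one end in $A$ and one end in $B$, so summing $|M \cap \partial(v)|$ over the vertices of either colour class counts each edge of $M$ exactly once. Hence
\[
\sum_{v \in A} |M \cap \partial(v)| \;=\; |M| \;=\; \sum_{v \in B} |M \cap \partial(v)|.
\]
Next I would evaluate each side using the definition of a $v_0$-matching. On the $B$ side, every vertex contributes exactly $1$, so the sum equals $|B|$. On the $A$ side, every vertex other than $v_0$ contributes exactly $1$, while $v_0$ contributes $k := |M \cap \partial(v_0)|$, so the sum equals $(|A|-1) + k$.

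Equating the two expressions and invoking the hypothesis $|A|=|B|$, I get $(|A|-1) + k = |A|$, that is, $k = 1$. But the definition of a $v_0$-matching demands $k > 1$, and this contradiction completes the argument. There is no real obstacle here: the statement is essentially an accounting identity, and the only thing one has to be careful about is placing $v_0$ on the correct side of the bipartition, which is handled by the symmetry remark above.
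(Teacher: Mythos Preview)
Your double-counting argument is correct and is exactly the kind of routine verification the paper has in mind: the paper does not actually supply a proof but marks the proposition with an immediate \qed after noting it ``may also be easily verified.'' There is nothing to add or compare; your argument is the natural one.
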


  \begin{lem}\label{lem:v0matching}
    Let $G$ be a brick, $v_0$ be a vertex of $G$, and $M$ be a $v_0$-matching.
Let  $e$   be  an   edge  in
    $\partial(v_0)-M$ and let $Q$ be a minimal class of $G$ induced by~$e$.
Then, $Q$ is a singleton which is disjoint from $M$.
  \end{lem}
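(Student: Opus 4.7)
My plan is to prove the lemma in two stages: first to establish that $M \cap Q = \emptyset$, then to conclude that $|Q| = 1$.

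For the first stage I argue by contradiction. Suppose $q \in M \cap Q$. Because $Q$ is induced by $e$ there is an edge $f \in Q$ with $f \Rightarrow e$, and since $Q$ is an equivalence class, transitivity yields $q \Rightarrow e$. Let $N$ be a perfect matching of $G$ containing $q$; then $e \in N$ as well. I would construct a perfect matching $N'$ of $G$ with $q \in N'$ and $e \notin N'$, contradicting $q \Rightarrow e$. The construction uses $M \triangle N$: at $v_0$ the $k$ edges of $M \cap \partial(v_0)$ are disjoint from the unique $N$-edge $e$ at $v_0$ (since $e \notin M$), so $v_0$ has degree $k+1$ in $M \triangle N$, while every other vertex has degree $0$ or $2$. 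The alternating walk starting at $v_0$ along $e$ is forced at each subsequent vertex (degree $2$) and must eventually return to $v_0$; the resulting simple cycle $T$ alternates $N$- and $M$-edges and re-enters $v_0$ through some $f_j \in M \cap \partial(v_0)$. Since $q \in M \cap N$, both endpoints of $q$ have degree $0$ in $M \triangle N$, so they do not lie on $T$ and hence $q \notin T$. Then $N' := N \triangle T$ is a perfect matching with $q \in N \setminus T \subseteq N'$, and $e \in T \cap N$ so $e \notin N'$ --- the desired contradiction.

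For the second stage, $M \cap Q = \emptyset$ makes $M$ a $v_0$-matching of $G - Q$ (the edges of $M$ at $v_0$ are preserved in $G - Q$). Suppose for contradiction that $|Q| = 2$. By Theorem~\ref{thm:eq-classes-in-bricks}, $G - Q$ is bipartite with a prescribed bipartition $(A,B)$; since the brick $G$ is $3$-connected and therefore $3$-edge-connected, $G - Q$ remains connected; and because $Q$ is a minimal class, $G - Q$ is matching covered, forcing $|A| = |B|$. Proposition~\ref{prp:bipartite-v0-matching} now rules out the existence of the $v_0$-matching $M$ in $G - Q$, a contradiction. Hence $|Q| = 1$, and the lemma follows.

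The principal obstacle is the cycle-tracking argument in the first stage: one must verify carefully that following the alternating walk along $e$ indeed produces a simple cycle that closes at $v_0$ through some $f_j$, and that the endpoints of $q$ are absent from this cycle. Both points rest on the degree computations in $M \triangle N$ --- the degree-$2$ condition at non-$v_0$ vertices ensures that the walk is uniquely continued and cannot revisit a vertex, while the fact that $q \in M \cap N$ forces its endpoints to be isolated in $M \triangle N$.
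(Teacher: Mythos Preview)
Your proof is correct, and Stage~2 coincides with the paper's argument. Stage~1, however, proceeds along a genuinely different route. The paper argues structurally: for any $f\in Q$ with $f\neq e$, the relation $f\Rightarrow e$ means $f$ is inadmissible in $G-e$, so by Proposition~\ref{prp:inadmissible} there is a barrier $B$ of $G-e$ containing both ends of $f$; a parity count over the odd components of $G-e-B$ (using $e\notin M$) then forces every vertex of $B$ to be $M$-matched into an odd component, whence $f\notin M$. Your approach bypasses the barrier machinery entirely and works directly with the symmetric difference $M\triangle N$: the degree analysis (every vertex other than $v_0$ has degree~$0$ or~$2$, while $v_0$ has even degree $k+1\ge 4$) guarantees that the alternating walk from $v_0$ along $e$ closes into an even $N$-alternating cycle $T$ avoiding the isolated endpoints of $q$, and the swap $N':=N\triangle T$ yields the contradiction to $q\Rightarrow e$. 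Your method is more elementary in that it avoids the Tutte--Berge barrier characterization, at the cost of the cycle-tracking verification you flagged; the paper's method is shorter once Proposition~\ref{prp:inadmissible} is available and fits the surrounding toolkit more naturally. Both arrive at $Q\cap M=\emptyset$, after which the appeal to Theorem~\ref{thm:eq-classes-in-bricks} and Proposition~\ref{prp:bipartite-v0-matching} is identical.
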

  \begin{proof} If $e$ is the only member of $Q$ then there is nothing to prove.
Let $f$ be an edge of $Q$ such that
    $f\neq e$.  Then,  $f\Rightarrow e$ in $G$. As  $G-e$ has a perfect
    matching, and $f$ is inadmissible, by Proposition~\ref{prp:inadmissible},
     $G-e$ has  a barrier
    $B$ containing  both ends of  $f$, and $G-e-B$ has  exactly $|B|$
    odd components, two of which contain  the ends of $e$.
In particular, $v_0$  lies in  an odd component $K$  of
    $G-e-B$. 
    
    As  $|V(G)|$ is  even, $|M\cap\partial(v_0)|$ is  odd and
    so,    $|M\cap\partial(V(K))|$    is    also    odd.     Moreover,
    $|M\cap\partial(V(K'))|\ge  1$ for  any  other odd component $K'$  of
    $G-e-B$. By simple  counting, and taking into  account that $e\notin
    M$, we conclude that $|M\cap\partial(V(K'))|=1$, for each odd component
    $K'$ of $G-e-B$, and that each vertex of $B$ is matched by $M$ with a
    vertex in an odd component of $G-e-B$.  Thus, $f\notin M$. As $f$ is an
    arbitrary edge of $Q-\{e\}$, and since $e \notin M$,
we conclude that the minimal class $Q$ does not meet $M$.

It remains to argue that $|Q| = 1$.
By Theorem~\ref{thm:eq-classes-in-bricks}, $Q$ has at most two edges.     
Suppose that $|Q| = 2$.              By
    Theorem~\ref{thm:eq-classes-in-bricks},  $G-Q$  is  a  bipartite
    matching covered  graph.
Since $Q \cap M$ is empty,
$M$ is a $v_0$-matching of the bipartite graph $G-Q$,
and this contradicts Proposition~\ref{prp:bipartite-v0-matching}.
    Thus, $|Q|=1$.
  \end{proof}

%

  \subsubsection{Removable classes in solid graphs}
  Here we state some useful results regarding the properties and existence
of removable edges in solid graphs.

%
%
%


  \begin{thm}[\protect{\cite[Theorem~2.2.8]{clm02}}]
    \label{thm:solid-hereditary}
    For any removable edge $e$ of a solid matching covered graph~$G$,
the graph $G-e$ is also solid.
  \end{thm}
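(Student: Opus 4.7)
I would argue by contradiction. Suppose $G-e$ has a nontrivial separating cut $C:=\partial_{G-e}(X)$ that fails to be tight; then some perfect matching $M$ of $G-e$ satisfies $|M\cap C|\ge 3$. Since $e\notin M$, $M$ is also a perfect matching of $G$, and $|M\cap \partial_G(X)|=|M\cap C|\ge 3$, whether or not $e\in \partial_G(X)$. My plan is to establish that $\partial_G(X)$ is a separating cut of $G$: once this is shown, the solidity of $G$ forces $\partial_G(X)$ to be tight, contradicting $|M\cap\partial_G(X)|\ge 3$.

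To prove $\partial_G(X)$ is separating in $G$, I need to verify that both contractions $G/X$ and $G/\overline{X}$ are matching covered. Since $C$ is separating in $G-e$, both $(G-e)/X$ and $(G-e)/\overline{X}$ are matching covered. Each of $G/X$ and $G/\overline{X}$ differs from the corresponding $(G-e)/\cdot$ by at most one edge: namely the edge $e$ itself (when $e$ lies inside the opposite shore) or the contracted image $xq$ or $p\overline{x}$ of $e$ (when $e=pq$ crosses $C$, with $p\in X$ and $q\in \overline{X}$). Any edge already present in a contraction of $G-e$ remains admissible in the corresponding contraction of $G$, so it suffices to prove that this single possibly new edge is admissible in its contraction.

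For the admissibility step, I would first pick a perfect matching $N$ of $G$ containing $e$, which exists because $e$ is removable and hence admissible in $G$. If $N$ can be chosen with $|N\cap \partial_G(X)|=1$, then restricting $N$ to one shore and splicing with a perfect matching of the other shore (obtained by invoking the matching-coveredness of $(G-e)/X$ or $(G-e)/\overline{X}$ applied to a suitable crossing edge) yields the required perfect matching of the contraction that contains the new edge. The main obstacle is the residual configuration in which every perfect matching of $G$ through $e$ meets $\partial_G(X)$ in at least three edges; there the direct lifting fails. In that case I would use Proposition~\ref{prp:inadmissible} to extract a barrier $B$ of the relevant contraction that witnesses the inadmissibility of the new edge, and translate $B$ back into an obstruction in $G$---appealing to the solidity of $G$ and, if needed, to the Reed--Wakabayashi characterization of Theorem~\ref{thm:usp}---to rule out that configuration. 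This barrier-translation step is the heart of the argument and where the delicate case analysis will lie.
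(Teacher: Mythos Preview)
This theorem is quoted from \cite{clm02} and is not proved in the present paper, so there is no in-paper argument to compare against. I can therefore only comment on the internal soundness of your sketch.

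Your overall plan is the natural one: lift the non-tight separating cut $C=\partial_{G-e}(X)$ to the cut $\partial_G(X)$ in $G$, check that it is separating there, and then let the solidity of $G$ do the work. You are also right that, via Proposition~\ref{prp:sep-cut}, the only edge for which the required perfect matching does not lift automatically from $G-e$ is $e$ itself.

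The genuine gap is exactly the case you flag as ``residual'': when every perfect matching of $G$ through $e$ meets $\partial_G(X)$ in at least three edges, you do not actually produce the contradiction. Your barrier-translation idea does dispose of one sub-case cleanly. Say $e=uv$ with $u,v\in X$; if the barrier $B$ of $G/\overline{X}$ witnessing the inadmissibility of $e$ satisfies $\overline{x}\notin B$, then $B\subseteq X$ lifts verbatim to a barrier of $G$ containing the adjacent pair $u,v$, contradicting that $G$ is matching covered. But when $\overline{x}\in B$ the lift fails: replacing $\overline{x}$ by $\overline{X}$ destroys the count $o(G-B)=|B|$, and one has to do further work with the tight-cut structure that $B$ induces in $(G-e)/\overline{X}$, combined with the fact that $(G-e)/X$ is also matching covered. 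Your fallback to Theorem~\ref{thm:usp} is not convincing here: that result is stated for \emph{bricks}, whereas the hypothesis is only that $G$ is a solid matching covered graph, and the $C$-contractions are certainly not assumed to be bricks. So as written, the proposal stops precisely at the point where the real argument has to begin.
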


We now proceed to prove a result which we shall refer
to as the Lemma on Odd Wheels (\ref{lem:wheels}) which
 will play a crucial role in the proof of the main theorem
of Section~\ref{sec:conformal-minors}.
(A weaker version of this result appeared in \cite{clm02a}.)
The proof of this lemma relies on the following:

    \begin{thm}[\protect{\cite[Theorem~6.11]{clm12}}]
    \label{thm:solid-technical}
    Let $G$ be a  solid brick, let $v$ be a vertex of  $G$, let $n$ be
    the number of neighbours  of $v$, and let $d$ be  the degree of 
    $v$.  Enumerate the  $d$  edges of  $\partial(v)$ as  $e_i:=vv_i$,
    for 
    removable  $v$.  Enumerate  the  $d$  edges  of  $\partial(v)$  as
    $e_1,e_2,\ldots,e_d$,  where   $e_i$  joins  $v$  to   $v_i$,  for
    $i=1,2,\ldots,d$.   Assume   that  neither  $e_1$  nor   $e_2$  is
    removable in $G$.   Then, $n=3$ and, for $i=1,2$,  there exists an
    equipartition $(B_i,I_i)$ of $V(G)$ such that
    \begin{enumerate}
      \rmenum
    \item
      $e_i$ is the only edge of $G$ that has both ends in $I_i$,
    \item
      \label{item:incident-v3}
      every edge that  has both ends in $B_i$ is  incident with $v_3$,
      and
    \item
      the subgraph $H_i$ of $G$, obtained  by the removal of $e_i$ and
      each edge  having both  ends in $B_i$,  is matching  covered and
      bipartite, with bipartition $\{B_i,I_i\}$.
    \end{enumerate}
    Moreover,  $B_1=(I_2-v)   \cup  \{v_3\}$  and   $B_2=(I_1-v)  \cup
    \{v_3\}$. (See Figure~\ref{fig:B-I} for an illustration.)
  \end{thm}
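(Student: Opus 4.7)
The plan is to exploit the non-removability of $e_1$ and $e_2$ at $v$ to produce a near-bipartite decomposition on each side of $G$, and then to combine these two decompositions, using the solidity of $G$ (through Theorem~\ref{thm:usp}), to pin down both the number of neighbours of $v$ and the interlocking form of the two equipartitions.

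I would begin with $e_1$. Since $G$ is a brick and $e_1$ is non-removable, $G-e_1$ contains some inadmissible edge $f$; by Proposition~\ref{prp:inadmissible} there is a barrier $B$ of $G-e_1$ containing both ends of $f$. Because $G$ itself has no nontrivial barrier, $B$ fails to be a barrier of $G$ only because $e_1$ reunites two odd components of $(G-e_1)-B$. A counting argument then enlarges $B$ to an equipartition $(B_1,I_1)$: take $B$ as a seed and add to $B_1$ one distinguished vertex per odd component of $(G-e_1)-B$ outside the two components containing $v$ and $v_1$; set $I_1:=V(G)\setminus B_1$. The equality $|B_1|=|I_1|$ follows from the barrier condition together with the evenness of $|V(G)|$, and property~(i), that $e_1$ is the unique edge inside $I_1$, follows because every edge of $G-e_1$ outside $B_1$ must remain admissible: any additional internal edge of $I_1$ would, by Proposition~\ref{prp:inadmissible}, create a companion barrier of $G-e_1$ incompatible with the chosen one.

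For properties~(ii) and~(iii), I would invoke Theorem~\ref{thm:solid-hereditary} and the structural consequences of solidity: any edge inside $B_1$ not incident with a single distinguished vertex would, after removal of $e_1$, yield a separating cut of $G$ that fails to be tight, contradicting solidity. The distinguished vertex then turns out to be $v_3$, the third neighbour of $v$. Removing $e_1$ and all $v_3$-edges inside $B_1$ leaves a bipartite graph $H_1$ with bipartition $\{B_1,I_1\}$, whose matching-coveredness follows by applying Proposition~\ref{prp:sep-cut} to the barrier cut just produced. Performing the same analysis for $e_2$ yields $(B_2,I_2)$ with $v\in I_1\cap I_2$ and $v_i\in I_i$ for $i=1,2$. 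The identities $B_1=(I_2-v)\cup\{v_3\}$, $B_2=(I_1-v)\cup\{v_3\}$, and $n=3$ would then follow by comparing the two bipartite retracts: any mismatch between the two partitions, or the existence of a fourth neighbour of $v$, would let us produce two disjoint odd cycles $C_1,C_2$ in $G$ with $G-V(C_1)-V(C_2)$ admitting a perfect matching, violating Theorem~\ref{thm:usp}. The main obstacle is precisely this last compatibility step; the delicate bookkeeping that forces a common $v_3$ in both partitions and rules out further neighbours of $v$ is exactly where the solidity hypothesis does its essential work, and where I would expect the bulk of the proof to lie.
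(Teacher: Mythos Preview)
The paper does not contain a proof of this theorem; it is quoted verbatim from \cite{clm12} (Theorem~6.11 there) and used as a black box in the proof of the Lemma on Odd Wheels. There is therefore no ``paper's own proof'' to compare your proposal against.

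That said, a few comments on the proposal itself. Your opening move is the right one: non-removability of $e_i$ in a brick does yield a nontrivial barrier of $G-e_i$, and the maximal such barrier is the natural object to study. However, the passage from that barrier to the claimed equipartition $(B_i,I_i)$ is not as you describe it. You write ``add to $B_1$ one distinguished vertex per odd component of $(G-e_1)-B$ outside the two components containing $v$ and $v_1$''; this is neither the construction used in \cite{clm12} nor a construction that visibly yields an equipartition, and your justification of property~(i) (``any additional internal edge of $I_1$ would \ldots\ create a companion barrier \ldots\ incompatible with the chosen one'') is an assertion rather than an argument. In the actual development, one first shows that in a brick the odd components of $(G-e_1)-B_1$ (for $B_1$ the maximal barrier) are all singletons, so that $I_1:=V(G)\setminus B_1$ is automatically the other half of an equipartition and $e_1$ is the only edge spanned by $I_1$; this is where the brick hypothesis (no nontrivial barrier, $3$-connected) is used, not solidity.

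The genuinely hard part of the theorem --- that $n=3$, that all edges inside $B_i$ meet a single vertex $v_3$, and the interlocking formulas for $B_1,B_2$ --- is where solidity enters, and you are right to locate the difficulty there. But your sketch (``any mismatch \ldots\ would let us produce two disjoint odd cycles'') is only a hope, not a plan: you have not said how to locate the two odd cycles, nor why the complement would have a perfect matching. The argument in \cite{clm12} is considerably longer and more structural than what you outline, and the Reed--Wakabayashi criterion (Theorem~\ref{thm:usp}) is not the tool used there; rather, one works directly with separating cuts and the bipartite pieces $H_i$. As it stands, your proposal identifies the right objects but does not come close to a proof of the delicate conclusions.
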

    \begin{figure}[htb]
      \begin{center}
	\psfrag{B2}{$B_2$} 
	\psfrag{B1}{$B_1$} 
	\psfrag{I1}{$I_1$}
	\psfrag{I2}{$I_2$} 
	\psfrag{e1}{$e_1$} 
	\psfrag{e2}{$e_2$}
	\psfrag{v}{$v$}
	\psfrag{v1}{$v_1$}
	\psfrag{v2}{$v_2$}
	\psfrag{v3}{$v_3$}
	\psfrag{u}{$u$}
	\psfrag{w}{$w$}
	\psfrag{x}{$u$}
	\psfrag{y}{$w$}
	\epsfig{file=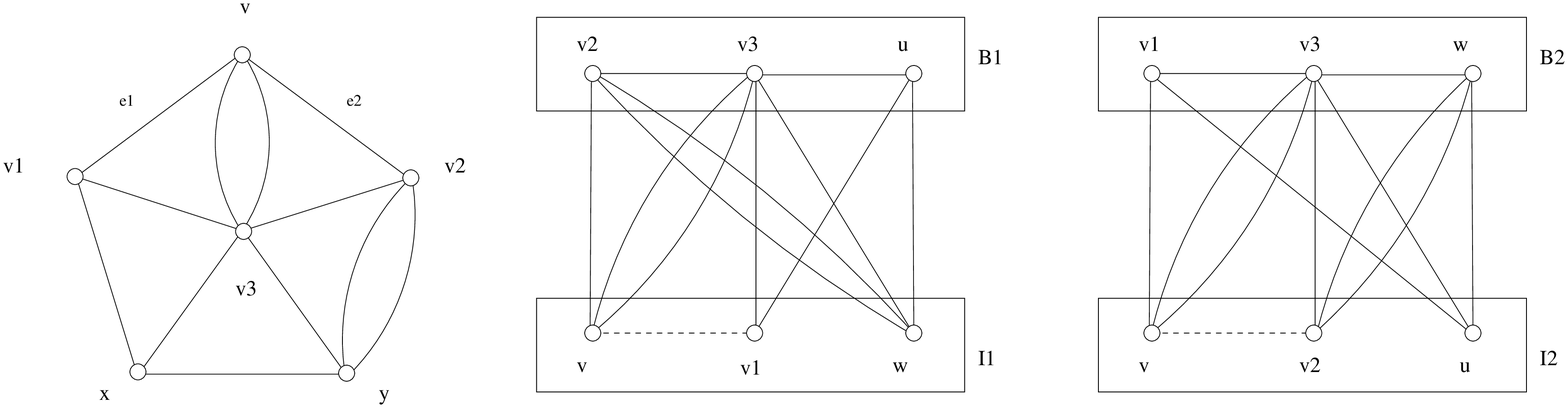, width=\textwidth}
	\caption{Graphs $G$, $G-e_1$ and $G-e_2$} 
	\label{fig:B-I}
      \end{center}
    \end{figure}

  We  say  that  $G$  is  a
  \emph{$v_0$-wheel} if  $G$ is a wheel  having $v_0$ as a  hub.

  \begin{lem}[Lemma on Odd Wheels]
    \label{lem:wheels}
    Let $G$ be a simple solid brick, $v_0$ be a vertex of $G$,
and $M_0$ be a  $v_0$-matching.
Then either $G$  is a $v_0$-wheel or $G$ has a
    removable edge $e \notin M_0 \cup \partial(v_0)$.
  \end{lem}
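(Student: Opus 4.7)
My plan is to argue by contradiction: assume $G$ is not a $v_0$-wheel and every removable edge of $G$ lies in $M_0\cup\partial(v_0)$, and derive a contradiction. The small cases are dispatched at the start: $K_4 = W_3$ is a $v_0$-wheel, and $\overline{C_6}$ is planar but not an odd wheel, so by~\cite{clm06} it is not solid. Hence $G\notin\{K_4,\overline{C_6}\}$, and Theorem~\ref{removable-in-bricks} supplies at least one removable edge of $G$.

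The central observation I would exploit is that two distinct admissible edges $f,e$ both incident to $v_0$ cannot satisfy $f\Rightarrow e$: since $f$ is admissible there is a perfect matching containing $f$, but any such matching saturates $v_0$ via $f$ and hence cannot also contain $e$. Now suppose first that $\partial(v_0)-M_0\neq\emptyset$ and pick $e\in\partial(v_0)-M_0$. Lemma~\ref{lem:v0matching} yields a singleton minimal class $\{f\}$ induced by $e$, with $f\notin M_0$, $f$ removable, and $f\Rightarrow e$. The standing assumption forces $f\in\partial(v_0)$ (since $f\notin M_0$), and the observation then forces $f=e$, so $e$ itself is a removable edge. Since $e$ was arbitrary, every edge of $\partial(v_0)-M_0$ is a removable edge of $G$. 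The degenerate case $\partial(v_0)\subseteq M_0$---in which $\deg(v_0)=|M_0\cap\partial(v_0)|$ is odd and at least three---needs separate handling, either by replacing $M_0$ with an alternative $v_0$-matching $M_0'$ having $\partial(v_0)-M_0'\neq\emptyset$ (obtained by exchanging two spokes $v_0u_i,v_0u_j\in M_0$ for an edge $u_iu_j$ when one is available) to reduce to the previous case, or by invoking Theorem~\ref{thm:solid-technical} directly at $v_0$.

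I would then apply the contrapositive of Theorem~\ref{thm:solid-technical}: any vertex of a simple solid brick with at least two non-removable incident edges has degree exactly three. So either $\deg(v_0)\geq 4$ and at most one edge of $\partial(v_0)$ is non-removable, in which case essentially all of $\partial(v_0)$ is removable; or $\deg(v_0)=3$, which combined with $|M_0\cap\partial(v_0)|\geq 3$ odd forces $\partial(v_0)\subseteq M_0$ and makes the explicit bipartite decomposition of Theorem~\ref{thm:solid-technical} available. Either regime gives strong local structural information about $G$ around $v_0$.

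The main obstacle is the final step: turning this local information into the global conclusion that $G$ is an odd wheel with hub $v_0$, contradicting the starting assumption. My plan here is induction on $|E(G)|$. Pick a removable edge $e\in\partial(v_0)-M_0$ produced above (or a removable edge in $M_0$ in the degenerate case). By Theorem~\ref{thm:solid-hereditary}, $G-e$ is solid; and since $e\notin M_0$ and $e\in\partial(v_0)$, the matching $M_0$ remains a $v_0$-matching of $G-e$. When $G-e$ is itself a brick, apply the inductive hypothesis directly; otherwise invoke the ELP Theorem~\ref{thm:elp} to locate a nontrivial tight cut in $G-e$ and apply induction to the brick arising in its tight-cut decomposition. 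The inductive output is either a removable edge of $G-e$ that transports---with some care about preservation of removability under the reinsertion of $e$---to a removable edge of $G$ outside $M_0\cup\partial(v_0)$, a contradiction; or an identification of $G-e$ as a smaller $v_0$-wheel, whence $G=(G-e)+e$ is forced to be an odd wheel centred at $v_0$, again contradicting the assumption. The technical crux is controlling removability across the deletion-reinsertion step (a removable edge of $G-e$ need not remain removable in $G$) and handling the non-brick case cleanly via the ELP decomposition, likely using the bipartite structure supplied by Theorem~\ref{thm:solid-technical}.
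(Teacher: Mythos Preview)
Your argument has a real gap at the inductive step, and the overall orientation works against you. You concentrate entirely on $v_0$: under the contradiction hypothesis you correctly deduce, via Lemma~\ref{lem:v0matching}, that every edge of $\partial(v_0)\setminus M_0$ is itself removable, and you then want to delete such an edge $e$ and recurse. But $G-e$ need not be a brick, so the lemma does not apply to it directly; passing to the brick of $G-e$ through tight cuts loses simultaneous control of $v_0$ and of $M_0$ (the $v_0$-matching does not obviously descend to a cut-contraction). Even when $G-e$ happens to be a brick and the inductive hypothesis supplies a removable edge $f\notin M_0\cup\partial(v_0)$ of $G-e$, you still need $f$ removable in $G$, i.e., $e$ admissible in $G-f$; this fails exactly when $e\Rightarrow f$ in $G$, and nothing you have written rules that out. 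You flag this yourself as ``the technical crux'', and it is not resolved. (The other inductive branch, ``$G-e$ is a $v_0$-wheel'', does not salvage things: since $e\in\partial(v_0)$ it would force a multiple edge in $G$, contradicting simplicity, so the induction always lands in the unresolved branch.)

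The paper's proof avoids induction entirely by applying Theorem~\ref{thm:solid-technical} \emph{away} from $v_0$. If some vertex $v\neq v_0$ has degree at least four, then (by simplicity and $|M_0\cap\partial(v)|=1$) at least two edges at $v$ miss $M_0\cup\partial(v_0)$, and Theorem~\ref{thm:solid-technical} makes one of them removable. If every $v\neq v_0$ has degree three and is adjacent to $v_0$, then $G-v_0$ is $2$-regular and, by $3$-connectivity, a single cycle, so $G$ is a $v_0$-wheel. Otherwise pick $v\neq v_0$ of degree three not adjacent to $v_0$; its two non-$M_0$ edges $e_1,e_2$ automatically lie outside $M_0\cup\partial(v_0)$, and if neither were removable then the bipartite structure provided by Theorem~\ref{thm:solid-technical} at $v$ would exhibit $M_0$ as a $v_0$-matching of a balanced bipartite graph, contradicting Proposition~\ref{prp:bipartite-v0-matching}. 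The missing idea in your approach is simply this change of viewpoint: Theorem~\ref{thm:solid-technical} is decisive at a vertex $v\neq v_0$, where two candidate edges outside $M_0\cup\partial(v_0)$ come for free.
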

  \begin{proof}
    \setcounter{cas}{0}
    \begin{cas}
      The brick $G$ has  a vertex $v \ne v_0$ that  has degree four or
      more in $G$.
    \end{cas}
    As $G$ is simple, at least two  edges, $e_1$ and $e_2$, are not in
    $M_0   \cup  \partial(v_0)$   but  are   incident  with   $v$.  By
    Theorem~\ref{thm:solid-technical},  one  of  $e_1$  and  $e_2$  is
    removable in $G$.

   \bigskip
    We may thus assume that every vertex $v \ne v_0$ has degree three
    in $G$.

    \begin{cas}
      Every vertex of $G-v_0$ is adjacent to $v_0$.
    \end{cas}
    Since every vertex $v \ne v_0$ has degree three in $G$ and is adjacent to $v_0$,
    every vertex distinct from $v_0$ has degree two in $G-v_0$.
    Then $G-v_0$ is a collection of cycles. By  the 3-connectivity of $G$,  it follows
    that $G-v_0$ is a cycle and, consequently, $G$  is a $v_0$-wheel.
    

    \begin{cas}
      The previous cases are not applicable.
    \end{cas}
    Every vertex $v \ne v_0$ of $G$ has degree three in $G$. Moreover,
    $G$ has a vertex, $v \ne v_0$,  that is not adjacent to $v_0$. Let
    $e_i:=vv_i$,  $i=1,2,3$,   be  the   three  edges   incident  with
    $v$. Adjust notation so that $e_3 \in M_0$.

    \begin{sta}
      One of the edges $e_1$ and $e_2$ is removable in $G$.
    \end{sta}
    \begin{proof}
      Assume the  contrary. By  Theorem~\ref{thm:solid-technical}, $G$
      has an  equipartition $(B_1,I_1)$  such that  $e_1$ is  the only
      edge having both  ends in $I_1$ and every edge  having both ends
      in $B_1$ is  incident with $v_3$. Moreover,  the bipartite graph
      $H$ obtained  from $G$  by the  removal of  $e_1$ and  each edge
      having both ends  in $B_1$ is matching covered.  Vertex $v_3$, a
      vertex adjacent to $v$, is  distinct from $v_0$. Thus, $v_3$ has
      degree three. As $H$ is  matching covered, precisely one edge of
      $\partial(v_3)$, say $f$,  has both ends in $B_1$.  But $e_3$ is
      the only edge of  $M_0$ incident with $v$ and its  end $v$ is in
      $I_1$. Thus, $f \not \in  M_0$.
      In particular, $M_0$ is a $v_0$-matching of $H$,
      and this contradicts Proposition~\ref{prp:bipartite-v0-matching}.
    \end{proof}

    The proof of the Lemma on Odd Wheels is complete.
  \end{proof}

  \subsection{Ear Decompositions}
  \subsubsection{Deletions and additions of ears}
  A path $P:=v_0v_1\dots  v_{\ell}$ of odd length in a  graph $G$ is a
  {\em single ear} \underline{in} $G$ if each of its internal vertices
  $v_1,v_2,\dots,v_{\ell-1}$  has degree  two  in $G$.   If $P_1$  and
  $P_2$ are two vertex-disjoint single ears in $G$, then $\{P_1,P_2\}$
  is a {\em  double ear} with $P_1$ and $P_2$  as its {\em constituent
    single ears}.   The {\em deletion}  of a  single ear $P$  from $G$
  consists of deleting all the internal vertices of
  $P$, and the  graph obtained by deleting $P$ from  $G$ is denoted by
  $G-P$. Likewise, the deletion of a double ear $\{P_1,P_2\}$ consists
  of deleting each of its constituent single ears $P_1$ and $P_2$.

  \smallskip

  A single ear $P$ in a  matching covered graph $G$ is {\em removable}
  if  the graph  $G-P$  obtained  by deleting  $P$  from  $G$ is  also
  matching covered.  If $P_1$ and  $P_2$ are two vertex-disjoint  single ears
  neither of which is removable,  but the graph \mbox{$G-P_1-P_2$} is matching covered,
  then  the double  ear  $\{P_1,P_2\}$ is  {\em  removable}. When  the
  length of  a single ear  is one, then we  identify it with  its only
  edge.

  \smallskip

  The  following basic  result  concerning  ear  decompositions  was  proved  by
  Lov\'asz and Plummer~\cite{lopl86}:

  \begin{thm}[{\sc The two-ear Theorem}]\label{thm:ear-decomp}
    ~ Given  any matching covered  graph $G$, there exists  a sequence
    $(G_1,G_2,\dots,G_r)$ of  matching covered  subgraphs of  $G$ such
    that:
    \begin{enumerate}
    \item
      $G_1=K_2$ and $G_r=G$; and
    \item
      for $2\leq i\leq r$, the  graph $G_{i-1}$ is obtained from $G_i$
      by  the deletion  of  either  a removable  single  ear  or of  a
      removable double ear.
    \end{enumerate}
  \end{thm}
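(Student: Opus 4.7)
The proof I would attempt is by induction on $|E(G)|$, with base case $G = K_2$ giving the trivial one-term sequence. For the inductive step, it suffices to show that every matching covered graph $G$ with $|E(G)| \geq 2$ admits a removable single ear or a removable double ear; deleting such an ear yields a matching covered graph $G'$ with strictly fewer edges, and appending $G$ to the sequence guaranteed for $G'$ by the induction hypothesis completes the construction. Thus the whole proof reduces to establishing this existence claim.

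If $G$ has a vertex of degree two, I would first extend it to a maximal single ear $P$ (the longest path through that vertex whose internal vertices all have degree two in $G$). If $G - P$ is matching covered, then $P$ is the desired removable single ear and we are done. Otherwise, $G - P$ contains an inadmissible edge $f$, and Proposition~\ref{prp:inadmissible} supplies a barrier $B$ of $G - P$ containing both ends of $f$. Since $G$ itself is matching covered, $B$ cannot be a barrier of $G$, so the ``failure'' of $B$ is caused by reinserting the internal vertices of $P$, and it is localizable: the odd components of $(G - P) - B$ containing the endpoints of $P$ must be matched across their boundary cuts in a constrained way, and tracing this parity constraint pins down a second maximal single ear $P'$ vertex-disjoint from $P$. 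A careful verification then shows that $\{P, P'\}$ is the required removable double ear.

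If instead every vertex of $G$ has degree at least three, every single ear has length one and the task is to find a removable edge or a removable doubleton in $G$. I would apply the tight cut decomposition of $G$: pick any brick or brace $J$ in the resulting list and produce in it a removable edge or doubleton, using Theorem~\ref{removable-in-bricks} if $J$ is a brick other than $K_4$ or $\overline{C_6}$, the explicit removable doubletons of $K_4$ and $\overline{C_6}$ if $J$ is one of those, or Theorem~\ref{thm:braces} if $J$ is a brace of order at least six (the only brace of order four, $K_{2,2}$, has vertices of degree two and falls under the previous case). The chosen removable edge or doubleton is then lifted through the sequence of tight cut contractions back to $G$, the lifting being justified at each step by Proposition~\ref{prp:sep-cut}.

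The principal obstacle is the companion-ear construction when $G - P$ fails to be matching covered: extracting $P'$ from the barrier structure of $G - P$ and verifying that simultaneous removal of $P$ and $P'$ preserves matching coveredness demands a careful case analysis of how the odd components of $(G-P)-B$ interact with the endpoints of $P$. The lifting step in the high-degree case, while technical, is largely mechanical once one tracks how perfect matchings behave under tight cut contractions.
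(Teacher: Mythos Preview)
The paper does not prove this theorem; it is quoted as a classical result of Lov\'asz and Plummer~\cite{lopl86} and used as a black box. So there is no ``paper's own proof'' to compare against, and your proposal must be judged on its own.

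Your outline has two substantive problems. First, the heart of the two-ear theorem is precisely the companion-ear construction you describe in the degree-two case, and you have not actually carried it out: you assert that ``tracing this parity constraint pins down a second maximal single ear $P'$'' and that ``a careful verification then shows that $\{P,P'\}$ is the required removable double ear,'' but neither the construction of $P'$ from the barrier $B$ nor the verification that $G-P-P'$ is matching covered is given. This is exactly the delicate part of the Lov\'asz--Plummer argument, and it does not fall out of a single application of Proposition~\ref{prp:inadmissible}; one must analyse how the endpoints of $P$ sit relative to the odd components of $(G-P)-B$, choose $P'$ accordingly, and then rule out any new barrier in $G-P-P'$. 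As written, this step is a promissory note rather than a proof.

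Second, your minimum-degree-three case is circular within the logical structure of the subject. You invoke Theorem~\ref{removable-in-bricks} (Lov\'asz's theorem that every brick other than $K_4$ and $\overline{C_6}$ has a removable edge) and Theorem~\ref{thm:braces}, but Lov\'asz's proof of the former in~\cite{lova87} already presupposes the ear-decomposition machinery you are trying to establish. There is also a loose end with braces on four vertices: you dismiss $K_{2,2}$ by saying it ``falls under the previous case,'' but a simple $C_4$ can appear as a brace in the tight cut decomposition of a graph $G$ with $\delta(G)\ge 3$ (the degree-two vertices would be contraction vertices, not vertices of $G$), and a simple $C_4$ has neither a removable edge nor a removable doubleton, so that piece of the decomposition gives you nothing to lift.
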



  \subsubsection{Conformal subgraphs}
  A matching covered subgraph $H$ of a matching covered graph $G$ is
  {\em   conformal}   if   the    graph   $G-V(H)$   has   a   perfect
  matching. It is easily seen that this notion obeys transitivity:

  \begin{prp}\label{prp:transitivity}
Any conformal subgraph of a conformal subgraph of a matching covered graph $G$
is also a conformal subgraph of $G$. \qed
  \end{prp}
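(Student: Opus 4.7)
The plan is to observe that conformality is essentially a statement about the existence of perfect matchings on the complementary vertex set, and that these matchings can be concatenated along the chain $H' \subseteq H \subseteq G$.

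First I would unpack the hypotheses. Let $H$ be a conformal subgraph of $G$ and let $H'$ be a conformal subgraph of $H$. By definition, $H$ and $H'$ are both matching covered, and there is a perfect matching $M_1$ of $G - V(H)$ and a perfect matching $M_2$ of $H - V(H')$. Since $V(H') \subseteq V(H) \subseteq V(G)$, the set $V(G) \setminus V(H')$ partitions as the disjoint union
\[
\bigl(V(G) \setminus V(H)\bigr) \;\cup\; \bigl(V(H) \setminus V(H')\bigr).
\]

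Next I would verify the two required properties. To show $H'$ is a matching covered subgraph of $G$: it is matching covered in its own right by hypothesis, and it is a subgraph of $G$ because $H$ is. To show $G - V(H')$ has a perfect matching: the edge set $M_1 \cup M_2$ is a subset of $E(G)$ (the edges of $M_2$ are edges of $H$, hence of $G$), and since $M_1$ and $M_2$ live on the two disjoint vertex sets displayed above, their union is a matching of $G$ that saturates every vertex of $V(G) \setminus V(H')$. Hence $M_1 \cup M_2$ is a perfect matching of $G - V(H')$, establishing that $H'$ is a conformal subgraph of $G$.

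There is no real obstacle here; the statement is essentially a direct consequence of gluing the two matchings on complementary vertex sets, and the only thing to be careful about is the disjointness of the vertex sets on which $M_1$ and $M_2$ are defined, which is immediate from $V(H') \subseteq V(H)$.
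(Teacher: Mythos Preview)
Your argument is correct and is exactly the straightforward verification the paper has in mind; the paper itself does not spell out a proof but simply records the proposition with a \qed, remarking that transitivity is ``easily seen.''
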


Conformal subgraphs have been referred to by various other
  names  (`nice'  sugraphs,   `central'  subgraphs  and  `well-fitted'
  subgraphs)  in  the  literature.  The following  result  is  due  to
  Lov\'asz and Plummer~\cite{lopl86}.

  \begin{thm}\label{thm:conformal}
    A matching covered subgraph  $H$ of  a matching  covered graph  $G$ is  conformal
    if  and only  if there is  some ear  decomposition ${\cal
      G}:=(G_1,G_2,\dots,G_r)$  of $G$  such that  $H$ is  one of  the
    graphs in ${\cal G}$.\qed
  \end{thm}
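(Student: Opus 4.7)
The plan is to handle the two directions of the biconditional separately.

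For the `if' direction, suppose $H = G_k$ in an ear decomposition $(G_1, G_2, \ldots, G_r = G)$. I will exhibit a perfect matching of $G - V(H)$ directly. Each step from $G_i$ to $G_{i+1}$ (with $k \leq i < r$) adjoins either a single ear $P = v_0 v_1 \cdots v_\ell$ of odd length $\ell$ or a double ear consisting of two such single ears. The $\ell - 1$ internal vertices of any such single ear (an even number) form a subpath of even order, admitting the canonical perfect matching $\{v_1 v_2, v_3 v_4, \ldots, v_{\ell-2} v_{\ell-1}\}$ using alternate edges of $P$ (vacuous when $\ell = 1$). Taking the union of these canonical matchings over every single ear adjoined from stage $k$ onward yields a perfect matching of $G - V(H)$, so $H$ is conformal in $G$.

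For the `only if' direction, let $H$ be conformal in $G$. I proceed by induction on the quantity $(|V(G)| - |V(H)|) + (|E(G)| - |E(H)|)$. The base case $H = G$ follows by applying the two-ear Theorem (\ref{thm:ear-decomp}) to $H$ itself. For the inductive step, I aim to produce a matching covered graph $G'$ with $H \subseteq G' \subsetneq G$ such that $G$ is obtained from $G'$ by adjoining either a removable single ear or a removable double ear whose internal vertices lie in $V(G) \setminus V(H)$ and whose edges lie in $E(G) \setminus E(H)$. Granted such $G'$, conformality of $H$ in $G'$ is maintained: combine any perfect matching of $G - V(H)$ with the canonical matching on the internal vertices of the removed ear (which are all outside $V(H)$) to obtain a perfect matching of $G' - V(H)$. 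Induction then supplies an ear decomposition of $G'$ passing through $H$, which extends to an ear decomposition of $G$ by appending the removed ear as the final step.

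The main obstacle is establishing the existence of such a $G'$, i.e., a removable ear of $G$ avoiding $H$. My strategy is to revisit the proof of the two-ear Theorem (\ref{thm:ear-decomp}) and run it starting from $H$ rather than from $K_2$: concretely, to show that any matching covered graph $G$ strictly containing a matching covered conformal subgraph $H$ admits a removable single or double ear whose internal vertices avoid $V(H)$. The key leverage is the fixed perfect matching $F$ of $G - V(H)$ guaranteed by conformality; any $F$-alternating path in $G \setminus E(H)$ that meets $V(H)$ only at its endpoints is a candidate ear built from vertices and edges outside $H$. A careful analysis, using the matching covered property of $G$ together with the alternating structure provided by $F$ on $G - V(H)$, should then guarantee that a removable such ear exists whenever $H \subsetneq G$.
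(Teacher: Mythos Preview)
The paper does not prove Theorem~\ref{thm:conformal}; it is quoted with a terminal \qed\ and attributed to Lov\'asz and Plummer, so there is no in-paper argument to compare against. Your `if' direction is correct. In the `only if' direction, however, the conformality-preservation step does not work as written. You claim one can ``combine'' a perfect matching of $G-V(H)$ with the canonical matching on the removed ear to get a perfect matching of $G'-V(H)$; but $G'-V(H)$ has \emph{fewer} vertices than $G-V(H)$, so what is actually needed is a \emph{restriction}, and that restriction is a perfect matching only when the chosen matching of $G-V(H)$ already pairs the ear's internal vertices canonically. If the ear's endpoints $v_0,v_\ell$ lie in $V(G')\setminus V(H)$ --- which your setup permits, since the ear is removable in $G$ and attached to $G'$, not to $H$ --- the matching may instead use $v_0v_1,\ldots,v_{\ell-1}v_\ell$, and then restriction leaves $v_0$ and $v_\ell$ unmatched in $G'-V(H)$.

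The fix is latent in your final paragraph but should be made the backbone of the argument: build \emph{up} from $H$ rather than tearing down from $G$. Fix the perfect matching $F$ of $G-V(H)$, extend it to a perfect matching of $G$, and locate an alternating single or double ear $P$ with both ends in $V(H)$; then $H+P$ is matching covered (this is precisely the inductive engine behind Theorem~\ref{thm:ear-decomp}, run from $H$ instead of $K_2$) and is conformal in $G$, as witnessed by $F$ minus its edges on $P$ --- and those edges \emph{are} the canonical ones, because the ends of $P$ are matched inside $H$ and hence not by $F$. Iterating yields the desired ear decomposition directly, and your outer tear-down induction becomes superfluous.
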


  It  follows from  the  above  theorem that  if  $H$  is a  conformal
  matching covered subgraph of a  matching covered graph $G$,
then $H$  can be obtained
  from $G$  by a sequence  of deletions  of removable ears  (single or
  double). But  the deletion of an  ear amounts to first  reducing that
  ear  to one of length  one by  means of  bi-contractions, and  then
  deleting the  only edge  of that ear.  This observation  implies the
  following:

  \begin{cor}\label{cor:conformal}
    A matching covered subgraph  $H$ of  a matching  covered graph  $G$ is conformal
    if  and  only  if  it   can  be  obtained  from  $G$  by
    bi-con\-trac\-tions  of  vertices  of  degree  two and deletions  of
    removable classes.\qed
  \end{cor}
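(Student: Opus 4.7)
The plan is to derive the corollary from Theorem~\ref{thm:conformal} together with the observation, made explicitly in the paragraph immediately preceding the corollary, that the deletion of a removable ear can be simulated by bi-contractions followed by a single deletion of a removable class: $(\ell-1)/2$ successive bi-contractions of internal degree-two vertices reduce a single ear of odd length $\ell$ to an ear of length one, whose lone edge can then be deleted as a removable edge; the analogous statement holds for a double ear.

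For the ($\Rightarrow$) direction, I would assume $H$ is conformal in $G$ and apply Theorem~\ref{thm:conformal} to obtain an ear decomposition $(G_1,\ldots,G_r)$ of $G$ with $H=G_i$ for some $i$, so that $H$ arises from $G$ by a sequence of $r-i$ ear deletions. I would then replace each such ear deletion by its factorisation described above. The surviving edge of a reduced single ear is removable in the contracted graph because the original ear was removable, and the surviving pair of edges of a reduced double ear forms a removable doubleton (removable because the original double ear was, yet neither edge is removable in isolation because neither constituent single ear was). Concatenating these factorisations across all the ear deletions produces the required sequence of bi-contractions and deletions of removable classes.

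For the ($\Leftarrow$) direction, I would reverse the given sequence to read from $H$ up to $G$: every removable-class deletion reverses to the addition of a removable single edge or doubleton, and every bi-contraction reverses to a bi-splitting. I would then group each maximal run of bi-splittings with the immediately following removable-class addition into a single ear-addition step of some odd length. The grouped sequence is an ear decomposition of $G$ in which $H$ appears, and another application of Theorem~\ref{thm:conformal} gives that $H$ is conformal in $G$.

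The main obstacle is justifying the grouping in the backward direction: one must verify that every bi-splitting can indeed be paired with a subsequent removable-class addition so that the groups partition the reversed sequence cleanly. A cleaner alternative, which sidesteps the grouping question altogether, is an induction on the length of the operation sequence using Proposition~\ref{prp:transitivity}: a removable-class deletion trivially yields a conformal subgraph, while a bi-contraction, interpreted via the tight cut $\partial(\{v_0,v_1,v_2\})$ and the correspondence between perfect matchings of $H'$ and those of $H'/X$, identifies the contracted graph with an isomorphic conformal subgraph of its predecessor by matching $v_0$ to one of its two neighbours.
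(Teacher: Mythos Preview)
Your forward ($\Rightarrow$) direction is correct and is exactly what the paper does: the paragraph immediately preceding the corollary explains that Theorem~\ref{thm:conformal} supplies an ear decomposition passing through $H$, and that each ear deletion factors as a run of bi-contractions followed by the deletion of a removable class. The paper then states the corollary without further argument, does not spell out the converse separately, and only ever uses the forward implication (in the passage on matching minors).

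Your ``cleaner alternative'' for the ($\Leftarrow$) direction, however, contains a real gap. The assertion that a bi-contraction ``identifies the contracted graph with an isomorphic conformal subgraph of its predecessor'' is false. Take $H'=C_6$ and bi-contract any vertex $v_0$: the result is $C_4$, yet $C_6$ contains no $4$-cycle whatsoever, conformal or otherwise. The bijection between perfect matchings of $H'$ and of $H'/X$ coming from the tight cut $\partial(X)$ is a correspondence of matchings, not an embedding of graphs; the contraction vertex inherits the neighbours of \emph{both} $v_1$ and $v_2$, and in general no single vertex of $H'$ has that neighbourhood. (In the $C_6$ example, ``matching $v_0$ to $v_1$'' and passing to $H'-v_0-v_1$ leaves a path on four vertices, not a $4$-cycle.) Consequently the inductive step fails at bi-contractions and Proposition~\ref{prp:transitivity} cannot be invoked. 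Your first approach --- reversing the sequence and grouping into ear additions --- is closer to a workable line; the obstacle you flag, that an arbitrary mixed sequence of bi-splittings and edge-additions need not parse into ears, is genuine and would have to be dealt with for a complete proof of the converse.
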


  Every  ear  decomposition  of  a bipartite  matching  covered  graph
  involves only single ear additions. However, in an ear decomposition
  of a nonbipartite matching covered graph  there must be at least one
  double  ear addition.  Lov\'asz established  the following  fundamental
  result concerning nonbipartite graphs:

  \begin{thm}[\cite{lova83}]
    \label{thm:canonical-ear-decomp}
    Every nonbipartite matching covered graph has an ear decomposition
    such that either  the third graph is a bi-subdivision  of $K_4$ or
    the fourth graph is a bi-subdivision of $\overline{C_6}$.
  \end{thm}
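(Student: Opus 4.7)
The plan is to invoke Lovász's basic theorem \cite{lova83}, stated in the abstract, that every nonbipartite matching covered graph $G$ contains, as a conformal subgraph, a bi-subdivision $H$ of $K_4$ or of $\overline{C_6}$. Given such an $H$, I would construct an ear decomposition of $H$ itself of length three (if $H$ is a bi-subdivision of $K_4$) or length four (if $H$ is a bi-subdivision of $\overline{C_6}$), starting with $H_1 = K_2$. Since $H$ is conformal in $G$, Theorem \ref{thm:conformal} yields an ear decomposition of $G$ in which $H$ appears as some term $G_i$; splicing the short decomposition of $H$ in front of the tail $G_{i+1}, \ldots, G_r$ then produces an ear decomposition of $G$ whose third or fourth term is $H$, which is the desired conclusion.

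The short decomposition when $H$ is a bi-subdivision of $K_4$ mirrors the one for $K_4$ itself ($K_2 \to C_4 \to K_4$, the last step adding the two diagonals as a double ear). Label the branch vertices $1, 2, 3, 4$ and the subdivision paths $P_{ij}$, fix any edge $e_0$ of $H$ (lying on $P_{ij}$ for some $i, j$), and choose $\{k, l\}$ so that $\{i, j, k, l\} = \{1, 2, 3, 4\}$. Set $H_1 := \{e_0\}$, and let $H_2$ be the bi-subdivision of the 4-cycle $ijkl$ in $K_4$: the transition from $H_1$ to $H_2$ is a single-ear addition, because the rest of this bi-subdivided cycle is a path of odd length from one endpoint of $e_0$ to the other, whose internal vertices have degree two in $H_2$. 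Finally, $H_3 := H$ is obtained from $H_2$ by adjoining the two remaining subdivision paths $P_{ik}$ and $P_{jl}$ as a double ear. Since perfect matchings of a bi-subdivision biject with those of the underlying graph, removability reduces to two finite checks in $K_4$ itself: $K_4 - ik$ and $K_4 - jl$ are not matching covered (the opposite diagonal becomes inadmissible), while $K_4 - \{ik, jl\} = C_4$ is matching covered.

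For $H$ a bi-subdivision of $\overline{C_6}$, an analogous four-step construction works, mirroring a specific ear decomposition of $\overline{C_6}$ itself. Label the two triangles of $\overline{C_6}$ as $u_1 u_2 u_3$ and $v_1 v_2 v_3$ with rails $u_i v_i$, and by symmetry assume $e_0$ lies on $P_{u_1 v_1}$. Define $H_2$ as the bi-subdivision of the 4-cycle $u_1 u_2 v_2 v_1$; let $H_3$ be $H_2$ together with the single ear $P_{u_1 u_3} \cup P_{u_3 v_3} \cup P_{v_3 v_1}$ (an odd length path from $u_1$ to $v_1$, internally disjoint from $H_2$); and set $H_4 := H$, obtained from $H_3$ by adjoining the double ear $\{P_{u_2 u_3}, P_{v_2 v_3}\}$. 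Removability of each step is verified by transporting the check to $\overline{C_6}$ itself and examining its four perfect matchings $\{u_1 v_1, u_2 v_2, u_3 v_3\}$, $\{u_1 v_1, u_2 u_3, v_2 v_3\}$, $\{u_2 v_2, u_1 u_3, v_1 v_3\}$, $\{u_3 v_3, u_1 u_2, v_1 v_2\}$.

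The main obstacle is of course the invocation of Lovász's underlying conformal-minor theorem, which is substantial in its own right; granted that theorem, the remaining work is the finite, essentially mechanical check that $K_4$ and $\overline{C_6}$ admit the short ear decompositions above, which then lifts to any bi-subdivision via the matching-preserving correspondence between a graph and its bi-subdivisions.
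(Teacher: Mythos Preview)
The paper does not prove this theorem; it is quoted from Lov\'asz~\cite{lova83} without proof. So there is no ``paper's own proof'' to compare against.

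That said, your proposal has a structural circularity relative to the paper's presentation. You invoke, as your starting point, the statement that every nonbipartite matching covered graph contains a conformal bi-subdivision of $K_4$ or of $\overline{C_6}$. But in this paper that statement is explicitly \emph{derived from} Theorem~\ref{thm:canonical-ear-decomp}: see the sentence immediately following the theorem, ``Theorem~\ref{thm:canonical-ear-decomp} implies that every nonbipartite matching covered graph is either $K_4$-based or is $\overline{C_6}$-based.'' Both formulations carry the same citation~\cite{lova83}, so you are reducing one form of Lov\'asz's theorem to another form of the same theorem. You acknowledge this yourself in your final paragraph; just be aware that within the logical flow of this paper, your argument assumes what it sets out to prove.

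If one grants the conformal-minor formulation as an independent input, then your derivation is correct: the explicit three-step and four-step ear decompositions of $K_4$ and $\overline{C_6}$ are valid (the removability checks you describe go through), they transport to any bi-subdivision via the bijection between perfect matchings, and splicing them onto the tail of an ear decomposition guaranteed by Theorem~\ref{thm:conformal} yields the desired conclusion. One small quibble: your phrase ``by symmetry assume $e_0$ lies on $P_{u_1v_1}$'' is not quite right, since the triangle edges and the rail edges of $\overline{C_6}$ lie in different orbits of its automorphism group; but since you are free to \emph{choose} the initial edge $e_0$, this causes no difficulty.
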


  This theorem gives rise to  natural questions which are described in
  terms  of special  types of minors of matching  covered graphs  which we  now
  proceed to discuss.

  \subsubsection{Conformal minors and matching minors}
  Let $G$ be a matching covered graph. A matching covered graph $J$ is
  a {\em conformal minor} of $G$  if some bi-subdivision $H$ of $J$ is
 a conformal  subgraph of  $G$. Figure~\ref{fig:minors-of-P-and-P+e}(a)
  shows that $K_4$ is a  conformal minor of $\mathbb{P}$, the Petersen
  graph.  It is not too difficult to show that $\overline{C_6}$ is not
  a conformal minor of $\mathbb{P}$. However, if $\mathbb{P}+e$ is any
  graph obtained  by adding  an edge $e$  to $\mathbb{P}$  joining two
  nonadjacent vertices, then $\overline{C_6}$  is a conformal minor of
  $\mathbb{P}+e$             as             illustrated             in
  Figure~\ref{fig:minors-of-P-and-P+e}(b),   and  also   of  $K_4\odot
  \mathbb{P}$              as              illustrated              in
  Figure~\ref{fig:minors-of-P-and-P+e}(c).

  \begin{figure}[!ht]
    \psfrag{a}{$(a)$}        \psfrag{b}{$(b)$}       \psfrag{c}{$(c)$}
    \psfrag{e}{$e$}
    \begin{center}
      \epsfig{file=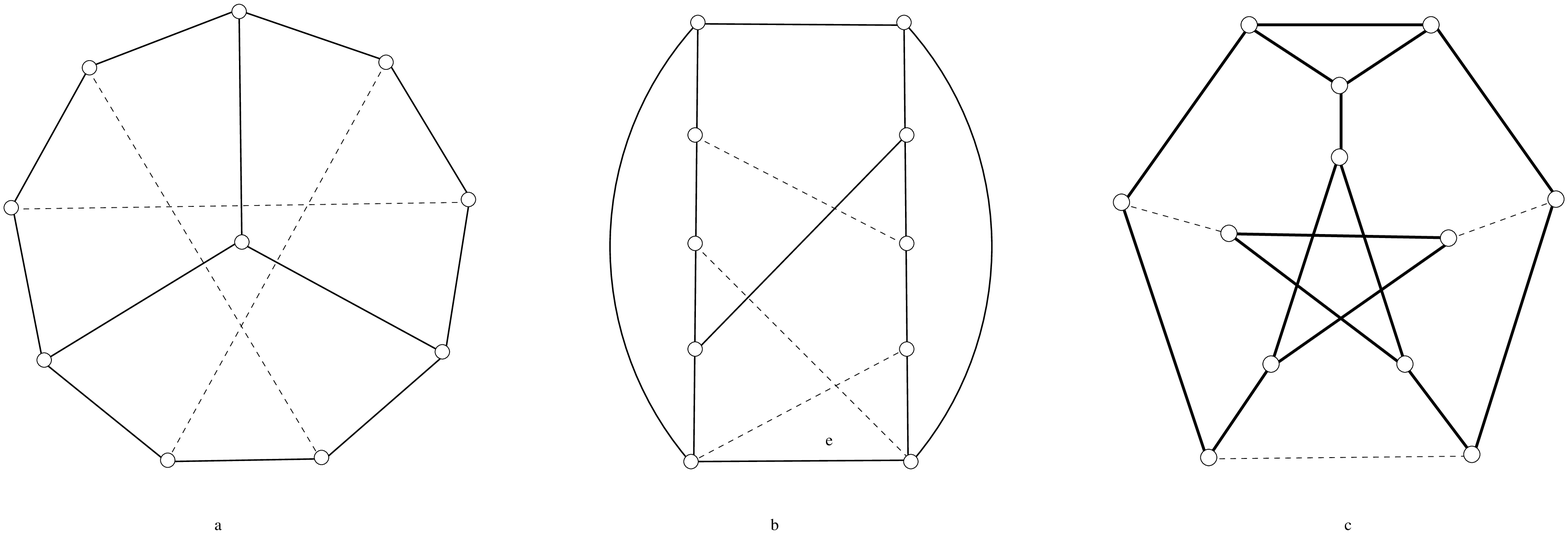, width=\textwidth}
      \caption{Conformal  minors:  (a)   $K_4$  of  $\mathbb{P}$;  (b)
	$\overline{C_6}$  of $\mathbb{P}+e$;  (c) $\overline{C_6}$  of
	$K_4\odot\mathbb{P}$}
      \label{fig:minors-of-P-and-P+e}
    \end{center}
  \end{figure}

  Given a  fixed matching covered graph  $J$, we say  that a
  matching covered graph $G$ is  {\em $J$-based} if $J$ is a
  conformal minor of  $G$, and, otherwise $G$ is  \mbox{\em $J$-free}.  For
  example,  the  Petersen graph  $\mathbb{P}$  is  $K_4$-based but  is
  $\overline{C_6}$-free,     and     $\mathbb{P}+e$    depicted     in
  Figure~\ref{fig:minors-of-P-and-P+e}(b)  is   both  $K_4$-based  and
  $\overline{C_6}$-based.
  
  \smallskip
  Theorem~\ref{thm:canonical-ear-decomp}
  implies  that every  nonbipartite matching  covered graph  is either
  $K_4$-based or is $\overline{C_6}$-based (or both),
  and it raises two natural problems: characterize those matching
  covered graph that are $K_4$-free, and those that are $\overline{C_6}$-free.
%
  Two of us (KM -- Kothari and Murty) showed  that it suffices to  solve these
  problems for  bricks  by establishing  the following  result
  concerning cubic bricks.

  \begin{thm}[\cite{komu16}]\label{thm:km-cubic}
    Suppose  that $J$  is a  cubic brick  and that  $C$ is a tight cut of a matching
    covered graph  $G$. Then $G$ is $J$-free if and only if each $C$-contraction
    of $G$ is $J$-free. (In particular, $G$ is $J$-free if and only if each brick
    of $G$ is $J$-free.)
  \end{thm}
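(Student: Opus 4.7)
The plan is to exploit two facts together: tightness of $C$ forces every perfect matching of $G$ to meet $C$ in exactly one edge, and cubicity of $J$ forces every vertex of any bi-subdivision of $J$ to have degree $2$ or $3$. I treat the two directions of the equivalence separately.

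For the ``if'' direction, I suppose without loss of generality that $G_1:=G/\overline{X}$ is $J$-based; let $H_1$ be a conformal bi-subdivision of $J$ in $G_1$ and $M_1$ a perfect matching of $G_1-V(H_1)$. The degree $d$ of the contraction vertex $\overline{x}$ in $H_1$ lies in $\{0,2,3\}$. If $d=0$, then $H_1\subseteq G$ directly, and conformality of $H_1$ in $G$ follows by combining $M_1$ with the restriction to $\overline{X}$ of any perfect matching of $G$ that uses the unique $C$-edge of $M_1$. If $d=2$, let $c_1,c_2\in C$ be the two edges of $H_1$ at $\overline{x}$, corresponding in $G$ to edges $w_1u_1$ and $w_2u_2$ with $w_i\in X$ and $u_i\in\overline{X}$; pick perfect matchings $M^{(1)},M^{(2)}$ of $G$ with $M^{(i)}\cap C=\{c_i\}$, which exist by tightness. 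In the symmetric difference $M^{(1)}\oplus M^{(2)}$ the alternating cycle containing $c_1$ must also contain $c_2$, and its portion inside $G[\overline{X}]$ is an alternating path $P$ from $u_1$ to $u_2$ of even length, by inspection of the alternation pattern at its endpoints. Substituting $P$ for $\overline{x}$ in $H_1$ yields a bi-subdivision $H$ of $J$ in $G$, whose conformality follows by combining $M_1$ on $X\setminus V(H_1)$ with $M^{(1)}$ restricted to $E(G[\overline{X}])\setminus E(P)$ on $\overline{X}\setminus V(P)$. The case $d=3$ is analogous but more delicate: I would use three perfect matchings $M^{(i)}$ of $G$, one for each of the three edges $c_1,c_2,c_3\in C$ at $\overline{x}$, and analyze their pairwise symmetric differences to extract a conformal subdivision of $K_{1,3}$ in $G[\overline{X}]$ with leaves $u_1,u_2,u_3$ and arms of even length, to be substituted for $\overline{x}$ in $H$.

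For the ``only if'' direction, let $H$ be a conformal bi-subdivision of $J$ in $G$. If $V(H)\subseteq X$, then by tightness every perfect matching $N$ of $G-V(H)$ uses exactly one $C$-edge $c=uw$ with $u\in\overline{X}$ and $w\in X\setminus V(H)$, and $(N\cap E(G[X]))\cup\{\overline{x}w\}$ is a perfect matching of $G_1-V(H)$; hence $H$ is conformal in $G_1$. The case $V(H)\subseteq\overline{X}$ is symmetric. Otherwise $V(H)$ meets both shores. Writing any perfect matching of $G$ as $M\cup N$ with $M$ a perfect matching of $H$ and $N$ of $G-V(H)$, tightness gives $|M\cap C|+|N\cap C|=1$, so $|M\cap C|$ is constant across perfect matchings $M$ of $H$ and equals $1$ (it cannot be $0$, else every edge of $E(H)\cap C$ would be inadmissible in $H$, contradicting matching-coveredness). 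Hence $\partial_H(V(H)\cap X)=E(H)\cap C$ is a tight cut of $H$. Since the retract of $H$ is the brick $J$, which has no nontrivial tight cut, one shore of this tight cut, say $V(H)\cap X$, must contain at most one branch vertex of $H$. Identifying all vertices of $V(H)\cap X$ with the contraction vertex $x$ of $G_2$ yields a subgraph $H'$ of $G_2$; a parity analysis of where the $C$-edges lie within the three subdivided edges incident to the contracted branch vertex shows that $H'$ is again a bi-subdivision of $J$. Conformality of $H'$ in $G_2$ follows because the restriction to $\overline{X}$ of any perfect matching $N$ of $G-V(H)$ (which uses no $C$-edge, as $|N\cap C|=0$) gives a perfect matching of $G_2-V(H')$.

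The main technical obstacle is the construction of the conformal tripod in the $d=3$ subcase of the ``if'' direction: the three arms must be chosen simultaneously to be vertex-disjoint (outside their common center), all of even length, and with a perfect matching on the complement in $\overline{X}$, which does not fall out of a single symmetric-difference argument. A secondary delicate point, used in the ``only if'' direction, is the structural claim that a nontrivial tight cut of a bi-subdivision of a brick has one shore containing at most one branch vertex; this rests on showing that the retract operation transports the tight-cut structure of $H$ into a tight cut of $J$, which by brickness must be trivial.
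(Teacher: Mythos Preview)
The paper does not prove this theorem; it is quoted from \cite{komu16} as background, so there is no in-paper argument to compare against. Your outline is along the right lines, and the cases $d\in\{0,2\}$ of the forward direction, as well as the easier parts of the converse, are handled correctly. The two points you yourself flag as obstacles are, however, genuine gaps rather than routine omissions.

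For $d=3$, pairwise symmetric differences $M^{(i)}\oplus M^{(j)}$ give three alternating paths in $G[\overline{X}]$ joining pairs among $u_1,u_2,u_3$, but nothing forces these paths to be internally disjoint or to share a common vertex, so they do not in general assemble into a tripod; local surgery on their intersections does not obviously preserve conformality either. What is really required is a lemma asserting that $G_2$ contains a conformal bi-subdivision of the theta multigraph (two vertices joined by three parallel edges) with $x$ as one branch vertex and with precisely $c_1,c_2,c_3$ as its edges at $x$. This does not fall out of the three matchings alone and needs its own argument.

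For the converse, knowing that one shore of the induced tight cut of $H$ contains at most one branch vertex is not yet enough to conclude that $H':=H/(V(H)\cap X)$ is again a bi-subdivision of $J$: you also need $|E(H)\cap C|\le 3$, and when equality holds, that the three cut edges lie on three \emph{distinct} bi-subdivided edges of $J$ with the correct residual parities. These structural facts about tight cuts in a bi-subdivision of a brick are true, but the one-line appeal to the retract does not establish them.
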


  The  restriction that  $J$  be  a cubic  brick  is  crucial for  the
  validity of  the above statement.  (Curiously, it is not  valid even
  for  cubic braces.  For example,  consider the  graph $G:=K_{4}\odot
  K_{3,3}$. If $C$ denotes the unique nontrivial tight cut in $G$, one
  of  the $C$-contractions  of $G$  is the  brace $K_{3,3}$.  However,
  $K_{3,3}$ is not a conformal minor of $G$!)
  

\smallskip
In light of Theorem~\ref{thm:km-cubic}, it suffices to solve the
following problems:

\begin{prb}\label{prb:K4-based}
    Characterize $K_4$-free bricks.
  \end{prb}

  \begin{prb}\label{prb:prism-based}
    Characterize $\overline{C_6}$-free bricks.
  \end{prb}

  Using the brick generation theorem of Norine and  Thomas,
  which  will be described  later on,  we (KM)        were        able        to        resolve
  Problems \ref{prb:K4-based}~and~\ref{prb:prism-based} in the special
  case  of planar  bricks  by  proving the  following  results. (By  a
  well-known  theorem of  Whitney (1933),  every simple  $3$-connected
  planar graph has  a unique embedding in the  plane (Theorem~10.28 in
  \cite{bomu08}.)

  \begin{thm}[{\cite{komu16}}]\label{thm:K4-free-planar}
    A  simple planar  brick is  $K_4$-free if  and only  if
its (unique) planar embedding has precisely two odd faces.
  \end{thm}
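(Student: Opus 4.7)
The plan is to prove the two directions separately. The implication ``if $G$ has exactly two odd faces, then $G$ is $K_4$-free'' admits a clean Euler-characteristic argument, whereas the converse requires an inductive structural analysis.

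\textbf{$K_4$-based implies at least four odd faces.} I will prove this as the contrapositive of one direction. Suppose $H$ is a conformal bi-subdivision of $K_4$ in $G$. Since $G$ is planar and $H \subseteq G$, the subgraph $H$ inherits a planar embedding from $G$; and because $K_4$ is $3$-connected with a unique planar embedding (Whitney's theorem), any planar embedding of $H$ has exactly four facial cycles $D_1, D_2, D_3, D_4$ corresponding to the four triangles of $K_4$ after bi-subdivision. Each $|D_i|$ is a sum of three odd numbers, hence odd. The cycles $D_1, \dots, D_4$ partition the plane into four regions $R_1, \dots, R_4$. For each $i$, a standard double-count yields $\sum_{F \subseteq R_i} |F| = |D_i| + 2 m_i$, where the sum ranges over $G$-faces contained in $R_i$ and $m_i$ is the number of $G$-edges strictly interior to $R_i$. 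Since $|D_i|$ is odd, the number of odd-length $G$-faces inside $R_i$ is odd, hence at least one; summing over the four regions shows that $G$ has at least four odd faces.

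\textbf{$K_4$-free implies exactly two odd faces.} Since bricks are nonbipartite and $2$-connected, $G$ has at least two odd faces, and their total is always even (as $\sum_F |F|=2|E(G)|$), so the task is to rule out four or more. I proceed by induction on $|E(G)|$, with base cases $K_4$ (four triangular faces, $K_4$-based, so vacuously covered) and $\overline{C_6}$ (two triangular and three quadrilateral faces, $K_4$-free). For the inductive step, Theorem~\ref{removable-in-bricks} gives a removable edge $e$. Removing $e$ merges its two incident faces $F_1, F_2$ into one of length $|F_1| + |F_2| - 2$, so the number of odd faces either stays the same (if $F_1, F_2$ have different parity, or both even) or drops by exactly two (if both are odd). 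Assuming $G$ has at least four odd faces, one aims to choose $e$ so that $G - e$ still has at least four odd faces, then pass to the tight-cut decomposition of $G - e$: by Theorem~\ref{thm:km-cubic}, since $K_4$ is a cubic brick, one of the resulting bricks retains the four-odd-face property, and by induction that brick is $K_4$-based, lifting back to $G$ via Proposition~\ref{prp:transitivity}.

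The main obstacle is the second direction: one must select a removable edge $e$ that is not incident to two odd faces, and control how the planar face parities behave through the subsequent bi-contractions of degree-two vertices (each of which flips the parity of its two incident faces) needed to recover simple planar bricks from the tight-cut decomposition. When every removable edge of $G$ happens to be incident to two odd faces, this direct reduction fails, and a finer argument is required; I anticipate that the brick-generation theorem of Norine and Thomas, referenced later in the paper, provides the systematic route: reduce to a finite list of basic planar bricks (odd wheels, prisms, truncated biwheels, staircases, and suitable M\"obius ladders), verify the statement on each, and then check that each generating operation preserves both $K_4$-freeness and the count of odd faces.
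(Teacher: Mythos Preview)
The paper does not contain a proof of this theorem: it is quoted from \cite{komu16} and stated without argument. So there is no ``paper's own proof'' to compare against here; what follows is an assessment of your proposal on its own terms.

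Your first direction (if $G$ is $K_4$-based then it has at least four odd faces) is sound. The key points --- that a bi-subdivision $H$ of $K_4$ inherits a plane embedding from $G$ with exactly four odd facial cycles, and that the parity identity $\sum_{F\subseteq R_i}|F|=|D_i|+2m_i$ forces at least one odd $G$-face in each region $R_i$ --- are correct and cleanly stated. Note that conformality of $H$ plays no role in this direction; you only use that $H$ is a plane subgraph of $G$.

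Your second direction, however, is not a proof but an outline with two substantial gaps that you yourself flag. First, the reduction via a removable edge $e$ hinges on the unproven claim that, after passing to the tight cut decomposition of $G-e$, ``one of the resulting bricks retains the four-odd-face property''. Tight-cut contractions in a plane graph do preserve planarity, but there is no a priori reason the odd faces should concentrate in a single brick; you would need a genuine lemma relating odd-face counts across a tight cut, and none is supplied. Moreover the bricks arising from the decomposition need not be simple, so the induction hypothesis (which is stated for simple planar bricks) does not apply directly. Second, your fallback --- invoking the Norine--Thomas generation theorem, checking the base families, and verifying that each expansion operation preserves the equivalence --- is a plausible strategy (and is indeed closer to how \cite{komu16} proceeds), but you have not carried out any of it: the verification that each of the four expansion operations preserves ``$K_4$-free $\Leftrightarrow$ exactly two odd faces'' is the real content, and it is entirely absent. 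As written, the second half is a plan rather than an argument.
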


  \begin{thm}[\cite{komu16}]
    \label{thm:prism-free-planar}
    The only  simple planar $\overline{C_6}$-free bricks are  the odd
    wheels, staircases of order $4k$, and the tricorn.
  \end{thm}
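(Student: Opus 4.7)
The plan is to prove the two directions separately, with the Norine and Thomas brick generation theorem (alluded to in the text just before Theorem~\ref{thm:K4-free-planar}) doing most of the work for the converse.

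For the forward direction, I would verify case by case that each of the three named families is $\overline{C_6}$-free. The key observation is that any bi-subdivision of $\overline{C_6}$ contains two vertex-disjoint odd cycles (the bi-subdivisions of its two triangles). For an odd wheel $W_k$, every odd cycle must contain the hub since the rim is an even cycle; hence $W_k$ is odd-intercyclic, so it cannot contain two vertex-disjoint odd cycles and is therefore $\overline{C_6}$-free. For a staircase of order $4k$, I would exploit near-bipartiteness (stated in the text): such a staircase has a removable doubleton $R$ whose removal yields a bipartite matching-covered graph by Theorem~\ref{thm:eq-classes-in-bricks}, so any conformal bi-subdivision of $\overline{C_6}$ must use both edges of $R$; I would then show that the rigid ladder layout forces the two bi-subdivided triangles to overlap, a contradiction. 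For the tricorn, the same near-bipartite strategy applies, and the claim can alternatively be confirmed by direct inspection of its $10$-vertex structure.

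For the converse, given a simple planar $\overline{C_6}$-free brick $G$, I would induct on $|V(G)|$ using the Norine and Thomas brick generation theorem, which builds every simple brick from a short list of base bricks via specific expansion operations. Since $G$ is planar and $3$-connected, Whitney's theorem supplies a unique planar embedding, and I would track face parities and positions through the reverse-generation process. At each inductive step the dichotomy to establish is: either the smaller brick produced is already in one of the three families, or else the expansion used to obtain $G$ has to be of a restricted form that keeps $G$ inside the same family; any other expansion either destroys planarity or introduces $\overline{C_6}$ as a conformal minor. The base cases are the small odd wheels $W_3 \cong K_4$, $W_5$, the tricorn, and the smallest staircases of order $8, 12$.

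The main obstacle is the converse, and specifically the verification that every admissible expansion applied to a brick in one of the three families either lands in the same family or creates $\overline{C_6}$ as a conformal minor. An expansion can add many new odd cycles, and controlling them requires careful use of the planar embedding together with the removable-doubleton structure exploited in the forward direction. Here the companion Theorem~\ref{thm:K4-free-planar}, which cleanly characterizes simple planar $K_4$-free bricks in terms of exactly two odd faces, provides a template for how face-parity invariants behave under the Norine and Thomas operations, and I expect an analogous face-based invariant to determine which of odd wheels, staircases of order $4k$, or the tricorn is reached.
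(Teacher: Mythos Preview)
This theorem is not proved in the present paper; it is quoted from \cite{komu16} (Kothari--Murty) and stated here without proof. The only hint the paper gives is the sentence preceding Theorem~\ref{thm:K4-free-planar}: the authors of \cite{komu16} resolved the planar cases ``using the brick generation theorem of Norine and Thomas''. Your converse strategy --- induct via strictly thin edges and Corollary~\ref{cor:strictly-thin-brick}, using Proposition~\ref{prp:inheritance} to propagate $\overline{C_6}$-freeness down to the retract --- is therefore consistent with what is indicated, though I cannot compare details since none are given here. One structural point to tidy: the base cases supplied by Corollary~\ref{cor:strictly-thin-brick} are the Norine--Thomas bricks, and the tricorn is \emph{not} one of them, so it should arise in your argument as the result of an expansion step from a smaller brick (the bicorn, say) rather than as a base case.

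There is, however, a genuine error in your forward direction. You write that in an odd wheel $W_k$ ``every odd cycle must contain the hub since the rim is an even cycle''. But by the paper's own definition (Section on Odd Wheels), $W_k$ is the join of $K_1$ with an \emph{odd} cycle $C_k$, so the rim has odd length. The correct argument that $W_k$ is odd-intercyclic (a fact the paper states explicitly just after Theorem~\ref{thm:usp}) is different: any cycle avoiding the hub is contained in the rim and hence equals the rim, so two vertex-disjoint odd cycles would both have to pass through the hub, which is impossible. Your arguments for the staircases of order $4k$ and the tricorn are only sketched; note in particular that near-bipartiteness alone is not enough, since staircases of order $2\pmod 4$ and truncated biwheels are also near-bipartite yet are $\overline{C_6}$-based, so the ``rigid ladder layout'' part of your argument is where the actual content lies and would need to be made precise.
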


In the case of nonplanar bricks, Problems~\ref{prb:K4-based} and \ref{prb:prism-based} remain unsolved.

\smallskip
  Norine and Thomas~\cite{noth07} call a  matching covered graph $J$ a
  {\em matching minor}  of a matching covered graph $G$  if $J$ can be
  obtained  from  a  conformal  subgraph   $H$  of  $G$  by  means  of
  bi-contractions.   By  Corollary~\ref{cor:conformal}, any  conformal
  subgraph  $H$  of  $G$  can  be   obtained  from  $G$  by  means  of
  bi-contractions and deletions of removable classes. And, if $H$ were
  a bi-subdivision of  $J$, then $J$ can clearly be  obtained from $H$
  by means of bi-contractions. (A {\em restricted bi-contraction} is a
  bi-contraction of  a vertex  of degree two ---  one of  whose neighbours
  also has degree two. If $H$ is a bi-subdivision of $J$, then $J$ can
  in fact be obtained from $H$ by restricted bi-contractions.)

  \smallskip

  It follows from the above observations that every conformal minor of
  $G$ is  also a  matching minor of  $G$. But the  converse is  not true in
  general (due to  the  fact  unrestricted bi-contractions  are
  permissible in obtaining a matching  minor of $G$.) For example, the
  wheel  $W_5$  is  a  matching  minor  of  the  graph  $G$  shown  in
  Figure~\ref{fig:matching-minor} because  $W_5$ can be  obtained from
  $G$  by first  deleting the  edge  $e$ and  then bi-contracting  the
  vertex $v$ in the resulting graph.   But it is not a conformal minor
  of $G$ for the simple reason that $G$ has no vertices of degree greater than four.

However, it is easily seen that if a cubic matching
covered graph $J$ is obtained from a matching covered graph $H$
by means of bi-contractions, then $H$ must be a bi-subdivision of $J$.
Consequently, we have the following: 

  \begin{cor}\label{cor:two-types-of-minors}
    A cubic matching covered graph $J$ is a  matching minor of a matching covered graph
    $G$ if and only if $J$ is a conformal minor of $G$.
  \end{cor}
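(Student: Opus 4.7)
The direction ``conformal minor implies matching minor'' has already been established in the discussion preceding the corollary: if $H$ is a bi-subdivision of $J$ and is a conformal subgraph of $G$, then $J$ can be recovered from $H$ by bi-contracting the internal degree-two vertices along each subdivided edge, so $J$ is a matching minor of $G$. So the plan is to focus on the converse under the hypothesis that $J$ is cubic.

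Suppose $J$ is cubic and a matching minor of $G$; by definition there is a conformal subgraph $H$ of $G$ from which $J$ is obtained by a sequence of bi-contractions. I would reduce the entire argument to the following key claim: \emph{$H$ must itself be a bi-subdivision of $J$.} Once this is in hand, $H$ directly witnesses that $J$ is a conformal minor of $G$, and we are done.

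To establish the claim, I would reverse the perspective and regard $H$ as being obtained from $J$ by a sequence of bi-splittings (the inverse of bi-contraction), and then argue by induction on the length of this sequence that every intermediate graph is a bi-subdivision of $J$. The base case $H=J$ is trivial. For the inductive step, assume an intermediate graph $H'$ is already a bi-subdivision of $J$ and that we bi-split a vertex $v$ of $H'$ to form $H' \{v \rightarrow (v_1,v_0,v_2)\}$. The key observation is that, since $J$ is cubic, every vertex of $H'$ has degree either $2$ (an internal vertex on some subdivided edge-path) or $3$ (a branch vertex corresponding to a vertex of $J$). In each case the distinctness requirement on the neighbours of $v_1$ and $v_2$ pins down the distribution of the edges at $v$: $(1,1)$ when $\deg_{H'}(v)=2$ and $(2,1)$ when $\deg_{H'}(v)=3$. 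A short inspection shows that either distribution is geometrically a two-vertex subdivision of some edge-path of the current bi-subdivision, with the branch vertex in the degree-three case simply being relabelled as the outer vertex that inherits two of the old edges. The subdivided edge-path grows by two in length, preserving its odd parity, so the result is again a bi-subdivision of $J$.

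The main delicate point will be the branch-vertex case: one has to check carefully that $(2,1)$ is the only admissible distribution and that its effect can genuinely be read as subdividing exactly one of the three incident edge-paths of the branch vertex while leaving the other two intact. The cubicity of $J$ is essential here, since it is the bound $\deg_{H'}(v)\le 3$ at every stage that forces bi-splittings to act as pure subdivisions. Without this hypothesis an unrestricted bi-contraction can collapse a higher-degree vertex into another of the same degree, exactly as in the $W_5$ example shown in Figure~\ref{fig:matching-minor}, and the equivalence between matching minors and conformal minors breaks.
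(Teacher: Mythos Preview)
Your proposal is correct and follows essentially the same approach as the paper. The paper's argument consists of the single sentence preceding the corollary --- ``it is easily seen that if a cubic matching covered graph $J$ is obtained from a matching covered graph $H$ by means of bi-contractions, then $H$ must be a bi-subdivision of $J$'' --- and you have simply supplied the details of this ``easily seen'' claim via the natural induction on bi-splittings, correctly identifying that cubicity forces every bi-splitting to act as a two-vertex subdivision of one edge-path.
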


  \begin{figure}[!ht]
    \psfrag{W5}{$W_5$} 
    \psfrag{G}{$G$} 
    \psfrag{e}{$e$} 
    \psfrag{v}{$v$}
    \begin{center}
      \epsfig{file=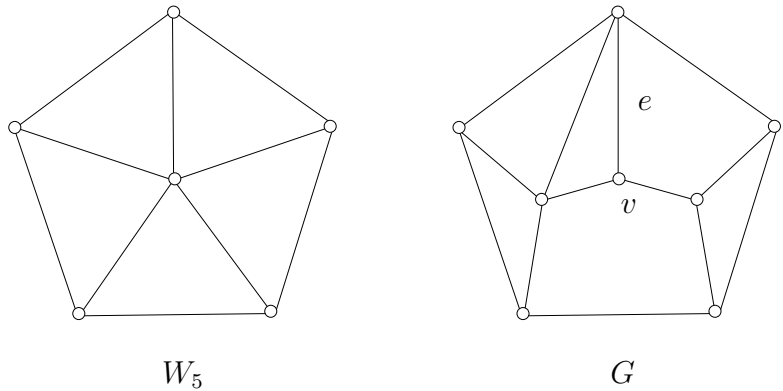, width=.75\textwidth}
      \caption{The wheel $W_5$ is a matching minor of $G$}
      \label{fig:matching-minor}
    \end{center}
  \end{figure}

  We      conclude      this      section     by      noting      that
  Theorems~\ref{thm:solid-mc-graphs}    and~\ref{thm:solid-hereditary}
  together  imply the following:

\begin{cor}
\label{cor:conformal-minor-solidity}
Every conformal minor of a solid matching covered graph is a solid matching covered graph.
\end{cor}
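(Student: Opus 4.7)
My plan is to prove the corollary by induction, showing that each primitive operation used to pass from the solid graph $G$ down to its conformal minor $J$ preserves solidity. By definition, a conformal minor $J$ of $G$ comes with a bi-subdivision $H$ of $J$ that is a conformal subgraph of $G$. Corollary~\ref{cor:conformal} lets me obtain $H$ from $G$ by a sequence of (i) bi-contractions of vertices of degree two and (ii) deletions of removable classes; then $J$ is obtained from $H$ by bi-contracting the internal vertices of the subdivision paths, which are themselves of degree two. Thus it suffices to prove that each of these two elementary operations, when applied to a solid matching covered graph, yields a solid matching covered graph.

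For bi-contractions, let $v_0$ be a vertex of degree two in a solid matching covered graph, with neighbours $v_1$ and $v_2$. Every perfect matching of the graph meets $\partial(v_0)$ in exactly one edge, so $\partial(X) = \partial(\{v_0,v_1,v_2\})$ is a tight cut, and the bi-contraction of $v_0$ is precisely the cut-contraction with respect to this cut. Theorem~\ref{thm:solid-mc-graphs} then immediately gives that the resulting graph is solid.

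For deletions of removable classes, the clean case is a removable edge $e$: Theorem~\ref{thm:solid-hereditary} is exactly the statement that $G-e$ is solid. The delicate case is a removable class of size greater than one. My plan here is to reduce to the brick setting via the tight cut decomposition: by Theorem~\ref{thm:solid-mc-graphs}, every brick in a tight cut decomposition of a solid $G$ is solid, so any equivalence class of size at least two projects, by Theorem~\ref{thm:eq-classes-in-bricks}, to a doubleton $\{e,f\}$ in some brick $B$ whose deletion makes $B$ bipartite; a bipartite matching covered graph is solid by Corollary~\ref{cor:cuts-in-bipartite-graphs}, and reassembling via Theorem~\ref{thm:solid-mc-graphs} in reverse yields that the whole graph remains solid after the deletion.

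The main obstacle is the bookkeeping in this last step: one must verify that a removable class of size at least two of a (possibly non-brick) solid matching covered graph $G$ really does restrict to a doubleton in a single brick of the decomposition while behaving trivially in the remaining bricks, so that Theorem~\ref{thm:eq-classes-in-bricks} can be invoked. Once this is established, induction on $|E(G)|+|V(G)|$ closes the argument: every bi-contraction and every removable class deletion takes a solid matching covered graph to a solid matching covered graph, so the conformal minor $J$ is solid, as claimed.
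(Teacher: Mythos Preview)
Your overall strategy matches the paper's: the paper simply asserts that the corollary follows from Theorems~\ref{thm:solid-mc-graphs} and~\ref{thm:solid-hereditary}, and your unpacking via Corollary~\ref{cor:conformal}---bi-contractions are tight-cut contractions (Theorem~\ref{thm:solid-mc-graphs}), removable-edge deletions are covered by Theorem~\ref{thm:solid-hereditary}---is the natural way to make that precise. Your treatment of those two operations is correct.

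The doubleton case is indeed where the work hides, but your proposed fix is not right as stated. It is not true that a removable doubleton $\{e,f\}$ of a solid matching covered graph must survive as a doubleton in a single brick of the tight cut decomposition. If $C=\partial(X)$ is a nontrivial tight cut with $e$ on the $X$-side and $f$ on the $\overline X$-side, then $e$ becomes \emph{individually removable} in $G/\overline X$: by minimality of the class $\{e,f\}$, the unique inadmissible edge of $G-e$ is $f$, which is absent from $G/\overline X$. So the doubleton can dissolve into two removable singletons living in different pieces, and your appeal to Theorem~\ref{thm:eq-classes-in-bricks} never gets off the ground. The clean argument is a direct induction on $|V(G)|$ for the statement ``if $G$ is solid and $\{e,f\}$ is a removable doubleton then $G-\{e,f\}$ is solid'': if $G$ is a brick, Theorem~\ref{thm:eq-classes-in-bricks} makes $G-\{e,f\}$ bipartite, hence solid; otherwise take a nontrivial tight cut $C$ (still tight in $G-\{e,f\}$, and note $e,f$ cannot both lie in $C$ since $e\Leftrightarrow f$ forces them into the same perfect matchings), and observe that each $C$-contraction of $G-\{e,f\}$ arises from the corresponding solid $C$-contraction of $G$ by deleting nothing, a removable edge, or a removable doubleton---handled by doing nothing, Theorem~\ref{thm:solid-hereditary}, or the inductive hypothesis, respectively. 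The ``if'' direction of Theorem~\ref{thm:solid-mc-graphs} then gives that $G-\{e,f\}$ is solid.
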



However, not every conformal  minor of a nonsolid graph
  is nonsolid. For  example, the bicorn is nonsolid,  but $K_4$, which
  is solid,  is a conformal minor  of the bicorn.

Since $\overline{C_6}$ is nonsolid, we have the following consequence:

\begin{cor}
\label{cor:solid-subset-of-C6bar-free}
Every solid matching covered graph is $\overline{C_6}$-free.
\end{cor}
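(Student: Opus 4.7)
The plan is to derive this immediately from Corollary~\ref{cor:conformal-minor-solidity}, which asserts that solidity is preserved under taking conformal minors, combined with the fact (noted in the paragraph immediately preceding the statement) that $\overline{C_6}$ is itself nonsolid.

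The argument proceeds by contradiction. Suppose $G$ is a solid matching covered graph, and assume toward a contradiction that $G$ is $\overline{C_6}$-based, i.e., $\overline{C_6}$ is a conformal minor of $G$. By Corollary~\ref{cor:conformal-minor-solidity}, every conformal minor of a solid matching covered graph is itself solid; applying this with $J = \overline{C_6}$ forces $\overline{C_6}$ to be solid. This contradicts the fact, recalled just above the statement, that $\overline{C_6}$ is nonsolid. Hence no solid matching covered graph can have $\overline{C_6}$ as a conformal minor, i.e., every solid matching covered graph is $\overline{C_6}$-free.

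There is essentially no obstacle here; the statement is a one-line consequence of the preceding corollary and the explicit example of nonsolidity. The only point worth being careful about is to invoke Corollary~\ref{cor:conformal-minor-solidity} rather than trying to argue directly from Theorem~\ref{thm:solid-mc-graphs} (which concerns tight-cut contractions) or Theorem~\ref{thm:solid-hereditary} (which concerns removable edges); Corollary~\ref{cor:conformal-minor-solidity} already packages the combined effect of these two results, together with the description of conformal minors provided by Corollary~\ref{cor:conformal}, into exactly the hereditary property we need.
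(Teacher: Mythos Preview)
Your argument is correct and matches the paper's own reasoning exactly: the corollary is stated as an immediate consequence of Corollary~\ref{cor:conformal-minor-solidity} together with the observation that $\overline{C_6}$ is nonsolid.
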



  \subsection{Robust cuts in bricks}
  \subsubsection{$b$-invariant edges}
  Recall  that, for a matching  covered graph  $G$,  the symbol  $b(G)$
  represents the  number of bricks  in any tight cut  decomposition of
  $G$.

  \smallskip

  A  removable edge  $e$  of a  brick $G$  is  {\em $b$-invariant}  if
  $b(G-e)=b(G)=1$.  Motivated by  his  work on  the matching  lattice,
  Lov\'asz~\cite{lova87}  had   conjectured  that  every  brick
    distinct  from  $K_4$,  $\overline{C_6}$ and  $\mathbb{P}$  has  a
    $b$-invariant   edge.    All   bricks  other   than   $K_4$   and
  $\overline{C_6}$ have  removable edges; in  fact, every edge  of the
  Petersen  graph is  removable.   What is  striking about  Lov\'asz's
  conjecture is that it asserts that among bricks which have removable
  edges, Petersen graph  is the only brick which  has no $b$-invariant
  edges.   In  \cite{clm02a},  three  of   us  presented  a  proof  of
  a strengthening of Lov\'asz's cojecture. We showed:

  \begin{thm}\label{thm:b-invariant-brick}(\cite{clm02a})
    Every brick distinct from $K_4$, $\overline{C_6}$, the bicorn, and
    the Petersen graph $\mathbb{P}$ has two $b$-invariant edges.
  \end{thm}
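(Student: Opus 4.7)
My plan is to prove the theorem by induction on $|V(G)|$, with the heart of the argument being an analysis of how removable edges of $G$ interact with the tight cut decomposition of $G-e$. The base of the induction consists of direct verification for the smallest bricks outside the exception set---for example $W_5$, the pentagonal prism, the truncated biwheel $T_6$, and the small M\"obius ladders and staircases---in each of which two $b$-invariant edges can be exhibited explicitly.

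For the inductive step, let $G$ be a brick not isomorphic to any of $K_4$, $\overline{C_6}$, the bicorn, or $\mathbb{P}$. By Theorem~\ref{removable-in-bricks}, $G$ has a removable edge, so I would split into two regimes depending on the value of $b(G-e)$. In the favourable regime, there exists a removable edge $e$ with $b(G-e)=1$, so $e$ is itself $b$-invariant; to obtain a second such edge I would apply the induction hypothesis to the unique brick $G'$ appearing in a tight cut decomposition of $G-e$ (assuming $G'$ is not itself an exception) and lift a $b$-invariant edge of $G'$ back to~$G$ via the bi-splitting/splicing correspondence, verifying along the way that $b$-invariance is preserved under these operations. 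In the stubborn regime, every removable edge $e$ of $G$ satisfies $b(G-e)\geq 2$, so $G-e$ has nontrivial tight cuts. Since $G$ itself has no nontrivial barrier and no $2$-separation, the ELP Theorem~\ref{thm:elp} guarantees that the cuts of $G-e$ arise from the deletion of $e$ in a structurally constrained way; I would apply the induction hypothesis to the $C$-contractions of $G-e$, which are strictly smaller matching covered graphs, and lift two $b$-invariant edges back to~$G$, using Lemma~\ref{lem:v0matching} and related structural results to ensure that the lifted edges remain removable in~$G$. Near-bipartite bricks, singled out by Theorem~\ref{thm:eq-classes-in-bricks}, would require a parallel sub-analysis because the two edges constituting a removable doubleton are not individually removable and hence cannot be $b$-invariant.

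The main obstacle will be the terminal subcases of the stubborn regime in which all bricks produced by the tight cut decomposition of $G-e$ themselves lie in the exception set, so that the induction hypothesis supplies no useful lifting candidates. This is precisely the structural phenomenon that forces the bicorn to be added to Lov\'asz's original list of exceptions: when the decomposition produces only exceptional bricks (and especially when these are spliced at hubs of small wheels), the inductive lifting mechanism repeatedly breaks down. For these terminal configurations, a direct structural argument is needed to show that either $G$ is itself one of the four exceptions or that two $b$-invariant edges can be located by an explicit combinatorial inspection of the few possible splicings involved. Controlling this finite list of splicings of $K_4$, $\overline{C_6}$, the bicorn and $\mathbb{P}$---and verifying case by case that the resulting brick either is excluded or already contains two $b$-invariant edges---is the combinatorial crux of the proof.
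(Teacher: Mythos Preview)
The paper does not contain a proof of this theorem; it is quoted from \cite{clm02a} as a known result, and the present paper uses it only as background for the notion of robust cuts. So there is no ``paper's own proof'' against which your proposal can be directly compared.

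That said, the surrounding discussion in the paper does indicate the architecture of the proof in \cite{clm02a}, and it is quite different from what you sketch. The actual argument hinges on the solid/nonsolid dichotomy: by Theorem~\ref{thm:solid-b-invariant}, every removable edge of a solid brick is automatically $b$-invariant, so for solid bricks the task reduces to exhibiting two removable edges. For nonsolid bricks, the key structural tool is the existence of a $3$-robust cut (Theorem~\ref{thm:3-robust}), and the induction is organised around the two $C$-contractions with respect to such a cut, both of which are near-bricks. Your proposal, by contrast, is organised around the tight cut decomposition of $G-e$ for a removable edge $e$; it never invokes solidity or robust cuts, and in your ``stubborn regime'' (where every removable edge has $b(G-e)\geq 2$) you are effectively rediscovering the nonsolid case without the machinery that makes it tractable.

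There is also a genuine gap in your inductive step. In the ``favourable regime'' you propose to lift a $b$-invariant edge of the brick $G'$ of $G-e$ back to $G$, but an edge that is $b$-invariant in $G'$ need not even be removable in $G$, let alone $b$-invariant there; the bi-splitting/splicing correspondence does not preserve removability in the direction you need without substantial additional argument. Likewise, in the ``stubborn regime'' your plan to apply the induction hypothesis to the $C$-contractions of $G-e$ and lift back is not justified: those $C$-contractions are matching covered graphs, not bricks, so the hypothesis does not apply to them directly, and the lifting step has the same obstruction as above. These are exactly the difficulties that the robust-cut framework in \cite{clm02a} is designed to overcome.
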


  Many  of the  notions and  results  used in  the proofs  of the  new
  results in  this paper  arose in  that context of  our proof  of the
  above theorem.  We record below  only those definitions  and results
  that are essential for proving the main result of this paper.

  \subsubsection{Existence of robust cuts}
  The starting point of  our attempt to resolve  Lov\'asz's conjecture was
  to investigate the implications of the existence of a removable edge
  in  a brick  which fails  to be  $b$-invariant. This  led us  to the
  serendipitous discovery of solid bricks. We were able to show:

  \begin{thm}\label{thm:solid-b-invariant}(\cite{clm02})
    Any brick which has a removable edge that is not \mbox{$b$-invariant} has
    a nontrivial  separating cut. (Consequently, every  removable edge
    in a solid brick is $b$-invariant.)
  \end{thm}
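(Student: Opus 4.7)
I would prove the contrapositive: if $G$ is a brick and $e$ is a removable edge with $b(G-e) \neq 1$, then $G$ has a nontrivial separating cut. The strategy is to extract a nontrivial tight cut of $G-e$ and show that the corresponding cut in $G$ remains separating.

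First I would show $b(G-e) \geq 2$. Since $b(G-e) \neq 1$ by hypothesis, it suffices to rule out $b(G-e)=0$, i.e.\ that $G-e$ is bipartite. If $(A,B)$ were such a bipartition, then $|A|=|B|$ (since $G-e$ is matching covered), and since $G$ is nonbipartite both endpoints of $e$ must lie in one part, say $A$; but then $B$ is independent in $G$, so $o(G-A)=|B|=|A|\geq 3$, making $A$ a nontrivial barrier of $G$ and contradicting the ELP Theorem (Theorem~\ref{thm:elp}). Hence $b(G-e)\geq 2$, and $G-e$ admits a nontrivial tight cut.

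Next, by the ELP Theorem applied to $G-e$, I would pick a nontrivial ELP-cut and treat the barrier-cut case (the $2$-separation case is handled analogously). Let $B$ be the associated nontrivial barrier of $G-e$ with odd components $K_1,\dots,K_{|B|}$ of $(G-e)-B$. Since $G$ is a brick, $B$ cannot be a nontrivial barrier of $G$; a parity analysis of the effect of adding $e$ to $(G-e)-B$ then forces $e=uv$ with $u\in V(K_1)$ and $v\in V(K_2)$ for two odd components, merging them into an even component of $G-B$. Assuming $|B|\geq 3$, I would select an odd component $K:=K_i$ with $i\notin\{1,2\}$ and $|V(K)|\geq 2$; then $e$ does not cross $\partial(V(K))$, so $\partial_G(V(K))=\partial_{G-e}(V(K))$, which is tight in $G-e$. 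Consequently $G/\overline{V(K)} = (G-e)/\overline{V(K)}$ is automatically matching covered. The remaining task is to show $G/V(K)=(G-e)/V(K)+e$ is matching covered, which reduces to verifying that the new edge $e$ is admissible in this graph. By Proposition~\ref{prp:inadmissible}, if it were not there would be a barrier $B^\ast$ of $G/V(K)$ containing both $u$ and $v$; in the principal subcase where $B^\ast$ misses the contraction vertex, $B^\ast$ lifts directly to a nontrivial barrier of $G$ of size $\geq 2$ (using that $V(K)$ is connected with $N_G(V(K))\subseteq B$), contradicting brickness.

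The main obstacle is twofold. First, the subcase where $B^\ast$ contains the contraction vertex: here the naive lift falls short of being a barrier of $G$ by a defect of $|V(K)|-1$, and one must exploit the induced barrier-like structure inside $V(K)$ to produce a strictly smaller nontrivial tight cut of $G-e$ and iterate --- effectively making the argument inductive on $|V(G)|$. Second, the case $|B|=2$, in which no third odd component is available: here one instead invokes the $2$-separation side of the ELP Theorem applied to $G-e$, or appeals to the fact that $b(G-e)\geq 2$ forces further tight cuts inside $(G-e)/V(K_1)$ or $(G-e)/V(K_2)$ whose shores are not met by $e$. In either situation the resulting cut supplies a nontrivial separating cut of $G$, completing the contrapositive.
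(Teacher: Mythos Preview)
The paper does not prove this theorem; it merely quotes it from \cite{clm02}. So there is no in-paper proof to compare against, and your task reduces to producing a self-contained argument (or citing the original).

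That said, your sketch has real gaps beyond the two you flag as ``obstacles''. First, in the barrier case with $|B|\geq 3$ you assume you can choose an odd component $K_i$ with $i\notin\{1,2\}$ \emph{and} $|V(K_i)|\geq 2$. Nothing guarantees this: all components other than $K_1,K_2$ may well be singletons, in which case $\partial(V(K_i))$ is trivial and your argument produces no cut. The non-bipartiteness of $G-e$ only ensures that \emph{some} $K_j$ has at least three vertices, but that $j$ may be $1$ or $2$; you must then work with a cut that $e$ actually crosses, and the admissibility of $e$ in the relevant contraction is exactly the hard part you are trying to sidestep.

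Second, the items you label ``main obstacle'' are not residual technicalities --- they are the substance of the theorem. In \cite{clm02} the proof does not proceed by picking an arbitrary ELP-cut of $G-e$ and hoping $e$ misses it; rather one exploits the canonical partition of $V(G-e)$ into maximal barriers, uses the brick hypothesis on $G$ to pin down how the ends of $e$ sit relative to those barriers, and then exhibits a specific shore $X$ (built from a maximal barrier together with all but one of its odd components) for which $\partial_G(X)$ is separating. Your inductive patch for the ``$B^\ast$ contains the contraction vertex'' subcase and your fallback for $|B|=2$ are gestures toward this structure, but as written they do not converge: you have not specified a well-founded induction parameter, and the $2$-separation case is genuinely different (there the tight cut of $G-e$ already separates the ends of $e$, so the analysis is not ``analogous'' to the barrier case).

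In short: the contrapositive strategy and the first paragraph (ruling out $b(G-e)=0$) are fine, and your ``principal subcase'' lifting argument is correct where it applies; but the proof as outlined does not close, and the missing pieces are precisely where the work lies.
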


  It  follows  from  the  above  theorem that  it  suffices  to  prove
  Lov\'asz's conjecture  for nonsolid bricks. As  every nonsolid brick
  $G$ has  nontrivial separating cuts, it  was natural to try  to show
  that  $G$  has  a  $b$-invariant  edge  inductively  by  considering
  cut-contractions  of  $G$  with  respect to  a  suitable  separating
  cut. This idea led us to the notion of a robust cut.

  \smallskip

  A nontrivial separating cut $C:=\partial(X)$ of a nonsolid brick $G$
  is   {\em   robust}  if   both   the   $C$-contractions  $G/X$   and
  $G/\overline{X}$ of $G$  are near-bricks.
We say that a robust cut $C$ is {\em $k$-robust} if $C$ has characteristic $k$.
We were able  to prove the
  following fundamental result.

  \begin{thm}[\protect{\cite[Theorem~4.1]{clm02a}}]
    \label{thm:3-robust}
    Every simple brick distinct from the Petersen graph has a 3-robust
    cut. The Petersen graph  has only 5-robust cuts.\footnote{In fact,
      in every  simple brick distinct  from the Petersen  graph, every
      robust cut is 3-robust~\cite{calu00}.}
  \end{thm}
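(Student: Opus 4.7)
Plan: Read the statement as concerning simple nonsolid bricks (solid bricks have no nontrivial separating cuts and thus no robust cuts). The proof naturally splits into three stages. First, every simple nonsolid brick $G$ has a robust cut: take a nontrivial separating cut $C=\partial(X)$ with $|X|$ as small as possible and suppose for contradiction that, say, $G/X$ is not a near-brick. Then $G/X$ is nonbipartite (else Proposition~\ref{prp:bipartite-shore} would make $C$ tight in the brick $G$), so $b(G/X)\ge 2$ and by the ELP Theorem $G/X$ admits a nontrivial tight cut $\partial(Y)$, which we orient so that $Y\subsetneq X$; a direct check using Proposition~\ref{prp:sep-cut} then shows that $\partial_G(Y)$ is a nontrivial separating cut of $G$ whose shore is strictly smaller than $X$, contradicting minimality. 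Hence both $C$-contractions are near-bricks and $C$ is robust.

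Second, among robust cuts of $G$ choose $C=\partial(X)$ of minimum characteristic $k$ and suppose $k\ge 5$. Fix a perfect matching $M$ with $|M\cap C|=k$; it induces a $v_0$-matching in $G/X$ and a $\overline{v}_0$-matching in $G/\overline{X}$, where $v_0,\overline{v}_0$ are the two contraction vertices. Apply the Lemma on Odd Wheels (\ref{lem:wheels}) inside the brick portion of each near-brick, first peeling off removable edges to reach a solid core (Theorems~\ref{removable-in-bricks} and~\ref{thm:solid-hereditary}): either a removable edge outside $M\cup\partial(v_0)$ is produced, which lets us rewrite $M$ into a perfect matching $M'$ of $G$ with $|M'\cap C|<k$ and contradict minimality of $k$, or the brick portion is an odd wheel centered at the contraction vertex. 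Performing this analysis on both sides and accounting for the $k$ matching edges of $M\cap C$ that link the two rims, the only simple brick that can arise from splicing two odd wheels along a characteristic-$k$ cut is $\mathbb{P}=W_5\odot W_5$ with $k=5$, contradicting $G\ne\mathbb{P}$.

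Third, for $\mathbb{P}$ itself, vertex-transitivity reduces the analysis to shores $X$ of odd cardinality $3$ or $5$; girth $5$ bounds the number of internal edges in any such shore, forcing $|\partial(X)|\ge 5$, and an explicit inspection of Petersen's six perfect matchings rules out any that meets such a cut in exactly three edges. Hence every nontrivial separating cut of $\mathbb{P}$ is $5$-robust.

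The hardest part is the second stage. The Lemma on Odd Wheels is formulated for solid bricks only, so transferring its dichotomy to the (possibly nonsolid) brick portion of a near-brick requires a careful peeling-off of removable edges while tracking the restriction of $M$; and once both sides have been forced to be odd wheels, the gluing analysis along a characteristic-$k$ cut must be executed delicately enough to isolate $\mathbb{P}$ as the unique simple-brick outcome and to rule out any splicing that would produce a matching of smaller intersection with $C$. By contrast, the first and third stages are routine minimal-counterexample and direct-inspection arguments.
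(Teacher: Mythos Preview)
The paper does not prove Theorem~\ref{thm:3-robust}; it is quoted from \cite[Theorem~4.1]{clm02a} and used as a black box in the proof of the Main Theorem. So there is no ``paper's own proof'' against which to compare your attempt, and you should not expect the ingredients developed in this paper (in particular Lemma~\ref{lem:wheels}) to suffice for a self-contained argument.

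That said, your sketch has genuine gaps. In Stage~1, taking a separating cut $C=\partial(X)$ with $|X|$ minimal only forces the contraction on the small side, $G/\overline{X}$, to be a near-brick: a failure on the large side produces a nontrivial separating cut $\partial(Y')$ with $Y'\subset\overline{X}$, and nothing prevents $|Y'|\ge|X|$, so minimality of $|X|$ is not contradicted. The existence of robust cuts is not as routine as you suggest.

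Stage~2 is where the plan breaks down. The Lemma on Odd Wheels (Lemma~\ref{lem:wheels}) is stated and proved only for \emph{solid} bricks, and Theorem~\ref{thm:solid-hereditary} goes the wrong way for your purpose: it says a solid graph stays solid after deleting a removable edge, not that a nonsolid brick can be peeled down to a solid one. There is no reason the brick of the near-brick $G/X$ should be solid, and no mechanism in your outline produces one. Even granting the dichotomy, you do not explain how a removable edge $f\notin M\cup C$ in $G_1$ yields a perfect matching $M'$ of $G$ with $3\le|M'\cap C|<k$; removability of $f$ in a $C$-contraction gives no direct handle on $|M'\cap C|$. Finally, splicing two copies of $W_5$ at their hubs can yield the pentagonal prism as well as $\mathbb{P}$, so the gluing analysis needs a finer argument (tracking the characteristic, not just simplicity) to isolate the Petersen graph. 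In short, the second stage is the heart of the theorem and your outline does not supply the missing ideas; the proof in \cite{clm02a} is substantially more involved.
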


  Both   the   cut   $C$   and   $D$  in   the   brick   depicted   in
  Figure~\ref{fig:robust-non-robust} are separating cuts, but only one
  of them, namely $C$, is a robust cut.

  \begin{figure}[!ht]
    \psfrag{C}{$C$}\psfrag{D}{$D$}
    \begin{center}
      \epsfig{file=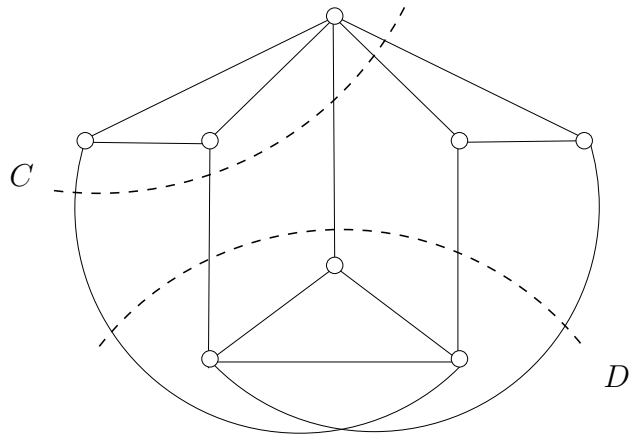, width=.6\textwidth}
      \caption{Cut $C$ is robust, but $D$ is not}
      \label{fig:robust-non-robust}
    \end{center}
  \end{figure}

  \section{Conformal Minors of Nonsolid Bricks }
  \label{sec:conformal-minors}

%

  We shall refer to $\overline{C_6}$, the bicorn, the tricorn and the Petersen graph
  as the {\em basic} nonsolid bricks.

  \begin{thm}[{\sc Main Theorem}]
    \label{thm:four-nonsolid}
    Every nonsolid  matching covered  graph contains a  basic nonsolid
    brick as a conformal minor.
  \end{thm}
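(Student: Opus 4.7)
The plan is to reduce the problem to nonsolid bricks and then to prove the brick case by induction. By Theorem~\ref{thm:solid-mc-graphs}, some brick in any tight cut decomposition of $G$ is itself nonsolid; and since the four basic nonsolid bricks are all cubic, iterating Theorem~\ref{thm:km-cubic} along the tight cut decomposition (and noting that braces, being bipartite, cannot contain a nonbipartite cubic brick as a conformal minor) shows that $G$ contains a basic nonsolid brick as a conformal minor if and only if some brick of $G$ does. It therefore suffices to prove the statement when $G$ is itself a nonsolid brick.

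For the brick case I would induct on $|V(G)|$, with the base case being when $G$ is itself one of $\overline{C_6}$, the bicorn, the tricorn, or $\mathbb{P}$. In the inductive step, the strategy is to exhibit a proper conformal minor $G'$ of $G$ that is still a nonsolid matching covered graph; by the inductive hypothesis, $G'$ then contains a basic nonsolid brick as a conformal minor, which transfers back to $G$ by the transitivity of conformal containment (Proposition~\ref{prp:transitivity}). Since $G$ is a brick different from $K_4$ and $\overline{C_6}$, Theorem~\ref{removable-in-bricks} supplies a removable edge $e$, and $G - e$ is automatically a conformal subgraph of $G$. If $G - e$ is nonsolid, the induction closes; the difficulty lies in the complementary case where every such $G - e$ is solid.

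In that remaining case I would pick a nontrivial separating cut $C := \partial(X)$ of $G$ together with a perfect matching $M$ of $G$ satisfying $|M \cap C| \geq 3$. The matching $M$ descends to an $\overline{x}$-matching $M'$ of $G/X$ with $|M' \cap \partial(\overline{x})| \geq 3$, and symmetrically to an $x$-matching of $G/\overline{X}$. Having verified that the two $C$-contractions are (or reduce to) solid bricks, one can invoke the Lemma on Odd Wheels~\ref{lem:wheels}: each $C$-contraction is then either an odd wheel centered at its contraction vertex, or admits a removable edge disjoint from the induced matching and from the star at the contraction vertex. In the wheel subcase, the splicing description $G = G/\overline{X} \odot G/X$, combined with the splicings $K_4 \odot K_4 = \overline{C_6}$ and $K_4 \odot \overline{C_6}$ (which is the bicorn), and the further splicings of $K_4$ with the bicorn and with the tricorn depicted in Figures~\ref{fig:figR8} and~\ref{fig:figR10}, should yield a basic nonsolid brick as a conformal minor of $G$. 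The Petersen graph case arises when the separating cut is $5$-robust rather than $3$-robust, which by Theorem~\ref{thm:3-robust} characterizes $\mathbb{P}$ among simple bricks.

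The main obstacle is the removable-edge alternative of the Lemma on Odd Wheels: one must lift a removable edge discovered in a $C$-contraction back to a structure in $G$ that yields a strictly smaller nonsolid matching covered conformal minor of $G$, so that the inductive hypothesis can be brought to bear. A further delicate point is that Theorem~\ref{thm:km-cubic} applies only to tight cuts, whereas the separating cut $C$ used above need not be tight; pushing a conformal minor from $G/X$ back up to $G$ therefore requires a separate lifting argument, or else the direct production of the desired basic nonsolid brick inside $G$ rather than inside $G/X$.
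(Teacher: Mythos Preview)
Your reduction to bricks and the handling of the case where some proper conformal subgraph is nonsolid are correct and match the paper's Cases~1 and~2. The real content is in the remaining case, where $G$ is a simple nonsolid brick and every proper matching covered conformal subgraph of $G$ is solid. Here your outline has the right ingredients but is missing the central mechanism, and it also misidentifies the target of the argument.

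The obstacle you name at the end---how to ``lift'' a removable edge of a $C$-contraction back to~$G$---is resolved not by a lifting construction but by a contradiction. Suppose $e$ is removable in $G_1 = G/\overline{X}$ and $e \notin C$. Then $G_1-e$ and $G_2-e=G_2$ are both matching covered, so $G-e$ is matching covered with $C$ separating in it. By your standing hypothesis $G-e$ is solid, hence $C$ is \emph{tight} in $G-e$. But if additionally $e\notin M_0$, then $M_0$ is a perfect matching of $G-e$ meeting $C$ in three edges, a contradiction. Thus every edge removable in $G_1$ and not in $C$ must lie in $M_0$. This single observation (the paper's Corollary~\ref{cor:rem=>M0}) is what drives everything: combined with the Lemma on Odd Wheels it forces a solid $C$-contraction to be exactly $K_4$, and combined with Lemma~\ref{lem:v0matching} it forces $C\subseteq M_0$, hence $|C|=3$.

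Several other steps in your sketch need repair. You cannot take an arbitrary separating cut: you must use a $3$-robust cut, supplied by Theorem~\ref{thm:3-robust} (this is where $G\ne\mathbb{P}$ enters), so that the $C$-contractions are near-bricks; a further argument using Lemma~\ref{lem:v0matching} and Theorem~\ref{thm:braces} is then needed to show they are actually bricks. You also assume both $C$-contractions are solid in order to apply the Lemma on Odd Wheels, but they need not be; when $G_i$ is nonsolid, the paper applies the induction hypothesis \emph{to $G_i$} and then uses the same tightness-contradiction mechanism (now with a removable ear $R$) to force $G_i$ to equal a basic nonsolid brick. Finally, the conclusion of the hard case is stronger than you indicate: one shows that $G$ itself \emph{is} one of $\overline{C_6}$, the bicorn, the tricorn, or $\mathbb{P}$, not merely that it contains one as a proper conformal minor. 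The splicing bookkeeping you mention is the very last step, after one already knows $G_1=K_4$ and $G_2\in\{K_4,\overline{C_6},R_8\}$.
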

  \begin{proof}
    Let $G$  be any nonsolid  matching covered graph.  We  shall prove
    the  validity of  the assertion by  induction on  the
    number of edges of~$G$. 
    
    It follows from Theorem~\ref{thm:canonical-ear-decomp}
    that the smallest nonsolid matching covered graph is $\overline{C_6}$,
    which is a basic nonsolid brick. 
    For the general case, we adopt as the inductive
    hypothesis that  every nonsolid matching covered  graph with fewer
    edges than  $G$ has  one of  the four basic  nonsolid bricks  as a
    conformal minor.

    \setcounter{cas}{0}
    \begin{cas}
      $G$ contains a proper conformal  subgraph $H$ that is a nonsolid
      matching covered graph
    \end{cas}
    By the induction  hypothesis, $H$ contains a  basic nonsolid brick
    as        a        conformal       minor.         Hence,        by
    Proposition~\ref{prp:transitivity},  $G$  also  contains  a  basic
    nonsolid brick as a conformal minor.

    \smallskip

    Note that this case applies when $G$ has multiple edges.

    \begin{cas}
      Graph $G$ has a nontrivial tight cut $C$.
    \end{cas}
    By Theorem~\ref{thm:solid-mc-graphs},  $G$ has  a $C$-contraction,
    $G_1$,  that  is nonsolid.   By  the  induction hypothesis,  $G_1$
    contains a basic  nonsolid brick as a conformal  minor. Since each
    of  the   basic  nonsolid  bricks   is  cubic,  it   follows  from
    Theorem~\ref{thm:km-cubic} that $G$ also contains a basic nonsolid
    brick as a conformal minor.

    \begin{cas}
      Previous cases do not apply.
    \end{cas}
    The  graph $G$  is free  of nontrivial  tight cuts,  hence $G$  is
    either a brick or a brace.  Every bipartite graph is solid.  Thus,
    $G$ is a brick.  In fact, $G$ is a simple  nonsolid brick, free of
    nonsolid conformal minors. In sum,

    \begin{lem}
      \label{lem:G-R-solid}
      Let $R$ be a nonempty set of  edges of $G$. If $G-R$ is matching
      covered then it is solid.  \qed
    \end{lem}

    We  shall  prove that  $G$  is  one  of  the four  basic  nonsolid
    bricks. If $G$ is the Petersen graph then we are done. We may thus
    assume that $G$ is not the Petersen graph. We shall prove that $G$
    is either $\overline{C_6}$, the bicorn or the tricorn.
    
    As    $G$    is    not    the    Petersen    graph,    then,    by
    Theorem~\ref{thm:3-robust},   $G$   has    3-robust   cuts.    Let
    $C:=\partial(X)$  be a  3-robust cut  of $G$  and let  $M_0$ be  a
    perfect  matching  of  $G$  such   that  $|M_0  \cap  C|=3$.   Let
    $G_1:=\Contra{G}{\overline{X}}{\overline{x}}$                  and
    $G_2:=\Contra{G}{X}{x}$  be   the  two  $C$-contractions   of  $G$
    obtained by contracting $\overline{X}$  and $X$ to single vertices
    $\overline{x}$ and $x$, respectively. As $C$ is robust, the graphs
    $G_1$ and $G_2$ are near-bricks.

    \begin{lem}
      \label{lem:R-rem-both=>tight-solid}
      Let $R$ be a nonempty set of edges  of $G$.  If $G_1-R$  and $G_2-R$ are
      both matching  covered then the  graphs $G_1-R$ and  $G_2-R$ are
      both solid and $C - R$ is tight in $G-R$.
    \end{lem}
    \begin{proof}
      Suppose   that   $G_1-R$   and   $G_2-R$   are   both   matching
      covered. Then, $G-R$  is matching covered, and the  cut $C-R$ is
      separating  in $G-R$.   By Lemma~\ref{lem:G-R-solid},  the graph
      $G-R$ is solid.   Thus, $C - R$ must be  tight in $G-R$.
      Moreover,   by   Theorem~\ref{thm:solid-mc-graphs},  
both   $(C- R)$-contractions of  $G-R$ must  be solid.   That is,
      $G_1-R$ and $G_2-R$ are both solid.
    \end{proof}

    \begin{cor}
      \label{cor:rem=>M0}
      Let $e$  be an  edge of  $G$.  If $G_1-e$  and $G_2-e$  are both
      matching covered  then $e \in  M_0$ and $G_1-e$ and  $G_2-e$ are
      both solid.
    \end{cor}
    \begin{proof}
      Suppose that  the graphs $G_1-e$  and $G_2-e$ are  both matching
      covered.  By  Lemma~\ref{lem:R-rem-both=>tight-solid},  the  cut
      $C-e$ is tight  in $G-e$. Thus, $M_0$ is not  a perfect matching
      of   $G-e$,   hence   $e    \in   M_0$.    Moreover,   also   by
      Lemma~\ref{lem:R-rem-both=>tight-solid}, the  graphs $G_1-e$ and
      $G_2-e$ are both solid.
    \end{proof}

    \begin{lem}
      The graphs $G_1$ and $G_2$ are bricks.
    \end{lem}
    \begin{proof}
      Suppose that $G_1$ is not a brick. As $G_1$ is a near-brick that
      is  not a  brick, it  has nontrivial  tight cuts.   Moreover, by
      Corollary~\ref{cor:near-bricks}, for any tight cut $D$ of $G_1$,
      one  of the  $D$-contractions of  $G_1$ must  be bipartite,  the
      other must be a near-brick.
      \begin{sta}
	\label{sta:one-in-each}
	Let $D$ be a nontrivial tight cut of $G_1$, and let $Y$ be its
	nonbipartite shore. The vertices  $\overline{x}$ and ${y}$ lie
	in     distinct     parts     of    the     bipartition     of
	$H:=\Contra{G_1}{{Y}}{{y}}$.
      \end{sta}
      \begin{proof}
	The graph $H$ is bipartite  and matching covered.  Any part of
	$H$  that   is  disjoint  with  $\{\overline{x},{y}\}$   is  a
	nontrivial barrier of $G$. Thus,  not only is $\overline{x}$ a
	vertex of $H$, but also it lies in the part of $H$ that does not contain
      vertex $y$.
      \end{proof}

      Consider now a  tight cut decomposition of $G_1$.  For any tight
      cut $D$  of $G_1$,  every tight cut  in some  $D$-contraction of
      $G_1$ is also a tight cut  of $G_1$.  Using this observation and
      applying~\eqref{sta:one-in-each}  repeatedly,  we conclude  that
      there exists a nested sequence
      \begin{displaymath}
	X_0 \subset X_1 \subset \cdots \subset X_r = X \quad (r \ge 1)
      \end{displaymath}
      of subsets of $X$ such that the $r$ cuts $\partial(X_i)$, $0 \le
      i < r$ are the tight cuts used in the tight cut decomposition of
      $G_1$,   and,    for   $1    \le   i    \le   r$,    the   graph
      $H_i:=\Contra{(\Contra{G}{X_{i-1}}{x_{i-1}})}
      {\overline{X_i}}{\overline{x_i}}$ is  a brace  of order  four or
      more. Moreover, $G_0:=G/\overline{X_0}$ is a brick.

      \begin{figure}[ht]
  	\psfrag{X}{\hspace{-3pt}$\overline{X_0}$}
	\psfrag{Yb}{\hspace{-3pt}$X_1$}
	\psfrag{v}{$v$}
  	\begin{center}
  	  \epsfig{file=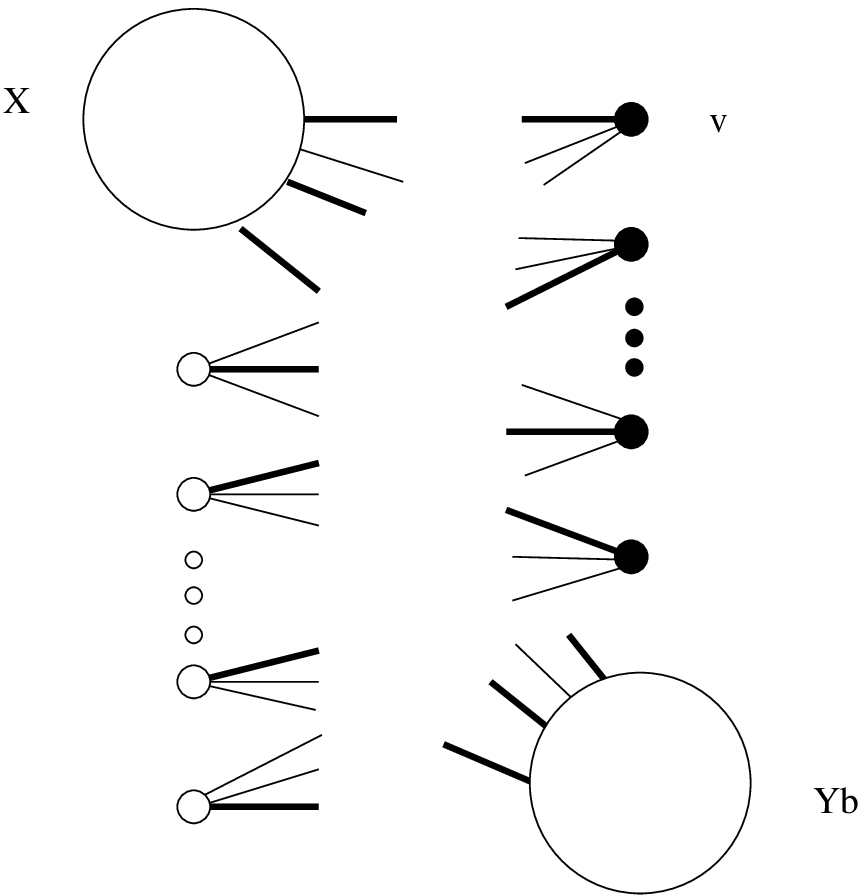, width=.4\textwidth}
  	  \caption{The     brace      $H_1:=\Contra
	    {\Contra{G}{X_0}{x_0}} {\overline{X_1}}{\overline{x_1}}$}
  	  \label{fig:H1}
  	\end{center}
      \end{figure}

      Let  us  analyze   the  situation  in  the   brace  $H_1$.   See
      Figure~\ref{fig:H1}.   By~\eqref{sta:one-in-each}, the  vertices
      $x_{0}$  and   $\overline{x_1}$  lie  in  distinct   parts  of
      $H_1$. Let  $v$ denote a vertex  in the same part  that contains
      $\overline{x_1}$, but distinct from $\overline{x_1}$.  
      Then, no edge incident with $v$ is in $C$.

      Consider first the case in which  $v$ is adjacent to at most one
      vertex of~$X_0$.   As $G$ is  a brick,  then $v$ is  adjacent to
      three or more vertices of  the brace~$H_1$.  Thus, $H_1$ has six
      or more vertices. By  Theorem~\ref{thm:braces}, all the edges of
      $H_1$ are  removable. In particular,  one of the edges  of $H_1$
      incident with  $v$ is not  in $M_0 \cup  \partial(X_{0})$. Thus,
      $G_1$ has  a removable edge that  does not lie in  $M_0 \cup C$.
      This is a contradiction to Corollary~\ref{cor:rem=>M0}.

      Alternatively, suppose that  $v$ is adjacent to  two vertices of
      $X_0$, say,  $w_1$ and $w_2$.   The edges $vw_1$ and  $vw_2$ are
      multiple  edges in  $G/X_0$, hence  removable in  $G_1/X_0$.  At
      least  one of  the  edges $vw_1$  and $vw_2$  is  not in  $M_0$.
      Adjust    notation   so    that   $vw_1    \notin   M_0$.     By
      Lemma~\ref{lem:v0matching},  either $vw_1$  is removable  in the
      brick $G_0$ or $G_0$ has an  edge that is removable and does not
      lie in $M_0  \cup \partial(X_0)$. In both cases, $G_1$  has a removable
      edge  that does  not lie  in $M_0  \cup C$,  a contradiction  to
      Corollary~\ref{cor:rem=>M0}.
 
      In all cases considered, we derived a contradiction. We deduce
      that $G_1$ is a brick. Likewise, a similar argument may be used
      to prove that $G_2$ is also a brick.
    \end{proof}

    \begin{lem}
      $C \subseteq M_0$.
    \end{lem}
    \begin{proof}
      Suppose, to  the contrary,  that $C  - M_0$  contains an
      edge, $e$.  By Corollary~\ref{cor:rem=>M0}, at least  one of the
      graphs  $G_1-e$ and  $G_2-e$  is not  matching covered.   Adjust
      notation so that  $G_1-e$ is not matching covered.  That is, the
      edge   $e$   is  not   removable   in   the  brick   $G_1$.   By
      Lemma~\ref{lem:v0matching},  $G_1$ has  a  removable edge,  $f$,
      that  does  not  lie  in   $M_0  \cup  C$.   Thus,  $G_1-f$  and
      $G_2-f=G_2$  are  both  matching  covered,  a  contradiction  to
      Corollary~\ref{cor:rem=>M0}. Indeed, $C \subseteq M_0$.
    \end{proof}

    \begin{lem}
      \label{lem:H-solid}
      If a $C$-contraction $H$ of $G$ is solid then $H=K_4$.
    \end{lem}
    \begin{proof}
      Adjust  notation  so  that  $G_1$  is  solid.   Assume,  to  the
      contrary, that $G_1 \ne K_4$. The cut $C$ consists only of three
      edges in $M_0$  and $G_1$ is a brick. Thus,  $G_1$ is simple but
      is not a wheel having $\overline{x}$  as a hub.  By the Lemma on
      Wheels, $G_1$ has a removable edge $e$ that does not lie in $M_0
      \cup C$. Thus, $G_1-e$ and $G_2-e=G_2$ are both matching covered
      and   $e    \notin   M_0$.   This   is    a   contradiction   to
      Corollary~\ref{cor:rem=>M0}. Indeed, $G_1=K_4$, as asserted.
    \end{proof}

    \begin{lem}
      \label{lem:H-nonsolid}
      If a $C$-contraction $H$ of $G$ is  not solid then $H$ is one of
      the four basic nonsolid bricks.
    \end{lem}
    \begin{proof}
      Suppose  that a  $C$-contraction of  $G$ is  not solid.   Adjust
      notation so that  $G_1$ is not solid. By induction,  $G_1$ has a
      conformal  minor $J$  that is  one  of the  four basic  nonsolid
      bricks.   Thus, some  bisubdivision $H$  of $J$  is a  conformal
      subgraph of $G_1$.  Assume that $G_1$  is not $J$. As $G_1$ is a
      brick,  if $G_1=H$  then $G_1=J$.  In this  case, the  assertion
      holds.

      We may  thus assume that $H$  is a proper subgraph  of $G_1$. By
      Theorem~\ref{thm:conformal}, $G_1$ has a  removable ear $R$ such
      that  $H$ is  a conformal  subgraph of  $G_1-R$. As  $G_1$ is  a
      brick,  it follows  that the  edges of  $R$ constitute  either a
      removable edge or a removable doubleton.~\footnote{It can be
	shown that $R$ is a singleton, but that is not necessary in
	this argument.} 

      If  $R$  and $C$  are  disjoint  then  $G_1-R$ is  matching  covered and
      nonsolid (by Corollary~\ref{cor:conformal-minor-solidity}),
      and  $G_2-R=G_2$  is  matching  covered.  This  is  a
      contradiction to  Lemma~\ref{lem:R-rem-both=>tight-solid}. Thus,
      $R$ contains an edge, $e$, in $C$. Clearly, $e$ is the only edge
      of $R$  in $C$.  Let  $S$ be a  minimal class of  the dependence
      relation  in $G_2$ induced by edge $e$.
As $G_2$  is a  brick, $G_2-S$  is matching
      covered.   If $e  \in S$  then $G_1-(R  \cup S)$  is $G_1-R$,  a
      nonsolid  matching  covered  graph,  and  $G_2-(R  \cup  S)$  is
      $G_2-S$, a matching covered  graph.  Alternatively, if $e \notin
      S$ then $G_1-S=G_1$ is nonsolid and $G_2-S$ is matching covered.
      In   both   alternatives,   we   derive   a   contradiction   to
      Lemma~\ref{lem:R-rem-both=>tight-solid}. We deduce that $G_1$ is
      one of the four basic nonsolid bricks.
    \end{proof}

    Let us denote the bicorn by $R_8$ and the tricorn by $R_{10}$.  We
    now   know  that   $G_1$   and   $G_2$  are   both   in  the   set
    $\{K_4,\overline{C_6}, R_8, R_{10}, \mathbb{P}\}$. We now begin by
    showing that  in fact  no $C$-contraction of  $G$ is  the Petersen
    graph and no $C$-contraction of $G$ is the tricorn.

    \begin{lem}
      \label{lem:H-not-P-not-R10}
      Neither $G_1$ nor $G_2$ is in $\{\mathbb{P},R_{10}\}$.
    \end{lem}
    \begin{proof}
      Assume, to the contrary, that $G_2$ is the Petersen graph. Every
      one of the  15 edges of $\mathbb{P}$ is removable  and 9 of them
      do     not    lie     in    $M_0 \cup C$,     a    contradiction     to
      Corollary~\ref{cor:rem=>M0}. As asserted, $G_2 \ne \mathbb{P}$.

      Suppose now,  to the  contrary, that $G_2$  is the  tricorn. The
      tricorn has  three removable edges, $e_i$,  $i=1,2,3$. The three
      removable edges of the tricorn lie in a hexagon, $H$, together with
      the edges $f_i$, $i=1,2,3$. See Figure~\ref{fig:tricorn}.

      \begin{figure}[!ht]
  	\centering
  	\psfrag{x}{$x$}
  	\psfrag{e1}{$e_1$}
  	\psfrag{e2}{$e_2$}
  	\psfrag{e3}{$e_3$}
  	\psfrag{f1}{$f_1$}
  	\psfrag{f2}{$f_2$}
  	\psfrag{f3}{$f_3$}
	\includegraphics [width=.4\textwidth] {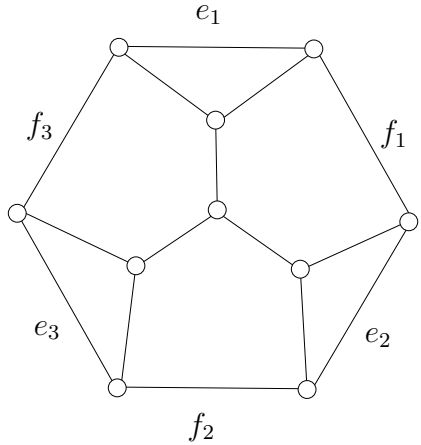}
  	\caption{The three removable edges of the tricorn: $e_1$,
  	  $e_2$ and $e_3$}
  	\label{fig:tricorn}
      \end{figure}

      If $M_0$ does not contain at least one of these three edges then
      again we get a  contradiction to Corollary~\ref{cor:rem=>M0}. We
      may  thus assume  that $\{e_1,e_2,e_3\}  \subset M_0$.   In this
      case, the contraction  vertex $x$ of $G_2$ cannot  be in $V(H)$,
      hence $H$  is an  $M_0$-alternating cycle.  We may  then replace
      $M_0$ by its symmetric difference with $E(H)$.  and again obtain
      a contradiction to Corollary~\ref{cor:rem=>M0}.

      We conclude  that $G_2 \notin \{R_{10},  \mathbb{P}\}$. The same
      conclusion holds for $G_1$.
    \end{proof}

    \begin{lem}
      \label{lem:one-solid}
      At least one $C$-contraction of $G$ is solid.
    \end{lem}
    \begin{proof}
      Assume   the  contrary.   By  Lemma~\ref{lem:H-nonsolid},   both
      $C$-contractions   of  $G$   are  basic   nonsolid  bricks. By
      Lemma~\ref{lem:H-not-P-not-R10}, $G_1$ and $G_2$ are both in
      $\{\overline{C_6},R_8\}$.

      Assume that $G_2=\overline{C_6}$. The brick $\overline{C_6}$ has
      three removable  doubletons $R_i:=\{e_i,f_i\}$,  $i=1,2,3$.  See
      Figure~\ref{fig:C6bar}.

     \begin{figure}[!ht]
  	\centering
  	\psfrag{e1}{$e_1$}
  	\psfrag{e2}{$e_2$}
  	\psfrag{e3}{$e_3$}
  	\psfrag{f3}{$f_3$}
  	\psfrag{f1}{$f_1$}
  	\psfrag{f2}{$f_2$}
  	\includegraphics [width=.4\textwidth] {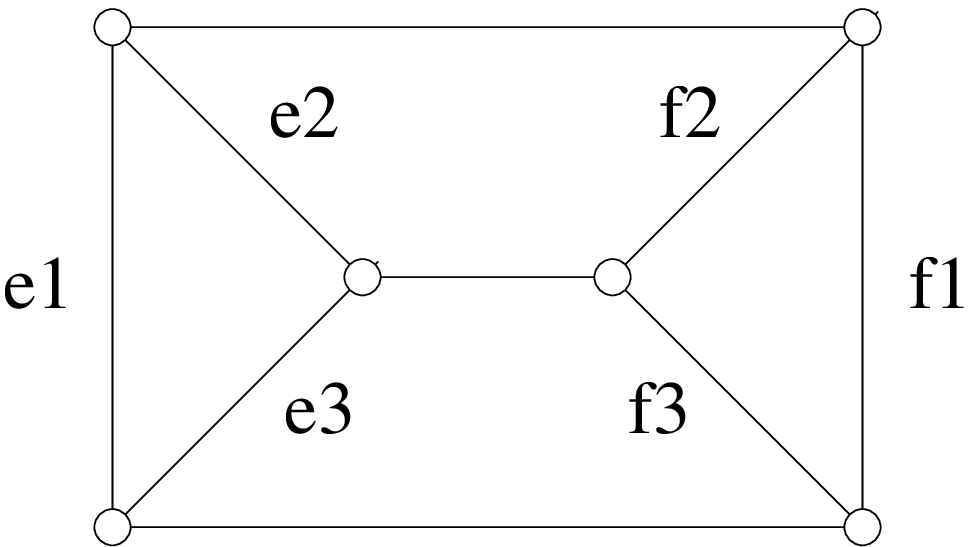}
  	\caption{The three removable doubletons of $\overline{C_6}$:
  	  $\{e_i,f_i\}$, $i=1,2,3$}
  	\label{fig:C6bar}
      \end{figure}

     No vertex of $\overline{C_6}$ is  incident to edges of all three
      doubletons.  Thus, $G_2$ has  a removable doubleton $R$ disjoint
      with $C$, hence $G_1-R=G_1$ is  nonsolid and $G_2-R$ is matching
      covered.        This      is       a      contradiction       to
      Lemma~\ref{lem:R-rem-both=>tight-solid}.

      Alternatively, assume that $G_2=R_8$.  The graph $R_8$ has three
      removable classes, the two doubletons $\{e_i,f_i\}$, $i=1,2$ and
      the  edge  $e$.  See   Figure~\ref{fig:bicorn}.

      \begin{figure}[!ht]
  	\centering
  	\psfrag{x}{$x$}
  	\psfrag{e1}{$e_1$}
  	\psfrag{e2}{$e_2$}
  	\psfrag{e}{$e$}
  	\psfrag{f1}{$f_1$}
  	\psfrag{f2}{$f_2$}
  	\psfrag{Q}{$Q$}
  	\includegraphics [width=.4\textwidth] {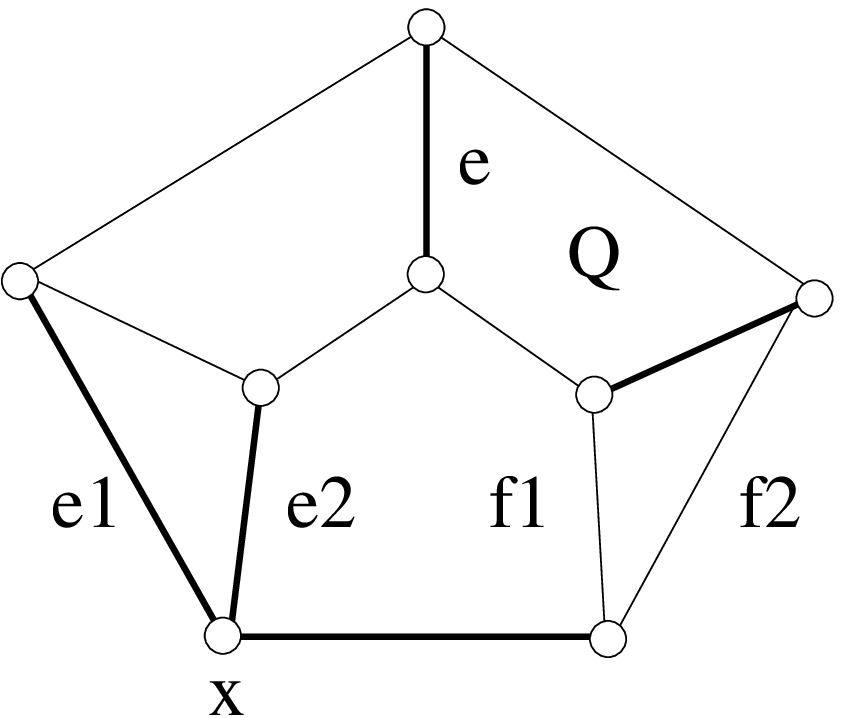}
  	\caption{The three removable classes of $R_8$:
  	  the doubletons $\{e_i,f_i\}$, $i=1,2$ and the edge $e$}
  	\label{fig:bicorn}
      \end{figure}

      The edge $e$  and $\{e_1,f_1\}$ are disjoint. Thus,  $G_2$ has a
      removable  class $R$  disjoint  with $C$,  hence $G_1-R=G_1$  is
      nonsolid   and  $G_2-R$   is  matching   covered.   This   is  a
      contradiction to Lemma~\ref{lem:R-rem-both=>tight-solid}.

      In all cases considered, we derived a contradiction. Indeed, at
      least one $C$-contraction of $G$ is solid.
    \end{proof}

    \bigskip
    If    $G_1$    and    $G_2$    are    both    solid    then,    by
    Lemma~\ref{lem:H-solid}, $G_1$ and $G_2$ are both $K_4$, therefore
    $G$ is $\overline{C_6}$. The assertion  holds in this case. We may
    thus assume  that at  least one  of $G_1$  and $G_2$  is nonsolid.
    Adjust    notation    so    that   $G_2$    is    nonsolid.     By
    Lemma~\ref{lem:one-solid},   the  brick   $G_1$   is  solid.    By
    Lemmas~\ref{lem:H-solid},                     \ref{lem:H-nonsolid}
    and~\ref{lem:H-not-P-not-R10}, the  brick $G_1$  is $K_4$  and the
    brick $G_2$ is either $\overline{C_6}$ or the bicorn.

    If $G_2$ is $\overline{C_6}$ then $G$ is the bicorn. Consider next
    the case in which $G_2$ is the bicorn. If $x$ is not incident with
    the  only  removable edge  $e$  of  $G_2$  but  $e \in  M_0$  (see
    Figure~\ref{fig:bicorn}), then  there is only one  possibility, up
    to  automorphisms.   The edge  $e$  lies  in an  $M_0$-alternating
    quadrilateral  $Q$  and we  may  replace  $M_0$ by  its  symmetric
    difference      with     $E(Q)$,      in     contradiction      to
    Corollary~\ref{cor:rem=>M0}. Thus, $x$  is an end of  $e$. In that
    case, $G$ is the tricorn.

    Indeed,  if $G$  is  not the  Petersen graph  then  $G$ is  either
    $\overline{C_6}$,  the  bicorn  or   the  tricorn.  The  proof  of
    Theorem~\ref{thm:four-nonsolid} is complete.
  \end{proof}

  \section{Equivalence of Problems~\ref{prb:nonsolid} and
    \ref{prb:prism-based}}
  \subsection{Thin and strictly thin edges}
  Motivated by the  problem of recursively generating  bricks, we were
  led to the notion of thin edges. An  edge $e$ of a brick $G$ is {\em
    thin} if the retract of $G-e$ is also a brick.  (Our definition of
  a  thin edge  in  \cite{clm06}  was phrased  in  terms  of sizes  of
  barriers,  but  is  equivalent  to   the  one  given  here.)   Using
  Theorem~\ref{thm:b-invariant-brick}, we  proved in  \cite{clm06} the
  following assertion.

  \begin{thm}\label{thm:thin-brick}
    Every brick distinct from $K_4$, $\overline{C_6}$ and $\mathbb{P}$
    (the Petersen graph) has a thin edge. \qed
  \end{thm}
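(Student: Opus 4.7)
The plan is to leverage Theorem~\ref{thm:b-invariant-brick}, which supplies two $b$-invariant edges in every brick $G\notin\{K_4,\overline{C_6},R_8,\mathbb{P}\}$, and to argue that at least one of those two edges is in fact thin; the bicorn $R_8$ is not covered by that theorem and must be treated separately.

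For $R_8$ I would proceed ad hoc. The bicorn has a unique removable edge $e$ (noted in the paragraph before Theorem~\ref{removable-in-bricks}), and I would verify by direct inspection of this named $8$-vertex graph that $R_8-e$, after the iterated bi-contraction of its two resulting degree-two vertices, is a brick. Since $R_8$ is an explicit, small cubic graph, this is a bounded finite computation.

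For any other brick $G\notin\{K_4,\overline{C_6},R_8,\mathbb{P}\}$, pick two $b$-invariant edges $e_1,e_2$ supplied by Theorem~\ref{thm:b-invariant-brick}. Each $G-e_i$ is a near-brick, hence has exactly one brick in its tight cut decomposition. The edge $e_i$ is thin precisely when this brick coincides with the retract $\widehat{G-e_i}$, i.e.\ when the only nontrivial tight cuts of $G-e_i$ are those peeled away by bi-contracting vertices of degree two (and these are precisely the endpoints of $e_i$ that were cubic in $G$). A nontrivial tight cut of $G-e_i$ that survives the retraction corresponds, via the corollary characterising matching covered graphs by the stable-set property of their barriers, to a nontrivial barrier $B_i$ of $G-e_i$ that is not concentrated at an endpoint of $e_i$.

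The heart of the argument is to show that such $B_1$ and $B_2$ cannot both exist. Since $G$ itself is a brick, its own barriers are singletons (by the ELP Theorem~\ref{thm:elp}), so each $B_i$ must be created by the removal of $e_i$; this forces $e_i$ to bridge two odd components of $(G-e_i)-B_i$ and pins the position of $B_i$ tightly in terms of $e_i$. The main obstacle is a structural case analysis on how $B_1$ and $B_2$ can sit in $G$ relative to $e_1$ and $e_2$, using the $3$-connectivity of $G$ to rule out all configurations. If the direct case analysis becomes unwieldy, a cleaner alternative is induction on $|E(G)|$: delete the $b$-invariant edge $e_1$, identify the unique brick $J$ in the tight cut decomposition of $G-e_1$, invoke the inductive hypothesis to obtain a thin edge of $J$, and then lift it back to a thin edge of $G$, the lifting step being the place where the bookkeeping of barriers is likely to be the most delicate.
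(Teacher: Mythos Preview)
The paper does not prove Theorem~\ref{thm:thin-brick} here; it imports the result from \cite{clm06}, recording only that the argument there rests on Theorem~\ref{thm:b-invariant-brick}. So there is no in-paper proof to compare against in detail. Your overall plan---enter through the two $b$-invariant edges and handle the bicorn separately---is consistent with that hint, and the bicorn case is fine: the retract of $R_8$ minus its unique removable edge is $K_4$ with a doubled edge, hence a brick.

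The genuine gap is the general step. You assert that the barriers $B_1$ and $B_2$ witnessing non-thinness of $e_1$ and $e_2$ cannot coexist, but you offer nothing beyond ``a structural case analysis on how $B_1$ and $B_2$ can sit in $G$''. That is a description of the difficulty, not a resolution of it: nothing in your text rules out a brick in which two $b$-invariant edges each leave behind a barrier of size three or more, and $3$-connectivity of $G$ by itself does not obviously forbid this. (A minor side remark: your parenthetical that the degree-two vertices to be bi-contracted ``are precisely the endpoints of $e_i$ that were cubic in $G$'' is not quite accurate, since a bi-contraction can create new degree-two vertices; but that is not the main issue.) Your fallback inductive route has its own concrete obstacle: even though a thin edge $f$ of the brick $J$ of $G-e_1$ is indeed an edge of $G$, what you know is that the retract of $J-f$ is a brick, whereas what you need is that the retract of $G-f$ is a brick; you provide no mechanism linking these, nor even an argument that $f$ is removable (let alone $b$-invariant) in $G$ itself. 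In short, you have located the right entry point and correctly identified the obstruction, but the proposal stops exactly where the substantive work must begin.
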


  The  above theorem  implies the  following corollary  which has  the
  flavour of Theorem~\ref{thm:ear-decomp}.

  \begin{cor}\label{cor:thin-brick}
    Given any brick $G$, there exists a sequence $(G_1,G_2,\dots,G_r)$
    of bricks such that:
    \begin{enumerate}
    \item[(i)] $G_r=G$ and $G_1 \in \{K_4, \overline{C_6}, \mathbb{P} \}$; and
    \item[(ii)] for $1<i\leq r$, the brick $G_i$ has a thin edge $e_i$
      such that $G_{i-1}$ is the retract of $G_i-e_i$.
    \end{enumerate}
  \end{cor}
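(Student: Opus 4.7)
The plan is to prove the corollary by induction on the number of edges of $G$, using Theorem~\ref{thm:thin-brick} as the engine that produces a thin edge whenever $G$ is not one of the three exceptional bricks.

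For the base case, suppose $G \in \{K_4,\overline{C_6},\mathbb{P}\}$. Then take $r=1$ and $G_1:=G$; condition (i) is satisfied and condition (ii) is vacuous. For the inductive step, assume the conclusion holds for all bricks with fewer edges than $G$, and suppose $G \notin \{K_4,\overline{C_6},\mathbb{P}\}$. By Theorem~\ref{thm:thin-brick}, $G$ has a thin edge $e$. Let $G':=\widehat{G-e}$, the retract of $G-e$. By definition of a thin edge, $G'$ is a brick.

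Next I would verify that $G'$ has strictly fewer edges than $G$, so that the induction hypothesis applies. The graph $G-e$ has one fewer edge than $G$, and each bi-contraction performed in forming the retract replaces two edges incident with a degree-two vertex by a single edge, so the total edge count can only decrease. Thus $|E(G')| \le |E(G-e)| < |E(G)|$, and the induction hypothesis yields a sequence $(G_1, G_2, \dots, G_{r-1})$ of bricks with $G_{r-1}=G'$, $G_1 \in \{K_4,\overline{C_6},\mathbb{P}\}$, and such that for $1 < i \le r-1$ the brick $G_i$ has a thin edge $e_i$ with $G_{i-1}=\widehat{G_i-e_i}$. Appending $G_r:=G$ with thin edge $e_r:=e$ (for which $G_{r-1}=G'=\widehat{G-e}=\widehat{G_r-e_r}$) produces the desired sequence $(G_1,\dots,G_r)$.

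There is essentially no obstacle here, since Theorem~\ref{thm:thin-brick} supplies the thin edge and the fact that the retract of $G-e$ is a brick is built into the definition of thinness. The only subtlety worth spelling out is the strict decrease of edge count needed to legitimize the induction, and this follows directly from the description of retracts via bi-contractions in Section~1 (noting that bi-contracting a degree-two vertex turns two edges into one).
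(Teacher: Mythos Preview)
Your proof is correct and is exactly the straightforward induction on $|E(G)|$ that the paper leaves implicit (the corollary is stated without proof, as an immediate consequence of Theorem~\ref{thm:thin-brick}). One small inaccuracy: in this paper, bi-contracting a degree-two vertex $v_0$ means contracting the set $\{v_0,v_1,v_2\}$ to a single vertex rather than replacing the two edges by one, so a bi-contraction in fact removes at least two edges; this only strengthens your edge-count inequality, and the argument goes through unchanged.
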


  This corollary is the basis  of a recursive procedure for generating
  bricks described  in \cite{clm06}. We  showed that there  exist four
  elementary `expansion  operations' which  can be  used to  build any
  brick   starting   from   one  of   $K_4$,   $\overline{C_6}$,   and
  $\mathbb{P}$.  (The simplest  of these  operations consists  of just
  adding  an edge  joining  two  distinct vertices  of  a given  brick
  $G$.  The  other  three  involve bi-splitting  vertices  and  adding
  edges.)

  \smallskip

  We  associate with  each thin  edge a  number called  its index,  as
  defined below.   Let $G$ be a  brick and let  $e$ be a thin  edge of
  $G$.  Then the retract of $G-e$, by definition, is a brick. The {\em
    index} of $e$ is:
  \begin{itemize}
  \item
    {\em zero}, if both ends of $e$ have degree four or more in $G$;
  \item
    {\em one}, if exactly one end of $e$ has degree three in $G$;
  \item
    {\em two}, if both  ends of $e$ have degree three  in $G$ and edge
    $e$ does not lie in a triangle;
  \item
    {\em three}, if both ends of $e$ have degree three in $G$ and edge
    $e$ lies in a triangle.
  \end{itemize}

  Examples of thin edges of indices  one, two, and three are indicated
  by  solid  lines  in  the   three  bricks,  respectively,  shown  in
  Figure~\ref{fig:bricks-indices}.

  \begin{figure}[htb]
    \begin{center}
      \psfrag{a}{$(a)$}
      \psfrag{b}{$(b)$}
      \psfrag{c}{$(c)$}
      \psfrag{e}{$e$}
      \psfrag{f}{$f$}
      \epsfig{file=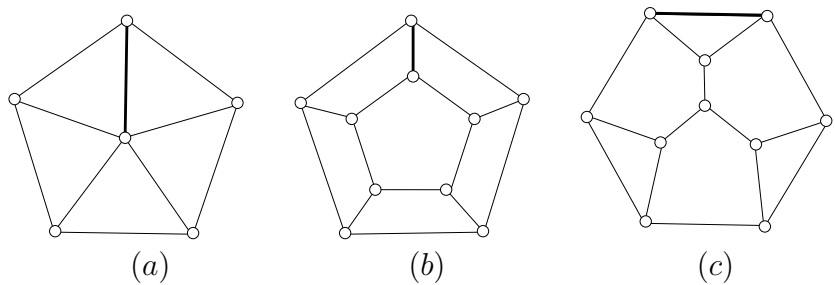, width=.8\textwidth}
    \end{center}
    \caption{(a) The  wheel $W_5$; (b)  the pentagonal prism;  (c) the
      tricorn}
    \label{fig:bricks-indices}
  \end{figure}

The following consequence of Theorem~\ref{thm:km-cubic} will
be useful later.

\begin{prp}\label{prp:inheritance}(\cite{komu16})
Let $G$ be a brick and $e$ be a thin edge of $G$.
For any cubic brick $J$, if the retract of $G-e$ is $J$-based then $G$
is also $J$-based.~\qed
\end{prp}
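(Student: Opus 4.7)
The plan is to use Theorem~\ref{thm:km-cubic} as the main engine, and to bridge between $\widehat{G-e}$ and $G-e$ via the sequence of tight cuts arising from bi-contractions. Since $e$ is a thin edge of the brick $G$, the retract of $G-e$ is a brick, so in particular $G-e$ is matching covered. Repeatedly bi-contracting degree-two vertices yields a sequence of matching covered graphs
\begin{displaymath}
G-e \;=\; H_0,\; H_1,\; \ldots,\; H_t \;=\; \widehat{G-e},
\end{displaymath}
where for each $i$ the graph $H_{i+1}$ is obtained from $H_i$ by bi-contracting some degree-two vertex $v_0$ with neighbours $v_1, v_2$. As recalled in the excerpt, this bi-contraction is precisely the contraction $H_i / X$ with respect to the tight cut $C_i := \partial(X)$, where $X = \{v_0, v_1, v_2\}$; the other $C_i$-contraction $H_i / \overline{X}$ is a brace on four vertices.

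The next step is to inductively transport $J$-freeness across the sequence. For any brick $J$, $J$ is non-bipartite; since every bi-subdivision of $J$ has the same bipartiteness as $J$ (odd-length path replacements preserve cycle parities), $J$ cannot be a conformal minor of a bipartite graph. In particular, each four-vertex brace $H_i / \overline{X}$ is $J$-free. By Theorem~\ref{thm:km-cubic} applied to the cubic brick $J$ and the tight cut $C_i$, the graph $H_i$ is $J$-free if and only if both its $C_i$-contractions are $J$-free, i.e.\ if and only if $H_{i+1}$ is $J$-free. Iterating over $i = 0, 1, \ldots, t-1$, we conclude that $G-e$ is $J$-based if and only if $\widehat{G-e}$ is $J$-based. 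By hypothesis, the latter holds, so some bi-subdivision $H$ of $J$ is a conformal subgraph of $G-e$.

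It remains to check that the same $H$ witnesses that $G$ is $J$-based. Since $H \subseteq G - e \subseteq G$, it suffices to verify that $G - V(H)$ has a perfect matching. Let $M$ be a perfect matching of $(G-e) - V(H)$. If at least one endpoint of $e$ lies in $V(H)$, then $G - V(H) = (G-e) - V(H)$ and $M$ is already a perfect matching of $G - V(H)$. Otherwise, both endpoints of $e$ lie outside $V(H)$, in which case $G - V(H)$ is obtained from $(G-e) - V(H)$ by adding the single edge $e$, and $M$ remains a perfect matching. In either case $H$ is a conformal subgraph of $G$, so $J$ is a conformal minor of $G$, as required.

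There is no substantial obstacle: the whole argument is a direct application of Theorem~\ref{thm:km-cubic} combined with the elementary observation that bi-subdivisions preserve bipartiteness (which disposes of the brace side of each tight cut) and with the trivial lift from $G-e$ to $G$. The one point that requires a moment's care is the verification that the bi-subdivision $H$ of $J$ remains conformal after restoring $e$, which is handled by the short case analysis above.
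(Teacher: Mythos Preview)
Your argument is correct and is precisely the approach the paper points to: the paper states the proposition as a direct consequence of Theorem~\ref{thm:km-cubic} (citing \cite{komu16}) without spelling out a proof, and what you have written is exactly the natural way to cash that out, iterating Theorem~\ref{thm:km-cubic} along the chain of tight cuts coming from the bi-contractions and then lifting from $G-e$ to $G$.
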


  \smallskip

  In  order to  establish recursive  procedures for  generating simple
  bricks, one needs the notion of a strictly thin edge. An edge $e$ of
  a simple brick $G$ is {\em strictly thin} if $e$ is thin and the retract of
  $G-e$ is  simple.  There are  five infinite families of bricks that
are free of strictly thin edges; these are
  (i)  odd wheels,  (ii)  prisms of  order $2 (\mbox{modulo }4)$,  (iii)
  M\"obius ladders of order  $0 (\mbox{modulo }4)$, (iv) staircases, and
  (v) truncated  biwheels. We refer  to bricks in these  five families
  together   with   the   Petersen   graph   as   {\em   Norine-Thomas
    bricks}. (Note  that $K_4$ is  the M\"obius ladder of  order four,
  and $\overline{C_6}$  is the  prism of order  six.) For  brevity, we
  shall denote the family of all Norine-Thomas bricks by ${\cal NT}$.

  \smallskip

  Norine  and  Thomas  established   the  following  strengthening  of
  Theorem~\ref{thm:thin-brick}.
  \begin{thm}\label{thm:strictly-thin-brick}(\cite{noth07})
    Every simple  brick $G$ which is  not a Norine-Thomas brick  has a
    strictly thin edge.\qed
  \end{thm}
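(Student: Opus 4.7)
The plan is to argue by contradiction: assume $G$ is a simple brick with $G \notin {\cal NT}$ and that $G$ has no strictly thin edge, and then show that the failure of strict thinness forces enough local rigidity for $G$ to be one of the excluded Norine--Thomas bricks. The starting point is Theorem~\ref{thm:thin-brick}: since the three exceptions $K_4, \overline{C_6}, \mathbb{P}$ already lie in ${\cal NT}$, we may assume that $G$ does possess thin edges. Fix any thin edge $e=uv$; by hypothesis $e$ is not strictly thin, so the retract $\widehat{G-e}$ has multiple edges.

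The next step is to localise where such multiplicities can come from. The only new degree-two vertices of $G-e$ are those ends of $e$ that have degree three in $G$, and a bi-contraction creates a parallel edge only when the two non-$e$ neighbours of the contracted vertex are already adjacent (or become adjacent after a chain of further bi-contractions). This allows a sharp classification according to the index of $e$ defined just before Figure~\ref{fig:bricks-indices}. For index~$0$ (both ends of degree $\ge 4$) no degree-two vertices are produced and the retract is already simple, contradicting the hypothesis; so every thin edge of $G$ must have index $1$, $2$ or $3$. For index~$3$, the triangle at $e$ together with a forced collision of two neighbours after contraction pins down a local prism/Möbius pattern around $e$; for index~$2$, a longer alternating chain of degree-three vertices must collapse, seeding a ladder/staircase rung; for index~$1$, the folding must terminate in an odd-wheel hub or a truncated-biwheel hub.

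The main body of the proof then globalises this local information. I would iterate the analysis over all thin edges of $G$: each such edge, being non-strict, implants a small forbidden pattern, and the $3$-connectivity plus barrier-freeness of bricks (Theorem~\ref{thm:elp}) together with the iterated existence of thin edges in every brick-minor along the way (via Corollary~\ref{cor:thin-brick} and Theorem~\ref{thm:b-invariant-brick}) force these patterns to chain together coherently. A careful extremal argument on the set of degree-three vertices, combined with the constraint that the multiplication always arises from the same `rung-collapsing' mechanism, shows that the resulting global structure closes up into one of (i) an odd wheel, (ii) a prism of order $\equiv 2 \pmod 4$, (iii) a Möbius ladder of order $\equiv 0 \pmod 4$, (iv) a staircase, (v) a truncated biwheel, or (vi) the Petersen graph $\mathbb P$ (the last arising as the unique non-infinite-family obstruction because every edge of $\mathbb P$ is removable yet none is $b$-invariant, by Theorem~\ref{thm:b-invariant-brick}).

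The hard part will be the index-$2$ and index-$3$ analyses. These are the configurations in which bi-contractions can propagate the creation of parallel edges along an entire alternating cycle, and they are precisely the mechanism behind the rigidity of prisms, Möbius ladders, and staircases. The combinatorial bookkeeping needed to show that such propagation must close into one of the named families, rather than eventually terminating at a strictly thin edge, is the crux of the Norine--Thomas argument and the reason the theorem is nontrivial; my plan is to handle these cases by extremal choice of a \emph{shortest} propagating chain and to use $3$-connectivity to show that any attempt to deviate from the rung/rim pattern exposes a strictly thin edge, contradicting the standing assumption.
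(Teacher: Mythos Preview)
The paper does not prove this theorem. It is stated with a terminal \qed{} and attributed to Norine and Thomas~\cite{noth07}; the surrounding text explicitly says their work ``is independent of our work and uses entirely different methods,'' and that the present authors' own derivation of Theorem~\ref{thm:strictly-thin-brick} from Theorem~\ref{thm:thin-brick} ``appears in an unpublished report~\cite{clm08}.'' So there is nothing in this paper to compare your attempt against.

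That said, your overall strategy---start from Theorem~\ref{thm:thin-brick}, assume every thin edge fails to be strictly thin, note that an index-$0$ thin edge is automatically strictly thin (since no bi-contractions occur and $G-e$ is already simple), and then analyse the local collapse patterns for indices $1,2,3$---is exactly the route the authors allude to when they say the Norine--Thomas theorem can be derived from their thin-edge theorem. What you have written, however, is a plan rather than a proof: the passages about ``forcing these patterns to chain together coherently'' and the ``extremal argument on the set of degree-three vertices'' are placeholders for the substantial case analysis that actually does the work, and your own final paragraph acknowledges this. In particular, the claim that index-$3$ ``pins down a local prism/M\"obius pattern'' and that index-$1$ terminates ``in an odd-wheel hub or a truncated-biwheel hub'' would each need a genuine argument; nothing in the present paper supplies those details, and you should not expect to reconstruct the Norine--Thomas proof (or the unpublished CLM derivation) from what is here.
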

  The work  of Norine and Thomas  is independent of our  work and uses
  entirely different  methods.  After learning about  the statement of
  their result,  we were able  to show that  it is possible  to derive
  Theorem~\ref{thm:strictly-thin-brick}            from            our
  Theorem~\ref{thm:thin-brick}.   Our  proof  appears  in an unpublished
  report \cite{clm08}. As an immediate consequence of the above theorem, we have:

  \begin{cor}\label{cor:strictly-thin-brick}
    Given any  simple brick  $G$,
    there  exists a  sequence $$(G_1,G_2,\dots,G_r)$$  of simple  bricks
    such that:
    \begin{enumerate}
    \item[(i)] $G_r=G$ and $G_1$ is in ${\cal NT}$; and
    \item[(ii)] for $1<i\leq  r$, the brick $G_i$ has  a strictly thin
      edge $e_i$ such that $G_{i-1}$ is the retract of $G_i-e_i$.\qed
    \end{enumerate}
  \end{cor}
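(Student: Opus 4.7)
The plan is to prove Corollary~\ref{cor:strictly-thin-brick} by a straightforward induction, using Theorem~\ref{thm:strictly-thin-brick} as the engine that drives each inductive step. The parameter I would induct on is the number of edges (equivalently, the size of the vertex set) of the simple brick $G$.

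For the base case, suppose $G \in {\cal NT}$. Then the singleton sequence $(G_1):=(G)$ with $r=1$ trivially satisfies~(i) and vacuously satisfies~(ii), so we are done.

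For the inductive step, assume the conclusion holds for every simple brick with fewer edges than $G$, and suppose $G \notin {\cal NT}$. By Theorem~\ref{thm:strictly-thin-brick}, $G$ has a strictly thin edge $e$. By definition of strictly thin, the retract $G':=\widehat{G-e}$ is a simple brick. Since the retract is obtained from $G-e$ by a (possibly empty) sequence of bi-contractions, each of which strictly decreases the number of edges, $G'$ has fewer edges than $G$. The induction hypothesis then supplies a sequence $(G_1,\ldots,G_{r-1})$ of simple bricks with $G_{r-1}=G'$, $G_1 \in {\cal NT}$, and each $G_{i-1}$ arising as the retract of $G_i - e_i$ for some strictly thin edge $e_i$ of $G_i$. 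Setting $G_r:=G$ and $e_r:=e$, the extended sequence $(G_1,\ldots,G_{r-1},G_r)$ meets both requirements of the corollary.

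There is no serious obstacle here: the result truly is immediate once Theorem~\ref{thm:strictly-thin-brick} is in hand, and the only thing to check is that the recursion terminates, which follows from the edge-count argument above. One minor bookkeeping point I would make explicit is that ``retract of $G-e$'' is well-defined (by the cited Proposition 3.11 of~\cite{clm05}) and that it preserves simplicity when $e$ is strictly thin (by the very definition of strictly thin), so each $G_i$ in the sequence is indeed a simple brick, as required.
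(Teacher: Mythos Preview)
Your proof is correct and is exactly the standard induction that the paper has in mind: the paper states the corollary as ``an immediate consequence'' of Theorem~\ref{thm:strictly-thin-brick} and gives no further argument, so your write-up simply spells out that immediacy. One tiny quibble: edge-count and vertex-count are not interchangeable here (if $e$ has index zero the retract of $G-e$ is $G-e$ itself, with the same number of vertices), so stick with induction on $|E|$ as you in fact do.
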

  \smallskip

  In the same paper \cite{noth07}, Norine and  Thomas have also proved the following
  powerful generalization of Theorem~\ref{thm:strictly-thin-brick}; it
  belongs to a  class of theorems in structural graph  theory known as
  `splitter  theorems'.   To state  this  generalization,  we need  to
  define a  new class of  graphs.  The graph $T_{2k}^+$ is obtained from
  the truncated  biwheel $T_{2k}$ by  joining its hubs.   The extended
  Norine-Thomas family ${\cal  NT}^+$ is the union of  ${\cal NT}$ and
  $\{T^+_{2k}:~k\in\zz,k\geq 3\}$.

  \begin{thm}\label{thm:splitter-brick}
    Let $G$ be a  simple brick which is not in ${\cal NT}^+$
and let $J$ be a simple brick that is distinct from $K_4$ and $\overline{C_6}$.
If $J$ is a matching minor of $G$ then
there there  exists a sequence  $G_1,G_2,\dots,G_r$ of
    simple bricks such that:
    \begin{enumerate}
    \item[(i)] $G_r=G$ and $G_1=J$, and
    \item[(ii)] for $1 <  i \leq r$, the brick $G_i$ has  a strictly thin
      edge $e_i$ such that $G_{i-1}$ is the retract of $G_i-e_i$.
    \end{enumerate}
  \end{thm}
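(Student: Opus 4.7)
The plan is to prove Theorem~\ref{thm:splitter-brick} by induction on $|E(G)| - |E(J)|$, using Theorem~\ref{thm:strictly-thin-brick} as the ``one-step'' engine and Corollary~\ref{cor:two-types-of-minors}-style reasoning to keep track of $J$ as a matching minor. The base case is $|E(G)| = |E(J)|$, which (since $J$ is a matching minor of $G$ and both are simple bricks with $G$ containing a bi-subdivision of $J$ as a matching-minor ``model'') forces $G = J$, and the singleton sequence $(J)$ works.

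For the inductive step, first I would fix a matching-minor model $H$ of $J$ inside $G$ (so $H$ is a conformal subgraph of $G$ and bi-contracting all of $H$ yields $J$). The goal is to produce a strictly thin edge $e$ of $G$ such that (a) $J$ is still a matching minor of the retract $\widehat{G-e}$, and (b) either $\widehat{G-e} = J$ or the pair $(\widehat{G-e}, J)$ satisfies the inductive hypothesis; then prepending $e$ to the sequence obtained by induction on $\widehat{G-e}$ gives the desired sequence. Since $G \notin {\cal NT}^+$ we have in particular $G \notin {\cal NT}$, so by Theorem~\ref{thm:strictly-thin-brick} $G$ has at least one strictly thin edge. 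If some strictly thin edge $e$ is disjoint from $V(H) \setminus V_H^{\text{int}}$ (roughly, disjoint from the ``essential'' part of $H$) or, more carefully, lies in $E(G) \setminus E(H)$ and does not destroy the conformality of $H$ in $\widehat{G-e}$, then $H$ (or a bi-subdivision of $H$ with even-length corrections) survives as a matching-minor model of $J$ in $\widehat{G-e}$, and we are done by induction, using Proposition~\ref{prp:inheritance} in the cubic case to propagate $J$-basedness.

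The main obstacle — and the heart of the splitter theorem — is the case in which every strictly thin edge of $G$ lies in the model $H$ in a way that makes its removal collapse $J$. Here I would argue that, under this obstruction, $G$ must be one of the explicitly listed extremal configurations: by Theorem~\ref{thm:b-invariant-brick} together with a careful barrier/doubleton analysis akin to Theorem~\ref{thm:eq-classes-in-bricks}, the local structure forced at each end of every strictly thin edge (degree, triangle membership, bi-split history) is so restricted that $G$ must be a Norine--Thomas brick or the ``plus one edge between hubs'' variant $T_{2k}^+$, i.e.\ $G \in {\cal NT}^+$ — contradicting the hypothesis $G \notin {\cal NT}^+$. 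The requirement $J \neq K_4, \overline{C_6}$ enters precisely at this step: these two bricks are the only ones without removable edges (Theorem~\ref{removable-in-bricks}), and they appear as ``sinks'' that can block the extension argument, so excluding them as targets $J$ is essential for the contradiction to go through. A secondary, technical obstacle is to confirm that when induction is applied to $\widehat{G-e}$ we still have $\widehat{G-e} \notin {\cal NT}^+$ (or else that $\widehat{G-e} = J$); this would be handled by a short case check showing that if the retract of $G-e$ for a strictly thin $e$ lands in ${\cal NT}^+$ while still properly containing $J$, then $G$ itself was in ${\cal NT}^+$.
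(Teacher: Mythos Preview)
The paper does not contain a proof of Theorem~\ref{thm:splitter-brick}. This result is quoted from Norine and Thomas~\cite{noth07} and used as a black box; the paper explicitly introduces it as ``the following powerful generalization of Theorem~\ref{thm:strictly-thin-brick}'' that ``Norine and Thomas have also proved''. So there is no in-paper proof to compare your proposal against.

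That said, your outline identifies the right overall shape --- induction on $|E(G)|-|E(J)|$, with Theorem~\ref{thm:strictly-thin-brick} supplying a strictly thin edge at each step --- but it does not actually carry out the hard part. The entire content of a splitter theorem is the claim that some strictly thin edge can be deleted \emph{while preserving $J$ as a matching minor of the retract}; you acknowledge this as ``the heart of the splitter theorem'' and then defer it to ``a careful barrier/doubleton analysis akin to Theorem~\ref{thm:eq-classes-in-bricks}''. That is not a proof; it is a restatement of what needs to be proved. The actual argument in~\cite{noth07} is substantial and does not reduce to the tools available in this paper.

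Your handling of the ``secondary, technical obstacle'' is also incorrect as stated. You claim that if $\widehat{G-e}\in{\cal NT}^+$ while $J$ is still a proper matching minor of it, then $G\in{\cal NT}^+$. This is false in general: adding a single edge to a member of ${\cal NT}^+$ typically yields a brick outside ${\cal NT}^+$ (indeed, that is exactly how $T_{2k}^+$ arises from $T_{2k}$, and one more edge would leave ${\cal NT}^+$ entirely). The correct treatment must instead show that in this situation one can choose a \emph{different} strictly thin edge, or else build the remaining sequence by hand inside the explicit family ${\cal NT}^+$; neither is addressed. In short, your proposal is a plausible plan but not a proof, and since the paper itself offers no proof, there is nothing here to validate it against.
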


  Since any cubic  brick which is a  conformal minor of $G$  is also a
  matching minor  of $G$, the above  theorem is applicable to the case in
  which $J$ is a cubic brick, distinct from $K_4$ and $\overline{C_6}$, that happens to be
  conformal minor of $G$.

  \subsection{Proof of the equivalence}

Let us recall that every solid brick is $\overline{C_6}$-free. The only simple planar
solid bricks are the odd wheels. The only simple planar $\overline{C_6}$-free bricks
are staircases of order 0 (modulo 4), the tricorn, and of course the odd wheels.
We now show that apart from the staircases of order 0 (modulo 4), the tricorn,
and the ubiquitous Petersen graph, a simple brick is solid if and only if it is
$\overline{C_6}$-free. It suffices to prove the following:

  \begin{thm}\label{thm:equivalence}
    Any simple nonplanar nonsolid brick $G$, distinct from the Petersen
    graph, is $\overline{C_6}$-based.
  \end{thm}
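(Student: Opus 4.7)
The plan is to combine the Main Theorem with the Norine--Thomas splitter theorem (Theorem~\ref{thm:splitter-brick}). By Theorem~\ref{thm:four-nonsolid}, $G$ contains as a conformal minor one of the four basic nonsolid bricks: $\overline{C_6}$, the bicorn $R_8$, the tricorn $R_{10}$, or the Petersen graph $\mathbb{P}$. If this conformal minor is $\overline{C_6}$ we are done, so suppose it is some $J\in\{R_8,R_{10},\mathbb{P}\}$. Since $R_8$ and $R_{10}$ are staircases and hence planar while $G$ is nonplanar, and since $G\neq\mathbb{P}$ by hypothesis, in each case $J$ is a \emph{proper} conformal minor of $G$.

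Next I would verify that the splitter theorem is applicable, i.e.\ that $G\notin\mathcal{NT}^+$. Among the Norine--Thomas bricks, odd wheels and M\"obius ladders of order $0\pmod 4$ are odd-intercyclic and hence solid (by Theorem~\ref{thm:usp}), so nonsolidity of $G$ excludes these; prisms, truncated biwheels, and staircases are planar, so nonplanarity of $G$ excludes these; and $G\neq\mathbb{P}$ by hypothesis. The only remaining family is $\{T_{2k}^+:k\geq 3\}$, which I would dispatch separately by exhibiting $\overline{C_6}$ directly as a conformal minor of $T_{2k}^+$: the newly added hub-hub edge together with a short portion of the rim path readily forms a bi-subdivision of the triangular prism.

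With $G\notin\mathcal{NT}^+$ and $J$ a proper conformal minor, Theorem~\ref{thm:splitter-brick} yields a sequence of simple bricks $J=G_1,G_2,\dots,G_r=G$ (with $r\geq 2$) in which each $G_{i-1}$ is the retract of $G_i-e_i$ for a strictly thin edge $e_i$ of $G_i$. The crux is then to establish the following \emph{dead-end claim}: for every $J\in\{R_8,R_{10},\mathbb{P}\}$ and for every simple brick $G_2$ with a strictly thin edge $e_2$ whose retract of $G_2-e_2$ is $J$, the brick $G_2$ contains $\overline{C_6}$ as a conformal minor. Granted the dead-end claim, Proposition~\ref{prp:inheritance} (applied with the cubic brick $\overline{C_6}$) propagates the minor upward along the sequence: if $G_{i-1}$ is $\overline{C_6}$-based, then so is $G_i$. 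Hence $G=G_r$ is $\overline{C_6}$-based, as desired.

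The principal obstacle is the finite case analysis underlying the dead-end claim. Because each of $R_8,R_{10},\mathbb{P}$ is cubic, the strictly thin edge $e_2$ of $G_2$ has index $0$, $1$, $2$, or $3$, and the corresponding expansion from $J$ either simply adds a new edge between two vertices of $J$ (index~$0$) or first bi-splits one or two vertices of $J$ and then adds $e_2$ between the resulting outer vertices (indices~$1$, $2$, and $3$, with the two bi-split vertices sharing a neighbour in the index-$3$ case). Modulo the automorphism groups of the three base bricks the number of inequivalent expansions is small and can be enumerated by hand. For $J=\mathbb{P}$ with an index-$0$ expansion the conformal minor $\overline{C_6}$ of $\mathbb{P}+e_2$ is already known (see Figure~\ref{fig:minors-of-P-and-P+e}(b)); for the remaining expansions I would exhibit a bi-subdivision of $\overline{C_6}$ in $G_2$ by routing a suitable hexagon through the new edge $e_2$ together with three chords that provide the prism structure, and verify that the complement of the selected vertex set admits a perfect matching.
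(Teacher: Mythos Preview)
Your approach matches the paper's: combine the Main Theorem with the Norine--Thomas splitter theorem, establish a dead-end claim for $G_2$, and propagate via Proposition~\ref{prp:inheritance}. Two points where the paper is tighter. First, because each $J\in\{R_8,R_{10},\mathbb{P}\}$ is cubic, any graph whose retract is $J$ is a bi-subdivision of $J$; since $G_2$ is a simple brick, the only possible degree-two vertices of $G_2-e_2$ are the ends of $e_2$, and this forces $G_2-e_2=J$ (otherwise two new subdivision vertices would both have to be ends of $e_2$, creating a multiple edge). Hence $e_2$ has index~$0$ and $G_2=J+e_2$, so the dead-end check collapses to verifying that $J+e$ is $\overline{C_6}$-based for every non-edge $e$ of $J$---no bi-splittings need be considered. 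Second, the graphs $T_{2k}^+$ are planar (the only nonplanar members of $\mathcal{NT}^+$ are the M\"obius ladders of order $0\pmod 4$ and $\mathbb{P}$), so your separate treatment of them is unnecessary; odd wheels are likewise excluded by nonplanarity alone. Minor slip: the tricorn $R_{10}$ is planar but is not a staircase.
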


\begin{proof} 
    As the only nonplanar members of  the family ${\cal NT}^+$ are the
    M\"obius  ladders of  order $0 (\mbox{modulo }4)$  and the  Petersen
    graph,   it    follows   that   $G\not\in   {\cal    NT}^+$.    Also, by
    Theorem~\ref{thm:four-nonsolid},  $G$  contains  one of  the  four
    basic  nonsolid bricks  as  a conformal  minor.   To complete  the
    proof, it suffices to show that  if $G$ has either the bicorn, the
    tricorn, or the Petersen graph as  a conformal minor, then it also
    has $\overline{C_6}$ as a conformal  minor.  Towards this end, let
    $J$    be    one    of    the    above-mentioned    bricks (that is, bicorn,
tricorn or the Petersen graph) such that $J$ is a conformal minor of $G$.     
    
    By
    Theorem~\ref{thm:splitter-brick},   there    exists   a   sequence
    $G_1,G_2,...,G_r$  of  simple bricks  such  that  (i) $G_r=G$  and
    $G_1=J$, and (ii) for $1 < i \leq r$,  the brick $G_i$ has a strictly
    thin  edge   $e_i$  such   that  $G_{i-1}$   is  the   retract  of
    $G_i-e_i$.   In   particular,   $G_1=J$    is   the   retract   of
    $G_{2}-e_{2}$. Since  $J$ is  a cubic  brick, it  follows that
    $e_{2}$ is a  strictly thin edge of index zero  and, hence, that
    $J=G_{2}-e_{2}$. In other words, $G_{2}$ is obtained from $J$
    by joining  two nonadjacent vertices  by an edge.
    By Proposition~\ref{prp:inheritance}, every cubic brick that is a conformal
    minor of $G_2$ is also a conformal minor of $G$. Thus, in order to  complete the
    proof, all we  need to do is to show that any brick obtained from
    either the bicorn,  the tricorn, or the Petersen  graph contains a
    bi-subdivision of $\overline{C_6}$ as  a conformal subgraph.  This
    is routine.
  \end{proof}

  \bibliographystyle{abbrv}

  \bibliography{clkm17}

\end{document}